\definecolor{lightgray}{gray}{.85}
\title{\vskip-2em Model~Order~Reduction \\ for \\ Gas~and~Energy~Networks}
\author{%
Christian Himpe\thanks{ORCiD: \href{http://orcid.org/0000-0003-2194-6754}{0000-0003-2194-6754}, Contact: \href{mailto:himpe@mpi-magdeburg.mpg.de}{\nolinkurl{himpe@mpi-magdeburg.mpg.de}},
Computational Methods in Systems and Control Theory Group at the Max Planck Institute for Dynamics of Complex Technical Systems, Sandtorstra{\ss}e~1, D-39106 Magdeburg, Germany}~\,$^{\textrm{\Letter}}$~~~%
Sara Grundel\thanks{ORCiD: \href{http://orcid.org/0000-0002-0209-6566}{0000-0002-0209-6566}, Contact: \href{mailto:grundel@mpi-magdeburg.mpg.de}{\nolinkurl{grundel@mpi-magdeburg.mpg.de}},
Computational Methods in Systems and Control Theory Group at the Max Planck Institute for Dynamics of Complex Technical Systems, Sandtorstra{\ss}e~1, D-39106 Magdeburg, Germany}~~~
Peter Benner\thanks{ORCiD: \href{http://orcid.org/0000-0003-3362-4103}{0000-0003-3362-4103}, Contact: \href{mailto:benner@mpi-magdeburg.mpg.de}{\nolinkurl{benner@mpi-magdeburg.mpg.de}},
Computational Methods in Systems and Control Theory Group at the Max Planck Institute for Dynamics of Complex Technical Systems, Sandtorstra{\ss}e~1, D-39106 Magdeburg, Germany}
}
\date{}
\DeclareMathOperator{\R}{\mathbb{R}}
\DeclareMathOperator{\D}{\mathrm{d}\!}
\DeclareMathOperator{\diag}{diag}
\DeclareMathOperator{\tr}{tr}
\DeclareMathOperator{\svd}{SVD}
\DeclareMathOperator{\evd}{EVD}
\DeclareMathOperator{\tsvd}{tSVD}
\DeclareMathOperator{\tevd}{tEVD}
\DeclareMathOperator{\id}{id}
\newcommand{\splus}{{\scriptstyle+}}
\newcommand{\T}{\mathsf{\scriptscriptstyle T}}
\newcommand{\orelse}{\textbar\,\,}
\newcommand{\morgen}{\texttt{morgen}}
\newtheoremstyle{thm}{\topsep}{\topsep}{\normalfont \itshape}{}{\normalfont \bfseries}{}{\newline}{}
\theoremstyle{thm}
\newtheorem{proposition}{Proposition}
\def\@cite#1#2{[{#1\if@tempswa ,~#2\fi}]}
\begin{document}

\maketitle

{\textbf{Abstract}:\\
To counter the volatile nature of renewable energy sources, gas networks take a vital role.
But, to ensure fulfillment of contracts under these circumstances,
a vast number of possible scenarios, incorporating uncertain supply and demand, has to be simulated ahead of time.
This many-query gas network simulation task can be accelerated by model reduction,
yet, large-scale, nonlinear, parametric, hyperbolic partial differential(-algebraic) equation systems, modeling natural gas transport,
are a challenging application for model order reduction algorithms.

For this industrial application, we bring together the scientific computing topics of:
mathematical modeling of gas transport networks,
numerical simulation of hyperbolic partial differential equation,
and parametric model reduction for nonlinear systems.
This research resulted in the \morgen{} (Model Order Reduction for Gas and Energy Networks) software platform,
which enables modular testing of various combinations of models, solvers, and model reduction methods.
In this work we present the theoretical background on systemic modeling and structured, data-driven, system-theoretic model reduction for gas networks,
as well as the implementation of \morgen{} and associated numerical experiments testing model reduction adapted to gas network models.
}

\vskip1em

\textbf{Keywords:} Digital Twin, Gas Network, Model Reduction, Empirical Gramians

\begin{table}[h]
\begin{tabular}{rl|rl}
 $p$ & Pressure &  $N_s$ & Number of supply nodes \\
 $q$ & Mass-flux & $N_0$ & Number of internal nodes \\
 $\bar{p}$ & Steady-state pressure  & $N_d$ & Number of demand nodes \\
 $\bar{q}$ & Steady-state mass-flux & $N_c$ & Number of compressors \\
 $s_p$ & Supply node pressure  & $N_p$ & Dimension of pressure space \\
 $d_q$ & Demand node mass-flux & $N_q$ & Dimension of mass-flux space \\
 $s_q$ & Supply node mass-flux & $n_p$ & Dimension of reduced pressure space \\
 $d_p$ & Demand node pressure  & $n_q$ & Dimension of reduced mass-flux space
\end{tabular}
\caption{List of recurring symbols.}
\end{table}

\pagebreak

\section{Introduction} 
%
Rapid transient simulations of gas flow in pipeline networks are essential for safe operations of gas networks as well as reliable delivery of denominations.
Yet, in a volatile supply and demand environment, due to increasing renewable energy sources,
the time horizon for planning dispatch and load forecasting shortens while more sources of accountable uncertainties,
such as effects of weather on energy consumption and production are introduced; to a lesser degree this is a long standing challenge~\cite{Ash93}.
An example is the interconnection of gas and power grids through gas-fired power plants~\cite{CheFBetal15,ZloCB15,MakVZetal16}.
Thus, more simulations for the uncertainty quantification of dynamic gas network behavior need to be completed in less time by the gas grid operators.
However, available compute power is (and was~\cite{VosZ93}) never sufficient.
To this end we evaluate customized model reduction techniques for an established class of gas network models.

This work and the associated software platform is an effort to determine which model reduction methods are suitable for enabling \emph{digital twin}s~\cite{LuGAetal19,Sar19,morHarHW18} of gas networks.
Depending on the mathematical model and quantities of interest, the twin may contain redundant or superfluous information with respect to the simulations.
Therefore, model reduction compresses the twin to a matched surrogate model, which is sufficiently accurate in the chosen operating region.

The swift numerical simulation of gas network twins by reduced order modeling is highly relevant,
not only due to the transition towards renewables at the time of writing,
which is underlined by the research projects \emph{MathEnergy}\footnote{\url{https://mathenergy.de}} (Mathematical Key Technologies for Evolving Energy Grids)~\cite{CleBBetal21} that the authors are part of,
and \emph{TRR154}\footnote{\url{https://trr154.fau.de}} (Mathematical Modelling, Simulation and Optimization using the Example of Gas Networks)~\cite{LanLMetal15},
but also because of the intriguing numerical problem of model reduction for hyperbolic, nonlinear, coupled, parametric, multiscale partial differential-algebraic equation systems.



If relevant intraday demand changes occur, established steady / stationary / static simulations may not be sufficient anymore~\cite{FarR16}. 
The basic model for the simulation of unsteady / dynamic / transient flow processes in gas pipelines is based on the one-dimensional (isothermal) Euler equations,
originally introduced in~\cite{Guy67},
and popularized in~\cite{Osi84} as well as in~\cite{KraSVetal84a,KraSVetal84} around the same time.
A practical extension in the context of gas networks is the repetitive modeling approach~\cite{DymLJ07},
which enables a modular construction.
For extensive details on gas network modeling,
see the works~\cite{Szi86,Osi87,FugGGetal15,SchSW15,MisFH16,DomHLetal17,BenGHetal18,StoM18},
and for a concise summary of the overall approach we recommend~\cite{AzeJ07}.
Furthermore, a system-theoretic approach to gas networks is discussed in~\cite{DorT08,GugH20},
and results on boundary reachability (controllability) and observability for this class of models have been derived in~\cite{AzeJL13,AzeJL15}.

In terms of complexity reduction for gas network models, earlier works applied techniques such as
combining parallel pipelines~\cite{StrW70}, singular perturbation~\cite{morSte87} and symbolic simplifications~\cite{morMohHHetal04}.
Younger works introduced projection-based model reduction methods from fluid dynamics,
proper orthogonal decomposition (POD)~\cite{morGruHKetal13,morGruJHetal14,morGruHR16},
and system-theoretic methods~\cite{morLuMM16} (matrix interpolation), \cite{morAal10,morLuMM17} (balanced truncation), or
Pad\'e-type approximations~\cite{morLilM17,morEggKLetal18} (moment matching).

In this work, we conceptually combine these previous approaches,
by using system-theoretic but data-driven methods that are structure-preserving.
The utilized data-driven assembly of the system-theoretic operators, central to the employed methods,
is also a partial answer to the challenges posed in~\cite[Remark~5.10]{HanLMetal17};
while structure preservation means in this context, retaining (particularly not mixing) the discretized physical variables in the reduced order model.
Furthermore, we note that from this work's point of view,
\cite{ZloCB15,ZloDBetal15,SunZ19} are concerned rather with (valuable) model simplifications than model reduction.

To avoid the analytically most complex aspects of the gas network model -- the nonlinearities --
one could linearize the model around an operating point.
Yet, the different nonlinearities (i.e. friction, compressibility and compressors) are unlikely to have a compatible operating point for a wide range of scenarios.
Furthermore, linearized and simplified models of gas flow have limitations with simulations of real scenarios~\cite{Hen18}, \cite[Ch.~7]{van04};
hence, we use a nonlinear model.
Since there is no general theory for model reduction of nonlinear systems,
and a high degree of modularity in the gas network modeling process,
model reduction algorithms have to be compared heuristically to determine their applicability.
As a result of this reasoning and a demand for gas network simulation software tools~\cite{DomGHetal15,morHimGB21}, a platform named \morgen{}
(\textbf{m}odel \textbf{o}rder \textbf{r}eduction for \textbf{g}as and \textbf{e}nergy \textbf{n}etworks)
was designed with the goal to compare different models, solvers and reductors.
The \morgen{}\footnote{``Morgen'' is also the German language word for ``tomorrow''.} platform is an open-source project,
and designed in a configurable, modular, and extensible manner, so that modeling, discretization or model reduction specialists can compare their methods.

In summary, this work contributes a full, but also fully modular, modeling, model reduction and simulation open-source software stack for gas networks,
and potentially other energy network systems (i.e. district heating networks, water networks), which brings together research results from various disciplines.

Overall, this work is organized as follows:
In \cref{sec:model} the gas network model, simplifications, non-pipe elements,
a relation to port-Hamiltonian theory, and obtaining a steady-state initial condition are described.
\cref{sec:reduction} and \cref{sec:methods} outline the general model reduction idea and propose five reduction method classes.
The design and features of the \morgen{} platform are summarized in \cref{sec:morgen},
followed by three sets of numerical experiments in \cref{sec:numex}.
We conclude by an outlook (\cref{sec:outlook}) and an evaluation of our findings in \cref{sec:conclusion}.


\section{The Transient Gas Network Model}\label{sec:model}
The goal of this section is to describe the partial differential-algebraic equation model of a gas network as an input-output system
that maps boundary values to quantities of interest.
First, the model for a single pipeline is summarized, which is based on the isothermal Euler equations of gas dynamics~\cite{Osi84,Osi96}.
Then, it is generalized to a network of pipes, and simplified compressors are added.
Additionally, a connection to energy-based modeling is made.

Even though further non-pipe elements are common in gas networks,
such as resistors, coolers, heaters, valves and control valves~\cite{FugGGetal15,MokPM19},
we prioritized compressors to focus on the model reduction aspect on a macro scale.
Moreover, the practical numerical problems of scale homogenization, spatial discretization,
index reduction and steady-state approximation are discussed in this section.

\pagebreak

\subsection{The Gas Pipeline Model}
The principal building blocks of gas transport networks are pipelines or ducts.
Since the length of pipes exceeds their diameter by far ($L>500d$, \cite{KraSVetal84}),
a spatial one-dimensional model suffices.
We model gas flow in a (cylindrical) pipe of length $L$ connecting two junctions by the isothermal Euler equations:
\begin{align}\label{eq:pipeline}
\begin{split}
 \frac{1}{\gamma_0 \, z_0} \, \partial_t p &= -\frac{1}{S} \, \partial_x q, \\
 \partial_t q &= -S \, \partial_x p - \Big(\frac{S \, g \,\partial_x h}{\gamma_0 \, z_0} p + \frac{\gamma_0 \, z_0 \lambda_0}{2 \, d \, S} \frac{|q| \, q}{p}\Big),
\end{split}
\end{align}
which determine the evolution of the coupled pressure $p(x,t)$ and mass-flux $q(x,t)$ variables.
The physical dimension of the pipe enters as its diameter $d$ and the derived cross-section area $S = \frac{\pi}{4}d^2$,
which is assumed constant, ignoring the influence of temperature and pressure on the pipe walls.
These coupled partial differential equations (PDE) can also be characterized as a nonlinear, two-dimensional, first-order hyperbolic system of conservation laws:
the pressure $p$ preserves continuity, while the mass-flux $q$ conserves momentum.

Following~\cite{Kiu94,HerMS10,PamBD16} and~\cite[Sec.~2.1]{BenGHetal18}, the inertia term has been neglected due to a low Mach number $m \ll 1$,
which leads to the \emph{ISO2} model in the \emph{TRR154} classification~\cite{DomHLetal17},
also known as \emph{friction-dominated model}~\cite[Sec.~3.2.1]{BroGH11}.
Furthermore, we assume a turbulent flow with a Reynolds number exceeding \linebreak $\operatorname{Re} \gg 10^5$ \cite{Guy67,EhrS05},
neither line breaks or valve closings happen intraday (to preclude associated shocks~\cite{DorF11}),
and low-frequency boundary values~\cite{PamBD16,AzePA19},
which in this work are the supply pressure and demand mass-flux,
due to frequent use in literature, and use-cases like \emph{guaranteed demand pressures}~\cite{Her07,BerS19}.

In~\eqref{eq:pipeline}, the linear reaction term describes the effect of gravity (with standard gravity $g \equiv 9.80665 \frac{\text{m}}{\text{s}^2}$) due to the pipe height $h$,
while the nonlinear reaction term models loss of momentum due to friction at the pipe walls,
specified by the (Darcy-Weisbach) friction factor $\lambda_0 := \lambda(d, k, \operatorname{Re}_0)$,
given a pipe roughness $k$, and an estimated mean Reynolds number $\operatorname{Re}_0$, see~\cite[Sec.~2.2]{BenGHetal18}\footnote{Additionally to~\cite{BenGHetal18},
the IGT formula~\cite{DomHLetal17}, \cite[Sec.~15.2.3]{MokPM19} is implemented in \morgen{}.}.
This friction term is principal to the accuracy of the gas pipeline model~\cite{OsiC10,Mis12,DomHLetal17,Hen18}.

In this model variant, a (globally) constant mean compressibility factor \linebreak $z_0 := z(p_0,T_0) \in \R$ is assumed~\cite{Osi96,DorT08,HerDDetal09,PfeFGetal15},
as well as a constant gas state $\gamma_0:= R_S T_0$,
whereas the temperature $T_0$ and the specific gas constant $R_S$ are treated as parameters (see~\cref{sec:param}).
To this end, the steady-state pressure $\bar{p} =: p_0$ is used to compute $z_0$,
via heuristic formulas based on the Virial expansion~\cite{Cha09}, \cite[Sec.~2.3]{BenGHetal18}\footnote{Additionally to~\cite{BenGHetal18},
the {DVGW-G-2000} equation~\cite[Ch.~9]{MisFH16} is implemented in \morgen{}.}.

\subsection{Homogenizing Scales}\label{sec:scal}
The SI-based units for pressure and mass-flux are [Pa] and [kg/s], respectively.
This introduces a difference in scales of five orders of magnitude between the variables $p$ and $q$,
and hence induces numerical problems.
To counter this multiscale structure, we simply rescale the pressure from [Pa] to [bar] which conveniently comprises a factor of $10^5$.
Nonetheless, the model still consists of two interacting physical variables, hence the model still has to be treated as a coupled system,
however, without numerical multiscale issues.

\pagebreak

\subsection{The Gas Network Model}\label{sec:network}
Given the model for a single pipe from the previous section, a (gas) network of pipes can be encoded by a finite directed graph,
which is a tuple $\mathcal{G} = (\mathcal{N},\mathcal{E})$ of finite sets symbolizing nodes $\mathcal{N}$, and oriented edges $\mathcal{E}$.
The edges correspond to pipes, while the nodes represent the junctions connecting pipes.
The connectivity of the network is the relationship between edges and junctions,
given by the incidence matrix $\mathcal{A} \in \{-1,0,1\}^{|\mathcal{N}| \times |\mathcal{E}|}$,
a map from edges to nodes, such that:
\begin{align*}
 \mathcal{A}_{ij} = \begin{cases} -1 & \mathcal{E}_j ~ \text{connects \textbf{from}} ~ \mathcal{N}_i, \\
                        \phantom{-}0 & \mathcal{E}_j ~ \text{connects \textbf{not}} ~ \mathcal{N}_i, \\
                        \phantom{-}1 & \mathcal{E}_j ~ \text{connects \textbf{to}} ~\mathcal{N}_i. \end{cases} 
\end{align*}
Note, that the orientation of the edges is not enforcing the dynamic flow direction of the gas,
but is necessary to determine the complexity and boundary of the overall networked model~\cite{morGruJHetal14,BenGHetal18}.

We introduce the notation $|\mathcal{A}|$ for the component-wise absolute value of a matrix.
Using this absolute value, the following partial incidence matrices associating edges entering and leaving nodes respectively are defined similar to a Heaviside function:
\begin{align*}
 \mathcal{A}_R := {\textstyle \frac{1}{2}} (\mathcal{A} + |\mathcal{A}|), \quad
 \mathcal{A}_L := {\textstyle \frac{1}{2}} (\mathcal{A} - |\mathcal{A}|).
\end{align*}

Next, based on this connectivity, certain conservation properties are enforced to maintain a network balance,
and thus ensure physical relevance of the gas network model.
Specifically, the Kirchhoff laws are applied to the network in vectorized (or rather matricized) form~\cite{Szo12,morBenBG18}:
\begin{enumerate}

 \item \emph{The sum of in- and outflows (mass-flux) at every node (junction) is zero:} \linebreak
       This means that no gas gets lost in transport from one pipe to the next, with the exception of boundary nodes.
       Hence, a vector of flows $q \in \R^{|\mathcal{E}|}$ applied to the incidence matrix equals the (out-)flow at the boundary (discharge) nodes $d_q : \R \to \R^{|\mathcal{N}_D|}$,
       which are mapped into the network via \linebreak $\mathcal{B}_d \in \{0,1\}^{|\mathcal{N}| \times |\mathcal{N}_D|}$:
  \begin{align*}
   \mathcal{A} \, q(t) = \mathcal{B}_d \, d_q(t),
  \end{align*}
       with $\mathcal{N}_D \subset \mathcal{N}$ denoting the subset of boundary nodes,
       which only connect \textbf{from one} node respectively, but \textbf{not to any} node.

 \item \emph{The sum of directed pressure drops in every fundamental loop is zero:} \linebreak
       Fortunately, an equivalent representation~\cite[Ch.~7.3]{Zer00} can be used, which resolves implicitly.
       It remains to ensure that the nodal pressures at the in-flow boundary (supply) nodes are associated to the boundary function \linebreak $s_p : \R \to \R^{|\mathcal{N}_S|}$,
       which are mapped to the network via $\mathcal{B}_s \in \{0,1\}^{|\mathcal{N}_S| \times |\mathcal{E}|}$:
  \begin{align*}
   \mathcal{A}_0^\T \, p(t) + \mathcal{B}_s^\T \, s_p(t) = A_{0,R}^\T \, p(t) - A_{0,L}^\T \, p(t),
  \end{align*}
       with $\mathcal{N}_S \subset \mathcal{N}$ denoting the subset of boundary nodes,
       which only connect \textbf{to one} node respectively, \textbf{but not from} any node,
       and the reduced incidence matrix $\mathcal{A}_0 \in \{-1,0,1\}^{|\mathcal{N}_0| \times |\mathcal{E}|}$, $|\mathcal{N}_0| = |\mathcal{N}| - |\mathcal{N}_S|$, with all rows associated to supply nodes removed.

\end{enumerate}

Given a connected and directed graph representing a gas network topology,
with the dynamic flow in the pipe edges modeled by the PDE~\eqref{eq:pipeline},
then yields a partial differential-algebraic equation (PDAE) due to the above constraints.

\pagebreak

\subsection{Discretization and Index Reduction}\label{sec:disc}
Next, we delineate the discretization of the spatial differential operators and reduction of the (P)DAE index in the networked system,
yielding a system of Ordinary Differential Equations (ODE).
Eventually, the remaining discretization of the temporal differential operators is addressed.

We explicitly do not use the decoupling approaches from \cite{morBanGB20} or \cite{morBanAGetal20},
as the former employs linearization and hence does not fit this setting,
while compared to the latter, our equivalent analytic index reduction is more convenient here.

The partial differential(-algebraic) equation is discretized using the method of lines:
First in space, then in time, yielding a (nonlinear) dynamic system.
For the spatial discretization a first-order upwind finite difference scheme is utilized \cite{ThoT87,AzePA19}.
We select (only) two points for each of the $k$ pipes with length $L_k$,
namely the start ($\,\,\cdot^R$) and end point ($\,\,\cdot^L$):
\begin{align*}
 \partial_x p_k &\approx \frac{p_k^R - p_k^L}{L_k}, && k\in \mathcal{E},\\
 \partial_x q_k &\approx \frac{q_k^R - q_k^L}{L_k}, && k\in \mathcal{E}.
\end{align*}
The matter of short, long and varying lengths $L_k$ is addressed in \cref{sec:tdisc}.

For each pipe, this leads to the following equations:
\begin{align*}
 \frac{1}{\gamma_0 \, z_0} \, \partial_t p_k^* ={}&-\frac{1}{S_k} \frac{q_k^R - q_k^L}{L_k}, \\
 \partial_t q_k^* ={}& -S_k \,\frac{p_k^R - p_k^L}{L_k} - \Big( \frac{S_k \, g \, (h_k^R - h_k^L)}{\gamma_0 \, z_0 \, L_k} p_k^*
                                                          + \frac{\gamma_0 \, z_0 \, \lambda_{0,k}}{2 \, d_k \, S_k} \frac{|q_k^*| \, q_k^*}{p_k^*}\Big).
\end{align*}
Now, different choices for $(\,\,\cdot^*)$ are surmisable.
Subsequently, two specific combinations of $p^*$ and $q^*$ will be discussed:
The midpoint discretization~\cite{morGruHKetal13,morGruJHetal14,ZloDBetal15,morGruHR16,morBenBG18},
and the left-right discretization~\cite{morGruJ15,morRog15,GruH20} resulting in (implicit) ODEs.
For an error analysis of these two discretization variants, see~\cite{StoM18}.
In the following, we describe a unified approach of deriving these index-reducible discretizations.

For notational ease in the coming subsections, a vectorized form for the above (networked) system including its constraints is given by:
\begin{align}
\begin{split}
 d_0 \, \partial_t p^* ={}& D_p (q^R - q^L), \\
        \partial_t q^* ={}& D_q (\mathcal{A}_0^\T p + \mathcal{B}_s^\T s_p) - \Big(D_q D_g \, d_0 \, p^* + D_f \, \frac{|q^*| \, q^*}{d_0 \, p^*}\Big), \label{eq:pcon} \\
     \mathcal{A}_0 q^* ={} &\mathcal{B}_d \, d_q,
\end{split}
\end{align}
using, thus resolving, the constraint $\mathcal{A}_0^\T p + \mathcal{B}_s^\T \, s_p ={} p^R - p^L$,
as well as \linebreak $d_0 := \frac{1}{\gamma_0 z_0} \in \R$ and the diagonal matrices:
\begin{align*}
 D_{p,kk} := -\frac{1}{S_k \, L_k}, \;\;
 D_{q,kk} := -\frac{S_k}{L_k}, \;\;
 D_{g,kk} := g \, (h_k^R - h_k^L), \;\;
 D_{f,kk} := \frac{\lambda_{0,k}}{2 \, d_k \, S_k}.
\end{align*}
Note, that $(d_0 \cdot p^*)$ corresponds to the global average density, $(S_k^{-1} \cdot q_k^*)$ to the local flow rate,
and depending on the choices for $p^*$ and $q^*$,
the model's analytic and numerical character will differ.

\pagebreak

\subsubsection{Midpoint Discretization}\label{sec:midpoint}
In case of the midpoint discretization, we set $p_k^*$ and $q_k^*$ to the mean of its associated edge's endpoints:
\begin{align*}
 p_k^* &= \frac{p_k^R + p_k^L}{2} =: p_k^+, \\
 q_k^* &= \frac{q_k^R + q_k^L}{2} =: q_k^+.
\end{align*}
Furthermore, we define $q^- := \frac{1}{2}(q^R - q^L)$,
and note, that in vectorized form, $p^+ = \frac{1}{2}(|\mathcal{A}_0^\T| p+|\mathcal{B}_s^\T| s_p)$.

Together with the algebraic constraints from \cref{sec:network},
a DAE system in the variables $p$, $q^+$, and $q^-$ arises:
\begin{subequations}\label{eq:midpoint}
\begin{align}
 d_0 \, {\textstyle \frac{1}{2}}(|\mathcal{A}_0^\T|\dot{p} +|\mathcal{B}^\T_s|\dot{s}_p) &= D_p \, 2 \, q^-, \label{eq:midpoint_a} \\
\begin{split}
 \dot{q}^+ &= D_q (\mathcal{A}_0^\T p + \mathcal{B}^\T_s s_p) - \Big( D_q D_g d_0 {\textstyle \frac{1}{2}}(|\mathcal{A}_0^\T| p + |\mathcal{B}^\T_s| s_p) \\
 	& \qquad\qquad\qquad\qquad\quad + D_f \frac{|q^+| \, q^+} {d_0 \frac{1}{2}(|\mathcal{A}_0^\T| p +|\mathcal{B}^\T_s| s_p)} \, \Big),
\end{split} \label{eq:midpoint_b} \\
 0 &= \mathcal{A}_0 q^+ + |\mathcal{A}_0|q^- -  \mathcal{B}_d d_q. \label{eq:midpoint_c} 
\end{align}
\end{subequations}
Since we aim to obtain an ODE, we need to transform this DAE system.
The complexity of deriving this transformation is quantified by the DAE's index.
From the various DAE index concepts, we use the tractability index $\tau$ \cite{Mae02},
for which the midpoint discretization guarantees $\tau \leq 2$ \cite{morGruJHetal14}.

This DAE can be decoupled into an ODE by rewriting it in the variables $p$ and $q^+$.
To this end, 
\begin{enumerate}

\item the pressure boundary condition implicitly resolves~\eqref{eq:pcon}.

\item By multiplying the differential equation~\eqref{eq:midpoint_a} by $(|\mathcal{A}_0| D_p^{-1})$ from the left,
the remaining algebraic constraint~\eqref{eq:midpoint_c} is removed by replacing $\mathcal{A}_0 q^-$ by $(-\mathcal{A}_0 q^+ + \mathcal{B}_d d_q)$ in~\eqref{eq:midpoint_a}.
Since $D_p$ is a diagonal matrix, this is also numerically feasible.

\end{enumerate}
We also pre-multiply \eqref{eq:midpoint_b} with the inverse of the diagonal matrix $D_q$.
Altogether, we obtain:
\begin{subequations}\label{eq:irmidpoint}
\begin{align}
 |\mathcal{A}_0| ({\textstyle \frac{1}{4}} D_p^{-1} d_0) |\mathcal{A}_0^\T| \,\, \dot{p}\phantom{^+} &= -\mathcal{A}_0 q^+ \! + \mathcal{B}_d d_q - |\mathcal{A}_0| ({\textstyle \frac{1}{2}} D_p^{-1} d_0) |\mathcal{B}^\T_s|\dot{s}_p, \\
\begin{split}
 D_q^{-1} \dot{q}^+ &= \;\; \mathcal{A}_0^\T \, p \; + \, \mathcal{B}^\T_s s_p - \Big( D_g d_0 {\textstyle \frac{1}{2}} (|\mathcal{A}_0^\T| p + |\mathcal{B}^\T_s| s_p)  \\
         & \qquad\qquad\qquad\quad\;\;\; + D_q^{-1} D_f \frac{|q^+| \, q^+}{d_0 {\textstyle \frac{1}{2}}(|\mathcal{A}_0^\T| p +|\mathcal{B}^\T_s| s_p)} \Big).
\end{split}
\end{align}
\end{subequations}
This system of a pressure and mass-flux variable now consists of only differential equations.
Notably, the first equation of the ODE system contains a temporal derivative of the input function $s_p$,
which practically would need to be approximated numerically, for example by finite differences.
However, we will assume that all inputs are sums of step functions, so that effectively $\dot{s}_p \equiv 0$,
which is reasonable as we assume exclusively low-frequency boundary values.

\pagebreak

\subsubsection{Endpoint Discretization}\label{sec:endpoint}
For the endpoint discretization, also called left-right discretization,
we set $p_k^*$ and $q_k^*$ to the left and right endpoints, respectively:
\begin{align*}
 p_k^* &= p_k^R, \\
 q_k^* &= q_k^L.
\end{align*}
Since $(\mathcal{B}_s^\T + |\mathcal{B}_s^\T|) \, s_p = 0$, we can write $p^R = \mathcal{A}_{0,R}^\T \, p$.
With the algebraic constraints from \cref{sec:network},
a DAE system in the variables $p$, $q^R$, and $q^L$ results:
\begin{subequations}\label{eq:endpoint}
\begin{align}
 d_0 \mathcal{A}_{0,R}^\T \, \dot{p} &= D_p (q^R - q^L), \label{eq:leftright_a} \\
\begin{split}
 \dot{q}^L &= D_q (\mathcal{A}_0^\T p + \mathcal{B}^\T_s p_s) - \Big( D_q D_g d_0 \, \mathcal{A}_{0,R}^\T \, p + D_f \frac{|q^L| \, q^L}{d_0 \mathcal{A}_{0,R}^\T \, p}\Big),
\end{split} \label{eq:leftright_b} \\
 0 &= \mathcal{A}_{0,R} \, q^R + \mathcal{A}_{0,L} \, q^L -  \mathcal{B}_d \, d_q. \label{eq:leftright_c}
\end{align}
\end{subequations}
As for the midpoint discretization, we want to derive a system of ODEs.
For the endpoint discretization, it is shown in~\cite{morGruJ15,morRog15},
that the tractability index is $\tau = 1$, if all edges connecting supply nodes are directed from the supply,
and each component of the graph is connected to at least one supply.
This implies, no two supplies are to be directly connected.
Similar to \cref{sec:midpoint}, this DAE can be decoupled into an ODE by rewriting it in the variables $p$ and $q^L$.
Applying equivalent steps to~\eqref{eq:endpoint} as for the midpoint decoupling~\eqref{eq:midpoint} yields:
\begin{subequations}\label{eq:irendpoint}
\begin{align}
\!\!\!\! (\mathcal{A}_{0,R} D_p^{-1} d_0 \, \mathcal{A}_{0,R}^\T)  \, \dot{p}\phantom{^L} &= -\mathcal{A}_0 \, q^L \! + \mathcal{B}_d \, d_q, \label{eq:odelr_a} \\
\begin{split}
 D_q^{-1} \dot{q}^L &= \mathcal{A}_0^\T \, p + \mathcal{B}^\T_s s_p - \Big( D_g d_0 \, \mathcal{A}_{0,R}^\T \, p + D_q^{-1} D_f \frac{|q^L| \, q^L}{d_0 \mathcal{A}_{0,R}^\T \, p} \Big),
\end{split} \label{eq:odelr_b}
\end{align}
\end{subequations}
using $\mathcal{A}_{0,R} + \mathcal{A}_{0,L} = \mathcal{A}_0$ in~\eqref{eq:odelr_a}.

An advantage of this endpoint discretization, in addition to the absence of a derivative of an input function,
is the input-free friction term in~\eqref{eq:odelr_b}.

\subsubsection{Temporal Discretization}\label{sec:tdisc}
After spatial discretization and index reduction of the gas network model, a system of stiff nonlinear ODEs (in time) remains.
The remaining temporal differential operator(s) are discretized using time-stepping schemes.

Due to the hyperbolicity of the pipeline model,
the temporal resolution $\Delta t$ used for the discrete integration interdepends on the spatial resolution $\Delta x$
employed for the discretization of the spatial differential operators.
Formally, this is expressed by the Courant-Friedrichs-Levy (CFL) condition~\cite{HelMY14,Ban15,WiilC20},
which states that the propagation of information cannot be faster than its conveyor:
\begin{align*}
 \lambda_{\text{CFL}} := v_{\text{max}} \frac{\Delta t}{\Delta x} < 1,
\end{align*}
with the (dimensionless) CFL constant $\lambda_{\text{CFL}}$,
symbolizing the ratio of temporal and spatial discretization step-width scaled by the peak gas velocity $v_{\text{max}}$.
Since the flow is subsonic, $v_{\text{max}}$ could be estimated from the (linearized) characteristics~\cite{HelMY14},
or via the boundary values\footnote{e.g.~{\scriptsize~\url{https://petrowiki.org/Pipeline_design_consideration_and_standards\#Gas_line_sizing}}}.
However, we fix this maximum gas speed to $v_{\text{max}} = 20\frac{\operatorname{m}}{\operatorname{s}}$ (practically this is configurable in \morgen{}).

\pagebreak

Due to this relation of the space and time discretization and a pre-selected application-specific sampling frequency $\Delta t$ of the output trajectory (i.e., every~$60$s),
$\Delta x$ has to be adapted accordingly.
The spatial discretization by finite differences in the previous sections ignores pipeline length,
as each pipe is assigned only one (finite) difference.
This means, pipes are potentially too long or too short with respect to a nominal length $\Delta x$,
determined by the CFL condition $\Delta x = (1-\varepsilon) \, v_{\text{max}} \, \Delta t$, ($0 < \varepsilon \ll 1$).
Thus, too long pipes are subdivided into virtual pipes of nominal length,
while too short pipes, including a potential remainder of too long pipes,
are ``rounded'' to a full nominal-length pipe,
yet with a friction term scaled by the fraction of the short pipe's length compared to the nominal length,
\begin{align*}
 \widetilde{D}_{f,kk} = \begin{cases} D_{f,kk} & L_k = \Delta x, \\ \frac{L_k}{\Delta x} D_{f,kk} & L_k < \Delta x.\end{cases}
\end{align*}
This approach assumes that delays due to the forced virtual length of an actually short pipe is insignificant,
hence this simple homogenization of pipe lengths may be improved by replacing short pipes with friction-less shortcuts and a static pressure-drop,
as used in the quasi-static model~\cite{HerMS10,morGruHKetal13} and similar to the subsequent compressor model in \cref{sec:comp}.

Overall, we refine each pipe into a sequence of pipes of a selected nominal length -- a graph level refinement --
which is determined using the CFL condition.
We note here, that this methodology is aimed at ensuring a certain minimum length for each pipe,
as the shortest pipe may dictate an unnecessarily finely resolved time discretization.
In terms of a pipe's maximum length, suggestions are for example: $5$km (\cite{morGruHKetal13}),
or $10$km (\cite{ZloCB15}).

As the model is composed of a stiff, linear (hyperbolic) and a nonlinear component,
an implicit solution of the linear part using a diagonally implicit Runge-Kutta (DIRK) method,
and an explicit solution of the nonlinear part via an strong stability preserving (SSP) method,
by an IMEX (IMplicit-EXplicit) solver, as proposed in~\cite[Sec.~3.2.2]{Ban15}, is targeted.
The actual quadrature rules used to compute the transient solutions are detailed in \cref{sec:solvers}.

\subsection{Simplified Compressors}\label{sec:comp}
Beyond pipes, gas networks comprise a variety of non-pipe elements, of which the most important are compressors.
Compressors increase gas pressure to counteract cumulative effects of retarding forces (friction, gravity, inertia etc.),
and are grouped into stations with many possible configurations.
For our purposes, we just allow fixed configurations on a macro scale~\cite{MakVZetal19} per scenario,
which leads to a compressor being modeled as a special kind of edge that boosts the pressure from its suction inlet to its discharge outlet.

Compressors are typically modeled ``ideally'', based on their power consumption, for example as a special node type; in-depth discussions can be found in~\cite{Her07,SchSW15}.
Due to this consumption model, such ideal compressor units are useful for energy utilization optimization tasks~\cite{EhrS05},
yet, for a simplified transient simulation aspect already too complicated.
A more practical approach is taken in~\cite{ZloCB15,ZloDBetal15,MakVZetal16}, where a compression ratio $\alpha_i \geq 1$ is used to scale the pressure in each node,
or pipe~\cite{SunZ19}, which means $\alpha_i > 1$ indicates compression / a compressor, otherwise ($\alpha_i = 1$) a pipe.

Here, we utilize a likewise simple compressor model similar to~\cite{VufMC15,GolRT76},
for which we assume it is propelled by an external energy source, for example, given a compressor electrification, by excess renewable power~\cite{EriEA19},
or that the off-take in gas is insignificant.
But instead of using compression ratios, a constant (or parametric) target pressure is prescribed,
modeling \emph{discharge pressure control}~\cite{StrW70}.

The following affine compressor model is a variant of the compressor presented in~\cite{Ste07} and used in~\cite{AzeJ07}.
We model a simplified compressor by a level, short pipe which increases the pressure at its outflow to a specified value $\bar{p}_c$ (and without friction, $\lambda_{ij} \equiv 0$).
Given the pipe from nodes $i$ to $j$ is treated as such a ``compressor pipe'', with target pressure of $\bar{p}_c$,
the variables $p_{ij}$ and $q_{ij}$ are given by the differential equations:
\begin{align}\label{eq:comp}
\begin{split}
 \dot{p}_{ij}(t) &= q_j - q_i, \\
 \dot{q}_{ij}(t) &= \bar{p}_c - p_i.
\end{split}
\end{align}
The target pressure $\bar{p}_c$ could be a step function $\bar{p}_c(t)$, and hence a control input~\cite{SunZ19},
which could be accompanied by a discharge mass-flux output.

A compressor could also be interpreted as an actual pipe with ``negative friction'',
and we considered using such nonphysical pipes as compressor model,
but a difficult transformation between friction and pressure increase would have to be calibrated for every model variant (including friction factor formulas) and updated with every change in any model.

\subsection{Parametrization}\label{sec:param}
For the considered pipeline model~\eqref{eq:pipeline}, two scalar parameters are of practical interest:
The temperature of the gas $T_0$ (in [K]), which is assumed to be constant throughout the network,
and the global specific gas constant $R_S$ (in [J/(kg\,K)]).

Due to mainly underground on-shore pipelines~\cite[Ch.~45]{MisFH16}, and coolers in compressor stations~\cite{MakVZetal19},
using an isothermal model is a reasonable simplification.  
However, temperature is relevant as a global parameter, since the use of an isothermal model ``freezes'' the dynamic energy (temperature) component in the original Euler equations (in time),
and while intraday ambient temperature variation can be neglected for simulations with a $24$-hour time horizon,
the temperature difference of a hot summer day and a cold winter day
should be taken into account by a parameter representing an average temperature.

The specific gas constant, on the other hand, is determined by the gas composition, which may also vary.
Again, local variations during an intraday simulation are neglected in this work, yet the average gas mixture of natural gas with,
for example, hydrogen or bio-gas is relevant, so a parameter for the average specific gas constant is introduced.
Together, the parameter-space $\Theta$ is given by:
\begin{align*}
 \theta := \begin{pmatrix} T_0 \\ R_S \end{pmatrix} \in \Theta \subset \R^2,
\end{align*}
and note that $\theta$ is used in the model (only) as $d_0 = \frac{1}{\gamma_0 z_0} = \frac{1}{(T_0 R_S) z_0} = \frac{1}{\theta_0 \theta_1 z_0}$.
Yet, lumping into a single parameter would impede physical interpretation.

Applied to the respective components of the input-output model of \cref{sec:iomodel}, this leads to parameter-dependent quantities,
which need to be regarded accordingly by the model reduction as discussed in \cref{sec:pmor}.

\pagebreak

\subsection{Input-Output Model}\label{sec:iomodel}
After spatial discretization and index reduction, we end up with a square input-output system,
a system with the same number of inputs and outputs, consisting of an ordinary differential equation, an output function and an initial value:
\begin{align}\!\!
 \setlength\arraycolsep{1pt}\overbrace{\begin{pmatrix} E_p(\theta) & 0 \\ 0 & E_q \end{pmatrix}}^E \!\! \begin{pmatrix} \dot{p} \\ \dot{q} \end{pmatrix} &\!=\!
 \setlength\arraycolsep{1pt}\overbrace{\begin{pmatrix} 0 & A_{pq} \\ A_{qp} & 0 \end{pmatrix}}^A \!\! \begin{pmatrix} p \\ q \end{pmatrix} \!+\!
 \setlength\arraycolsep{1pt}\overbrace{\begin{pmatrix} 0 & B_{pd} \\ B_{qs} & 0 \end{pmatrix}}^B \!\! \begin{pmatrix} s_p \\ d_q \end{pmatrix} \!+\!
 \overbrace{\begin{pmatrix} 0 \\ F_c \end{pmatrix} \!+\! \begin{pmatrix} 0 \\ f_q(p,q,s_p,\theta) \end{pmatrix}}^f, \notag \\
 \begin{pmatrix} s_q \\ d_p \end{pmatrix} &\!=\! 
 \setlength\arraycolsep{1pt}\underbrace{\begin{pmatrix} 0 & C_{sq} \\ C_{dp} & 0 \end{pmatrix}}_C \!\! \begin{pmatrix} p \\ q \end{pmatrix}, \label{eq:iosys} \\
 \begin{pmatrix} p_0 \\ q_0 \end{pmatrix} &\!=\! \begin{pmatrix} \bar{p}(\bar{s}_p,\bar{d}_q) \\ \bar{q}(\bar{s}_p,\bar{d}_q) \end{pmatrix}, \notag
\end{align}
with parameter independent linear vector field components $A$ and $B$, parametric mass matrix $E(\theta)$,
and nonlinear friction and gravity retarding term $f_q(p,q,s_p,\theta)$.
The actual composition of the dynamical system components depends on the discretization and index reduction, cf.~\cref{sec:disc},
while the linear output function $C$ consists of $C_{sq} = -\mathcal{B}_s$ and $C_{dp} = \mathcal{B}_d^\T$.
The load vector $F_c \in \R^{N_q}$ accumulates the respective discharge pressures $\bar{p}_c$, as described in \cref{sec:comp}, for all compressors,
and the initial state is given by a steady state, whose computation is detailed in \cref{sec:steady}, depending on given steady state boundary values $\bar{s}_p$, $\bar{d}_q$.
Altogether, the gas network model is a generalized linear system $(E,A,B,C)$, together with a nonlinear part $f$.

A control system formulation of transient gas network simulation was already formulated in~\cite{KraSVetal84}, and recently in~\cite{WiilC20}.
Also in~\cite{DorT08}, a so-called ``systemic interpretation'' is discussed;
inspired by~\cite[Figure~2]{DorT08}, we schematically illustrate~\eqref{eq:iosys} in \cref{fig:schematic}.

The specific structure and grouping for the single pipeline model~\eqref{eq:pipeline},
the index-reduced spatially discrete network models~\eqref{eq:irmidpoint}, \eqref{eq:irendpoint},
and the input-output \linebreak model~\eqref{eq:iosys}, is justified by the numerical processing:
only two model components depend on the parameters, temperature $T_0$ and specific gas constant $R_S$,
as well as on the compressibility factor $z_0$, namely the mass matrix $E_p$ and the jointly treated retarding forces gravity and friction $f_q$.
Hence, the linear part of the right-hand side vector field is non-parametric and compressibility-agnostic.

Overall, this system maps input boundary values, in the scope of this work, pressure at the inlets and mass-flux at the outlets,
via the internal state, to output quantities of interest, here, mass-flux at the inlets and pressure at the outlets:
\begin{align*}
 \begin{pmatrix} s_p \\ d_q \end{pmatrix} \;\; & \stackrel{S}{\longrightarrow} \;\; \begin{pmatrix} s_q \\ d_p \end{pmatrix} \\
 \raisebox{.75ex}{\text{$\searrow$}\;} &\begin{pmatrix} p \\ q \end{pmatrix} \raisebox{.75ex}{\;\text{$\nearrow$}}
\end{align*}

To this type of input-output system we can now apply (data-driven) system-theoretic model reduction methods,
which preserve the input-to-output mapping~$S$, but explicitly not the internal state $\begin{pmatrix} p & q\end{pmatrix}^\T$.
Lastly, we note that based on~\cite{HueMGetal18}, we added a model fact sheet in the \nameref{sec:appendix}.

\pagebreak

\begin{figure}[h!]\centering
\includegraphics[height=.28\textheight]{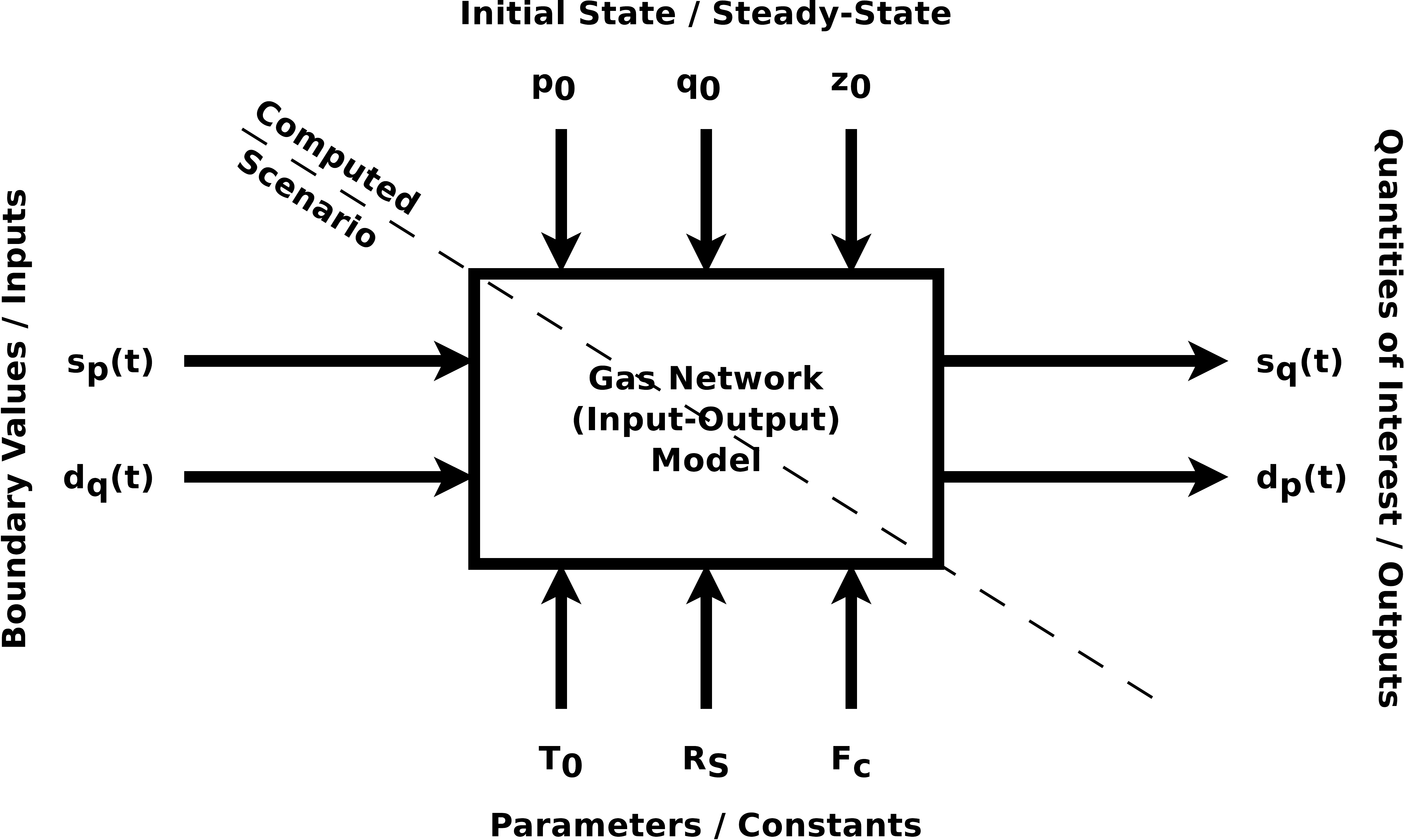}
\caption{Schematic illustration of gas network input-output model.}
\label{fig:schematic}
\end{figure}

\subsection{Steady-State Computation}\label{sec:steady}
After spatial discretization, the dynamic simulation becomes an initial value problem.
Yet, only the boundary values of the network model are known a-priori.
This means the internal state at time $t=0$ is unknown.
We assume simulations always start at a steady-state $\bar{p}$, $\bar{q}$ for which $\partial_t p = \partial_t q = 0$,
given some (initial) boundary values $\bar{s}_p$, $\bar{d}_q$.
The internal state is then computable as a steady-state problem.
Since the employed model is nonlinear, we approximate the steady-state by a two-step procedure:
\begin{enumerate}[leftmargin=2em]

 \item[1a.] Linear mass-flux steady-state: $A_{pq}\,\bar{q} = -B_{pd}\,\bar{d}_q$.

 \item[1b.] Linear pressure steady-state: $A_{qp}\,\hat{p} = -\Big(B_{qs}\,\bar{s}_p + F_c\Big)$.

 \item[2.\phantom{b}] Corrected pressure steady-state: $A_{qp}\,\,\bar{p} = -\Big(B_{qs}\,\bar{s}_p + F_c + f_q(\hat{p},\bar{q},\bar{s}_p,\theta)\Big)$.

\end{enumerate}
Step~2 can be repeated until an error threshold is met by using the previously approximated pressure steady-state.
Practically, the linear problems in Step~1 and Step~2 are solved by a QR-based least-norm method~\cite{Boy08}.
Note, that Step~1a and Step~1b can be solved in parallel and that the QR decomposition of Step~1b can be recycled in Step~2 because of the chosen model structure.

While this method works well for rooted-tree pipe-networks,
it is not sufficient for cyclic networks with multiple supply nodes and non-pipe elements such as compressors.
In this case, the resulting state after a limited number of the above algorithm's iterations 
is used as an initial value for the first order IMEX integrator detailed in \cref{sec:imex1},
which time-steps until a steady-state is sufficiently approximated.
This approximate steady-state, associated to a fixed set of boundary values and parameters,
is used as initial value for the simulations:
\begin{align*}
 \begin{pmatrix} p_0 \\ q_0 \end{pmatrix} &= \begin{pmatrix} \bar{p}(\bar{s}_p,\bar{d}_q) \\ \bar{q}(\bar{s}_p,\bar{d}_q) \end{pmatrix}.
\end{align*}
While other time steppers are applicable,
the first order IMEX solver is related to the initial (two-step) algebraic approximation,
due to the synthesis of the linear/input/source and nonlinear/reaction terms.

\pagebreak 

\subsection{Port-Hamiltonian Structure}\label{sec:ph}
An interesting class of models are port-Hamiltonian systems,
which have \linebreak already been used for gas network modeling~\cite{morLilM17,morEggKLetal18}.
Such port-controlled-Hamiltonian models result from a system-theoretic approach to energy-based modeling,
and are square, passive, stable and feature certain symmetries, besides their physical interpretability~\cite{OrtVMetal01,BeaMXetal18}.
To exploit results from port-Hamiltonian theory in the context of data-driven model reduction,
we regiment the previous modeling approach into the port-Hamiltonian framework.

A linear input-state-output port-Hamiltonian model~\cite[Ch.~4]{VanJ14} has the form:
\begin{align}\label{eq:ph}
\begin{split}
 E \, \dot{x}(t) &= \overbrace{(J - R)\,\; Q}^A x(t) + \overbrace{(G - P)}^B \, u(t), \\
            y(t) &= \underbrace{(G + P)^\T Q}_C x(t),
\end{split}
\end{align}
with a symmetric positive definite mass matrix $E = E^\T$, $E > 0$,
a skew-symmetric energy flux $J = -J^\T$,
a symmetric, positive, semi-definite energy dissipation $R = R^\T$, $R \geq 0$,
a symmetric, positive definite energy storage $Q$, $Q > 0$,
resistive port matrix $P$ and control port matrix $G$\footnote{Typically, the symbol $B$ is used for this port matrix.}.

Here, we generalize the energy dissipation $R \in \R^{N \times N}$ to a nonlinear mapping $R : \R^N \to \R^{N \times N}$,
this means the linear constraints become~\cite{EggG20}:
\begin{align}
 R &= R^\T \;\; \rightarrow \;\; \langle R(x) x', x'' \rangle = \langle x', R(x) x'' \rangle, \quad \forall x, x', x'' \in \R^N \label{eq:res1}, \\
 R &\geq 0 \;\; \;\;\; \rightarrow \;\; \;\;\; \langle R(x) x', x' \rangle \geq 0, \quad \forall x, x' \in \R^N \label{eq:res2}.
\end{align}

With this set up, we test the two index-reduced gas network model discretizations presented in \cref{sec:endpoint} and \cref{sec:midpoint}
for compliance with the above port-Hamiltonian properties.
\begin{proposition}
 The endpoint discretization \eqref{eq:irendpoint} is a port-Hamiltonian model.
\end{proposition}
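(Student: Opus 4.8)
The plan is to realize \eqref{eq:irendpoint}, together with the output of \eqref{eq:iosys}, in the form \eqref{eq:ph} by exhibiting explicit $E$, $J$, $Q$, $G$, $P$ and a nonlinear $R(\cdot)$, with stacked state $x=(p,q^L)$, boundary input $u=(s_p,d_q)$, and output $y=(s_q,d_p)$. First I would fix the sign convention by multiplying both rows of \eqref{eq:irendpoint} by $-1$, which produces the candidate block-diagonal mass matrix with pressure block $E_p = -\mathcal{A}_{0,R} D_p^{-1} d_0\,\mathcal{A}_{0,R}^\T$ and flux block $E_q = -D_q^{-1}$. Since $D_{p,kk}^{-1} d_0 < 0$ and $D_{q,kk}^{-1} < 0$, both blocks are positive and $E=E^\T$. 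Positive definiteness of $E_q$ is immediate, whereas for $E_p$ it reduces to injectivity of $\mathcal{A}_{0,R}^\T$, i.e.\ full row rank of $\mathcal{A}_{0,R}$; I would deduce this from the connectivity-to-a-supply and supply-orientation hypotheses of \cref{sec:endpoint} that already force the tractability index $\tau=1$. This settles $E>0$.

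Setting the gravity term $D_g$ aside for the moment, the state-linear part of the sign-flipped system is block-off-diagonal, with $(p,q)$-block $\mathcal{A}_0$ and $(q,p)$-block $-\mathcal{A}_0^\T$, hence already skew-symmetric. I would therefore take $Q=I$ the identity (the simplest admissible energy-storage choice, so the Hamiltonian is $\tfrac12\|x\|^2$ and the conservative linear part equals $JQ=J$) and let $J$ be this skew-symmetric matrix. The friction reaction is then realized as $-R(x)\,Qx$ with the block-diagonal $R(x)=\diag(0,R_{qq}(x))$ whose $k$-th diagonal entry is $R_{qq}(x)_{kk} = -D_{q,kk}^{-1} D_{f,kk}\,\frac{|q_k^L|}{d_0\,(\mathcal{A}_{0,R}^\T p)_k}$. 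Because $-D_{q,kk}^{-1},\,D_{f,kk}>0$ and the endpoint pressures $(\mathcal{A}_{0,R}^\T p)_k$ are positive, every entry is nonnegative, so $R(x)$ is symmetric and positive semi-definite on the physical state region; the generalized constraints \eqref{eq:res1}--\eqref{eq:res2} then hold trivially for this diagonal, nonnegative $R$.

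It remains to match the ports. The input matrix of the sign-flipped system is $B$ with $(p,q)$-block $-\mathcal{B}_d$ and $(q,p)$-block $-\mathcal{B}_s^\T$, while \eqref{eq:iosys} fixes the output matrix $C$ through $C_{sq}=-\mathcal{B}_s$ and $C_{dp}=\mathcal{B}_d^\T$. I would solve $G-P=B$ and $(G+P)^\T Q = (G+P)^\T = C$ simultaneously, obtaining $G$ with single nonzero block $-\mathcal{B}_s^\T$ in position $(q,p)$ and $P$ with single nonzero block $\mathcal{B}_d$ in position $(p,q)$, and then verify directly that these reproduce both $B$ and $C$. Combined with the mass matrix and the $(J,Q,R)$ triple above, this certifies every structural requirement of \eqref{eq:ph}.

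The hard part will be the gravity contribution $D_g d_0\,\mathcal{A}_{0,R}^\T p$ in the $q^L$-row. It is linear in $p$, so it lands in the $(q,p)$-block of the effective state operator and breaks the skew-symmetry exploited above. I expect it cannot be repaired within the required structure: absorbing it into a constant skew $J$ forces, by skew-symmetry, a matching $-D_g d_0\,\mathcal{A}_{0,R}$ in the $(p,q)$-block, which is spurious since $\mathcal{A}_{0,R}\neq\mathcal{A}_0$; and absorbing it into $R$ fails because a symmetric matrix with only off-diagonal blocks is indefinite, so $R\geq 0$ forces that block to vanish. My resolution would be to invoke the model's own grouping: in \eqref{eq:iosys} the gravity term is folded, together with friction, into the parametric nonlinear retarding term $f_q$, whereas the linear field $A$ is parameter-independent and carries exactly the skew-symmetric $J$ above. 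For level networks ($h_k^R=h_k^L$, hence $D_g=0$) it vanishes identically and the realization is exact; otherwise I would record gravity only as a conservative, non-dissipative perturbation of the port-Hamiltonian structure.
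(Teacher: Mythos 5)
Your construction of $E$, $J$, the friction part of $R$, and the port matrices matches the paper's in substance, and in places is more careful: you address the sign convention (the paper silently treats $D_p^{-1}d_0$ and $D_q^{-1}$ as positive definite), you tie positive definiteness of the pressure block to full row rank of $\mathcal{A}_{0,R}$ via the index-$1$ hypotheses, and you actually derive $G$ and $P$ from $G-P=B$ and $(G+P)^\T Q=C$ rather than just exhibiting them. Taking $Q=I$ instead of the paper's unit-rescaling $Q$ is a harmless variant.

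The genuine gap is the gravity term, and your impossibility claim for it is wrong. You only consider absorbing $D_g d_0\,\mathcal{A}_{0,R}^\T p$ either into a constant skew $J$ or into a constant symmetric $R$ with an off-diagonal block, and correctly rule both out. But the paper's Section~\ref{sec:ph} deliberately generalizes the dissipation to a nonlinear map $R:\R^N\to\R^{N\times N}$ subject to \eqref{eq:res1}--\eqref{eq:res2}, and the proof exploits exactly this: since the gravity contribution is linear in $p$ but must act on the $q^L$-component, it is rewritten as a state-dependent \emph{diagonal} block,
\begin{align*}
 D_g \, d_0 \, \mathcal{A}_{0,R}^\T p \;=\; D_g \, \diag\!\Big(\frac{d_0\,\mathcal{A}_{0,R}^\T p}{q^L}\Big)\, q^L,
\end{align*}
which sits in the $(q,q)$-block of $R(x)$ alongside the friction term and trivially satisfies \eqref{eq:res1}. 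This closes the case $D_g\neq 0$ that you leave open, so your proof only establishes the proposition for level networks. (You could fairly object that these diagonal entries need not be nonnegative, so \eqref{eq:res2} is not obviously satisfied by the gravity block; the paper's proof verifies \eqref{eq:res2} only for the friction term and concedes in the subsequent remarks that treating gravity as dissipating is ``somewhat nonphysical.'' But the intended argument is the diagonal nonlinear absorption, not the restriction to $D_g=0$.)
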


\begin{proof} ~\\
We define the port-Hamiltonian state as $x := \begin{pmatrix} p & q^L \end{pmatrix}^\T$, which induces the remaining components.
The mass matrix
\begin{align*}
 E = \begin{bmatrix} (\mathcal{A}_{0,R} D_p^{-1} d_0(\theta) \, \mathcal{A}_{0,R}^\T) & 0 \\ 0 & D_q^{-1} \end{bmatrix} = E^\T > 0
\end{align*}
is symmetric positive definite, if its diagonal blocks are.
Given that the $D_*$ are diagonal, and thus symmetric, as well as positive definite, both blocks are symmetric positive definite.
The energy flux
\begin{align*}
 J = \begin{bmatrix} 0 & -\mathcal{A}_0 \\ \mathcal{A}_0^\T & 0 \end{bmatrix} = -J^\T
\end{align*}
is skew-symmetric by definition.
The energy dissipation (see $f_q$)
\begin{align*}
 R(x) := \begin{bmatrix} 0 & 0 \\ 0 & \big( D_g \, \diag\big(\frac{\scriptstyle d_0(\theta) \mathcal{A}_{0,R}^\T \, p}{\scriptstyle q^L}\big) + D_q^{-1} D_f \diag\big(\frac{\scriptstyle |q^L|}{\scriptstyle d_0(\theta) \mathcal{A}_{0,R}^\T \, p}\big)\big)  \end{bmatrix},
\end{align*}
with the $\diag : \R^N \to \R^{N \times N}$ operator mapping a vector $v$ to a diagonal matrix $D$ such that $v_k \mapsto D_{kk}$,
and element-wise (fraction) nonlinearities,
results in one non-zero diagonal block and thus fulfills \eqref{eq:res1}.
The condition \eqref{eq:res2} is fulfilled since in the friction term of $R$,
the absolute value of the mass-flux (numerator),
and the nodal pressure variable (denominator) are always non-negative.

Here, the energy storage represents the scale homogenization from \cref{sec:scal},
\begin{align*}
 Q = \begin{bmatrix} (10^5 \cdot I_{N_p}) & 0 \\ 0 & (10^{-5} \cdot I_{N_q}) \end{bmatrix} = Q^\T > 0
\end{align*}
which is a diagonal matrix of positive entries,
and due to same block structure in $E$ also fulfills $Q^\T E = E^\T Q$.
Lastly, the port matrix configuration
\begin{align*}
 P := \begin{bmatrix} 0 & 0 \\ B_S^\T & 0 \end{bmatrix}, \quad
 G := \begin{bmatrix} 0 & B_D \\ 0 & 0 \end{bmatrix},
\end{align*}
complies to the port-Hamiltonian form.
\end{proof}

Some remarks are in order on this result:
From the previous proof it is also immediately clear that the midpoint discretization cannot be a port-Hamiltonian model,
due to the input dependence of the energy dissipation.
Furthermore, this derivation tests if the endpoint discretization has the mathematical port-Hamiltonian structure,
but does not verify a physical energy-based model.

The somewhat nonphysical treatment of the gravity term as dissipating instead of storing (\cite{EggG20}),
is done with regard to the parametrization.
Including the parametric gravity term as a retarding or damping force,
and thus keeping the linear energy flux parameter-free, enables the previous steady-state computation.

Compressors, as modeled in \cref{sec:comp} can be included by an additional summand inside the energy dissipation component,
i.e. $\frac{F_C}{q}$, similar to the gravity term.
This exhibits an unphysical negative sign inside the dissipation, as a compressor introduces energy.
Furthermore, this compressor model requires to remove components from the $A_{qp}$ block of the system matrix \eqref{eq:comp},
and thus perturbs the skew-symmetry of $J$.

Lastly, this notation for the dissipation can also be used for linearization,
by constraining the argument of $R$ to the steady state~$\bar{x}$,
\begin{align*}
 \widetilde{R} := R(\bar{x}) \approx R(x).
\end{align*}

Given the port-Hamiltonian model with a nonlinear resistive term,
an (approximate) adjoint system can be derived by treating $R$ as its image -- a diagonal matrix.
Transposing the (primal) port-Hamiltonian system's \eqref{eq:ph} transfer function $h(s) = (G + P)^\T Q (E s - (J - R) Q)^{-1} (G - P)$,
and exploiting the system components properties, yields the dual system:
\begin{align}\label{eq:dual}
\begin{split}
 E \, \dot{x}(t) &= \overbrace{Q \big(-J - R\big(x(t)\big)\big)}^{A^\T} x(t) + \overbrace{Q (G + P)}^{C^\T} u(t), \\
            y(t) &= \underbrace{(G - P)^\T}_{B^\T} x(t).
\end{split}
\end{align}
Hence, for the (nonlinear) endpoint discretization, its observability can be (approximately) quantified by the dual system's reachability, as for linear systems.
Conceptually, this could also be done with the midpoint discretization, as it supplies the same model components.
However, it has no theoretical justification, as a dual system may not be accessible for (general) nonlinear systems.

\pagebreak 


\section{Model Order Reduction for Gas Networks}\label{sec:reduction}
In this section, we summarize the principal approach behind all presented model reduction methods that are extended and tested in this work.
The structure of the model laid out in \cref{sec:model} is given by~\eqref{eq:iosys}.
For large (expansive) networks, the differential equations in $p$ and $q$ become high dimensional,
which impedes their solution \cite[Sec.~7]{GugH20} and hence repeated simulations of scenarios.
The aim of model reduction is to reduce the dimensionality of the differential equations,
by computing subspaces of the phase space on which the trajectories evolve suitably similar (with regard to the quantities of interest).
Furthermore, the reduced order model shall have the same form as the original model,
and since two physical quantities are (bi-directionally) coupled in this system,
the model reduction for interconnected systems~\cite{morSanM09} approach is used, yielding reduced operators for each subsystem:
\begin{align*}\!\!
 \setlength\arraycolsep{0pt}\begin{pmatrix} \!\widetilde{E}_p(\theta) & 0 \\ 0 & \widetilde{E}_q \end{pmatrix} \!\! \begin{pmatrix} \dot{\tilde{p}} \\ \dot{\tilde{q}} \end{pmatrix} &\!\!=\!\!
 \setlength\arraycolsep{0pt}\begin{pmatrix} 0 & \widetilde{A}_{pq}\!\! \\ \!\widetilde{A}_{qp} & 0 \end{pmatrix} \!\! \begin{pmatrix} \bar{p} \splus \tilde{p} \\ \bar{q} \splus \tilde{q} \end{pmatrix} \!\!+\!\!
 \setlength\arraycolsep{0pt}\begin{pmatrix} 0 & \widetilde{B}_{pd}\!\! \\ \!\widetilde{B}_{qs} & 0 \end{pmatrix} \!\! \begin{pmatrix} s_p \\ d_q \end{pmatrix} \!\!+\!\!
 \begin{pmatrix} 0 \\ \widetilde{F}_c \end{pmatrix} \!\!+\!\! \begin{pmatrix} 0 \\ \tilde{f}_q(\bar{p} \splus \tilde{p},\bar{q} \splus \tilde{q},\!s_p,\!\theta)\!\!\end{pmatrix}\!\!, \\
 \begin{pmatrix} s_q \\ d_p \end{pmatrix} \approx
 \begin{pmatrix} \tilde{s}_q \\ \tilde{d}_p \end{pmatrix} &\!\!=\!\!
 \setlength\arraycolsep{0pt}\begin{pmatrix} 0 & \widetilde{C}_{sq}\! \\ \!\widetilde{C}_{dp} & 0 \end{pmatrix} \!\! \begin{pmatrix} \bar{p} \splus \tilde{p} \\ \bar{q} \splus \tilde{q} \end{pmatrix}, \\
 \begin{pmatrix} \tilde{p}_0 \\ \tilde{q}_0 \end{pmatrix} &\!\!=\!\! \begin{pmatrix} 0 \\ 0 \end{pmatrix},
\end{align*}
centered around the steady-state $\begin{pmatrix} \bar{p} & \bar{q}\end{pmatrix}^\T$.
This structure preserving model order reduction was already used in~\cite{morGruJHetal14,morBenBG18} in the context of model reduction for gas networks,
while the centering has been used in~\cite{AzeJ07} for gas network simulation and in~\cite{morHim17} for nonlinear model order reduction.
In the following, the general ansatz to obtain these reduced quantities (denoted by $\tilde{\,\cdot\,}$) is summarized.

\subsection{Projection-Based Model Reduction}
The reduced order model is computed by projecting the high-dimensional dynamics evolving in the (coupled) pressure and mass-flux phase spaces (of dimension $N_p$ and $N_q$)
to low(er)-dimensional subspaces (of dimension $n_p$ and $n_q$),
which capture the principal components of the respective trajectories.
Given suitable discrete projection mappings from the original space to the reduced space $V_*^\T$ and mappings from the reduced space back to the original space $U_*$:
\begin{align*}
 U_p : \R^{n_p} \to \R^{N_p}, \quad V_p^\T : \R^{N_p} \to \R^{n_p}: \quad V_p^\T \cdot U_p = \id_{n_p}, \\
 U_q : \R^{n_q} \to \R^{N_q}, \quad V_q^\T : \R^{N_q} \to \R^{n_q}: \quad V_q^\T \cdot U_q = \id_{n_q}.
\end{align*}
Thus, the reduced trajectory results from applying $V_*$ to the original trajectory's steady-state deviation,
while the original trajectory is approximately recovered by applying $U_*$ to the reduced trajectory:
\begin{align*}
 \begin{pmatrix} \tilde{p} \\ \tilde{q} \end{pmatrix} \coloneqq \begin{pmatrix} V_p^\T \, (p - \bar{p}) \\ V_q^\T \, (q - \bar{q}) \end{pmatrix} \to
 \begin{pmatrix} \bar{p} + U_p \, \tilde{p} \\ \bar{q} + U_q \, \tilde{q} \end{pmatrix} \approx \begin{pmatrix} p \\ q \end{pmatrix};
\end{align*}
the initial condition is also reduced by application of $V_*$.
Similarly, the components of the reduced system result from applying the $U_*$ map to the argument of the respective operators,
and the $V_*$ map to the result of the operation.

For the linear operators, the matrices $E_*$, $A_*$, $B_*$ and $C_*$ and the vector $F_c$,
this leads conveniently to pre-computable reduced matrices and vector respectively,
\begin{align*}
 \widetilde{A}_{pq} &\coloneqq V_p^\T \cdot A_{pq} \cdot U_q \in \R^{n_p \times n_q}, \: && \widetilde{A}_{qp} \coloneqq V_q^\T \cdot A_{qp} \cdot U_p \in \R^{n_q \times n_p}, \\
 \widetilde{B}_{pd} &\coloneqq V_p^\T \cdot B_{pd} \in \R^{n_p \times N_s}, \: && \widetilde{B}_{qs} \coloneqq V_q^\T \cdot B_{qs} \in \R^{n_q \times N_d}, \\
 \widetilde{C}_{dp} &\coloneqq C_{dp} \cdot U_p \in \R^{N_d \times n_p}, \: && \widetilde{C}_{sq} \coloneqq C_{sq} \cdot U_q \in \R^{N_s \times n_q}, \\
 \widetilde{E}_p(\theta) &\coloneqq V_p^\T \cdot E_p(\theta) \cdot U_p \in \R^{n_p \times n_p}, && \widetilde{E}_q \coloneqq V_q^\T \cdot E_q \cdot U_q \in \R^{n_q \times n_q}, \\
 \widetilde{F}_c &\coloneqq V_q^\T \cdot F_c \in \R^{n_q},
\end{align*}
yet, the nonlinear component $\tilde{f}_q$ remains a composition operation:
\begin{align}\label{eq:nrom}
\tilde{f}_q &\coloneqq V_q^\T \cdot f_q(\bar{p} + U_p \, \tilde{p} \,, \bar{q} + U_q \, \tilde{q} \,,s_p \,,\theta) : \R^{n_p} \times \R^{n_q} \times \R^{N_s} \times \R^2 \to \R^{n_q}.
\end{align}

\subsection{Structure Preserving Model Order Reduction}
In this specific context, the term structure preserving model order reduction (SPMOR) has two meanings:
first and foremost, SPMOR refers to the separate reduction of the state components, as above in the case of gas networks,
the individual reduction of the discretized pressure $p$ and mass-flux $q$ variables.
Second, SPMOR can also refer to preserving the port-Hamiltonian form \eqref{eq:ph}.
For projection-based model reduction, the former is generally ensured by separate projectors \cite{morFre04} (or an overall block diagonal projection).
The latter is guaranteed by using Galerkin projections,
which implies stability preservation \cite{morBeaG11}, given a port-Hamiltonian full order model.
Both SPMOR interpretations are jointly fulfilled if a block-diagonal (w.r.t. $p$ and $q$) Galerkin projection is used.

\subsection{The Lifting Bottleneck and Hyper-Reduction}\label{sec:hypred}
The gas network models considered for reduction are nonlinear (and potentially non-smooth),
hence the reduced order nonlinear part $\tilde{f}_q$ involves lifting the reduced state up to the original high-dimensional space,
evaluating the nonlinearity and projecting the result back down to the reduced low-dimensional space~\eqref{eq:nrom}.
As the high-dimensional space is involved, this is typically computationally demanding and may eat up the gains from the reduction of the linear part.
To mitigate this so-called lifting bottleneck, hyper-reduction methods can be employed, which construct low-dimensional surrogates for nonlinearities.

In this work we discard (or rather defer) hyper-reduction due to the following reasoning:
The purpose of this work is to determine which method constructs the best reduced order models,
a hyper-reduction may inhibit comparability due to, for example, a dominating hyper-reduction approximation error.
Second, various hyper-reduction methods for this setting are applicable (i.e. DEIM~\cite{morChaS10}, Q-DEIM~\cite{morDrmG16}, DMD~\cite{morWilSK13} or numerical linearization~\cite{morMoo79}),
which may interact differently with the different model reduction methods.
So as a first step, the bare model reduction methods are tested here (this means: which method's linear subspaces capture the nonlinear dynamics best),
at a later stage the best hyper-reduction method can then be determined.
Lastly we note, the nonlinear part of the vector field consists exclusively of element-wise operations (see \cref{sec:ph}),
a system with repeated scalar nonlinearities (SRSN)~\cite{morChuG99}, which are less difficult to handle due to ``locality'' of the nonlinearity,
and hence, its vectorization.

\subsection{Parametric Model Reduction}\label{sec:pmor}
There are two common approaches for projection-based parametric model order reduction: averaging and accumulating~\cite{morHim21}.
For the selected data-driven methods, \emph{averaging} means that for a set of parameter samples the associated trajectories or derived quantities (such as the utilized system Gramians) are averaged,
while \emph{accumulating} refers to the concatenation of trajectories or derived quantities (such as the projectors).

Generally, each of the structure-preserving model order reduction methods in \cref{sec:methods} can be used with either,
we opted to use the averaging ansatz for all of the following methods since their computation is without exception based on parametric empirical Gramians~\cite{morHimO15a}.

\section{Model Reduction Methods}\label{sec:methods}
In this section, we briefly summarize the employed model reduction methods from a practical point of view.
For theoretical details and backgrounds we refer to the relevant works, cited in the respective subsections.
Due to the non-differentiable nonlinearity (friction), the sought projections $U_*$, $V_*$ for all tested model reduction techniques are constructed from (transformed) time-domain trajectory data obtained from numerical simulations,
which is given by discrete-time snapshots of the internal pressure nodes $\widehat{X}_p$ and mass-flux edges~$\widehat{X}_q$,
\begin{align*}
 \widehat{X}_p(t;\theta_k) &= \begin{bmatrix} \; p^1(t;\theta_k) & \dots & p^{N_s + N_d}(t;\theta_k) \;\;\end{bmatrix}, \\
 \widehat{X}_q(t;\theta_k) &= \begin{bmatrix} \; q^1(t;\theta_k) & \dots & q^{N_s + N_d}(t;\theta_k) \;\;\end{bmatrix},
\end{align*}
the external demand node pressure~$\widehat{Y}_p$ and supply node mass-flux~$\widehat{Y}_q$,
\begin{align*}
 \widehat{Y}_p(t;\theta_k) &= \begin{bmatrix} \begin{pmatrix} s_q^1(t;\theta_k) \\ d_p^1(t;\theta_k) \end{pmatrix} & \dots & \begin{pmatrix} s_q^{N_p}(t;\theta_k) \\ d_p^{N_p}(t;\theta_k) \end{pmatrix} \end{bmatrix}, \\
 \widehat{Y}_q(t;\theta_k) &= \begin{bmatrix} \begin{pmatrix} s_q^1(t;\theta_k) \\ d_p^1(t;\theta_k) \end{pmatrix} & \dots & \begin{pmatrix} s_q^{N_q}(t;\theta_k) \\ d_p^{N_q}(t;\theta_k) \end{pmatrix} \end{bmatrix},
\end{align*}
as well as dual state components $\widehat{Z}_p$ and $\widehat{Z}_q$, if available,
\begin{align*}
 \widehat{Z}_p(t;\theta_k) &= \begin{bmatrix} \; \mathfrak{p}^1(t;\theta_k) & \dots & \mathfrak{p}^{N_s + N_d}(t;\theta_k) \;\;\end{bmatrix}, \\
 \widehat{Z}_q(t;\theta_k) &= \begin{bmatrix} \; \mathfrak{q}^1(t;\theta_k) & \dots & \mathfrak{q}^{N_s + N_d}(t;\theta_k) \;\;\end{bmatrix}.
\end{align*}
The state-space trajectories $\widehat{X}_*(t;\theta_k)$, $\widehat{Z}_*(t;\theta_k)$ are obtained for perturbations of the inputs,
pressure at the $N_s$ supply boundary nodes and mass-flux at the $N_d$ demand boundary nodes,
while the output trajectories $\widehat{Y}_*(t;\theta_k)$ are computed for perturbations in the respective $N_*$ steady-state components.
Using the dual state trajectories is significantly faster than output trajectories,
as computing observability as dual reachability scales, as for the primal reachability, with the number of ports,
instead of scaling with the number of internal states.
The training parameters $\theta_k$ are sampled from a sparse grid spanning the parameter space $\Theta \subset \R^2$.

\pagebreak

All methods are prefixed ``Structured'', since the model structure of a pressure and mass-flux variable is preserved in the reduced order model.
Practically, this means while pressure and mass-flux trajectories are computed simultaneously due to their coupling,
the individual projectors for pressure and mass-flux are constructed separately.

The subsequent methods may not have been previously introduced explicitly in structured form,
yet given~\cite{morVanV08,morSanM09,morAal10} introducing structured Gramians, these are trivial extensions.
For ease of notation, we describe the computation of projectors $\{U_p, V_p\}$ and $\{U_q, V_q\}$ generically as~$\{U_*, V_*\}$.

We implemented a total of thirteen model reduction method variants, which we compare in this work.
All tested methods are data-driven and time-domain focused, as the dynamic gas network model~\eqref{eq:iosys} is nonlinear.
Furthermore, all model reduction methods construct linear subspaces and are derived from methods for linear systems,
yet differ from plain linearization:
instead, the implemented methods assemble linear subspaces of the model's phase space that, in a method specific sense, approximately enclosing the relevant nonlinear system evolution.
Moreover, all methods are SVD-based~\cite{morAnt05b}, and their majority is originally based on (empirical) system Gramian matrices,
for details see~\cite{morHim18b}.

We highlight here that the time horizon for the training data is significantly shorter than for the actual simulations the reduced order model is targeted at.
Furthermore, generic training inputs (transient boundary values), such as impulse, step or random signals, are utilized to avoid a \emph{model reduction crime}~\cite{morHim21} (comparable to an inverse crime):
\textit{Test} a reduced order model using the \textit{training} parameters or inputs.

\subsection{Empirical System Gramians}
All model reduction methods currently included in \morgen{} are computationally realized using empirical system Gramian matrices,
which are system-theoretic operators encoding reachability and observability.
From these, information on importance of linear combinations of states can be extracted.
For linear systems, these system Gramians are typically computed via matrix equations,
for general nonlinear systems there is no \emph{feasible} closed form.
However, the empirical system Gramians approximate the nonlinear Gramians based on state and output trajectory data.
In case of a port-Hamiltonian (nonlinear) systems, the approximate dual system \eqref{eq:dual} enables substituting expensive state by port perturbations, and thus severely reduce computation times.
Empirical Gramians are described in-depth in~\cite{morHim18b}; following only a brief summary is given.

\subsubsection{Empirical Reachability Gramian Matrix}
Reachability quantifies how well a system can be driven by the inputs,
which is encoded by the reachability Gramian.
The empirical reachability Gramian is an approximation based on state trajectory data~\cite{morLalMG99}:
\begin{align}\label{eq:wc}
 \widehat{W}_{R,*} \coloneqq \sum_{k=1}^{K} \sum_{m=1}^{N_s + N_d} \int_0^\infty \widehat{X}^m_*(t;\theta_k) \, \widehat{X}^m_*(t;\theta_k)^\T \D t \in \R^{N_* \times N_*}.
\end{align}
Given a suitable set of input perturbations, $\widehat{W}_{R,*}$ approximates the nonlinear reachability Gramian near a steady-state~\cite{morHahE02}.

\pagebreak

\subsubsection{Empirical Observability Gramian Matrix}
Observability quantifies how well the state can be characterized from the outputs,
which is encoded by the observability Gramian.
The empirical observability Gramian is an approximation based on output trajectory data~\cite{morLalMG99}:
\begin{align}\label{eq:wo}
 \widehat{W}_{O,*} \coloneqq \sum_{k=1}^{K} \int_0^\infty \widehat{Y}_*(t;\theta_k)^\T \, \widehat{Y}_*(t;\theta_k) \D t \in \R^{N_* \times N_*}.
\end{align}
For a suitable set of steady-state perturbations, $\widehat{W}_{O,*}$ approximates the nonlinear observability Gramian near a steady-state~\cite{morHahE02}.

Given the port-Hamiltonian structure of a discretization,
the empirical reachability Gramian of the dual system \eqref{eq:dual} can be used to compute the empirical observability Gramian \cite[Sec.~2]{morHim21} with $\widehat{Z}^m_*(t;\theta_k)$ instead of $\widehat{X}^m_*(t;\theta_k)$.

\subsubsection{Empirical Cross Gramian Matrix}
The empirical cross Gramian concurrently encodes reachability and observability,
which in conjunction quantifies redundancy also known as minimality;
however, the (empirical) cross Gramian is only applicable for square systems,
systems with the same number of inputs and outputs (which the gas network model~\eqref{eq:iosys} fortunately is),
and an approximation is based on simulated state and output trajectory data~\cite{morHimO14}:
\begin{align}\label{eq:wx}
 \widehat{W}_{X,*} \coloneqq \sum_{k=1}^{K} \sum_{m=1}^{N_s + N_d} \int_0^\infty \widehat{X}^m_*(t;\theta_k) \,\, \widehat{Y}_*(t;\theta_k) \D t \in \R^{N_* \times N_*}.
\end{align}
For a suitable set of input and steady-state perturbations, $\widehat{W}_{X,*}$ approximates the nonlinear cross Gramian near a steady-state~\cite{morHimO14}.

Given the port-Hamiltonian structure of a discretization,
the linear empirical cross Gramian exploiting the dual system \eqref{eq:dual} can be used to compute the empirical cross Gramian \cite{morBauBHetal17} with $\widehat{Z}^m_*(t;\theta_k)^\T$ instead of $\widehat{Y}_*(t;\theta_k)$.

\subsubsection{Empirical Non-Symmetric Cross Gramian Matrix}
A generalization of the empirical cross Gramian for non-square systems is the empirical non-symmetric cross Gramian, 
which is an approximation based on simulated state and (averaged) output trajectory data~\cite{morHimO16}:
\begin{align}\label{eq:wz}
 \widehat{W}_{Z,*} \coloneqq \sum_{k=1}^{K} \sum_{m=1}^{N_s + N_d} \sum_{q=1}^{N_s + N_d} \int_0^\infty \widehat{X}^m_*(t;\theta_k) \,\, \widehat{Y}^q_*(t;\theta_k) \D t \in \R^{N_* \times N_*}.
\end{align}
For a suitable set of input and steady-state perturbations, $\widehat{W}_{Z,*}$ approximates the nonlinear cross Gramian near a steady-state.
Even though the gas network model~\eqref{eq:iosys} is square, the empirical non-symmetric cross Gramian is included here,
since, heuristically, it could provide better results than the regular cross Gramian~\cite{morHimO16}.
Furthermore, an empirical non-symmetric linear cross Gramian is computable by similarly averaging over the dual states and replacing $\widehat{Y}^q_*(t;\theta_k)$ by $\widehat{Z}^q_*(t;\theta_k)^\T$.

\subsection{Structured Proper Orthogonal Decomposition}\label{sec:pod}
Proper orthogonal decomposition (POD) is a basic data-driven method for model reduction:
given a matrix of state snapshots over time,
the dominant left singular vectors are computed as a basis via a singular value decomposition (SVD)~\cite{morMoo78}.
The basis vectors are assigned their principality with respect to the conveyed energy by the associated (relative) singular value magnitude.

In the context of this work, the POD is constructed from a system-theoretic point of view,
that connects to the system property of reachability.
Due to the overall structured approach to model reduction,
a structured POD refers in this context to the separate PODs for pressure and mass-flux variables $p$ and $q$ as in~\cite{morGruJHetal14,morGruHR16,morBenBG18}.

\subsubsection{Reachability-Gramian-Based}
The singular vectors to the principal singular values of the empirical reachability Gramian~\eqref{eq:wc} correspond to the POD modes.
To obtain a reduced order model, first, a truncated SVD (tSVD) of the empirical reachability Gramian,
\begin{align*}
 W_{R,*} \stackrel{\tsvd}{=} U_{R,*} D_{R,*} U_{R,*}^\T,
\end{align*}
reveals the principal subspace of the respective trajectories $\widehat{X}_*^m(t)$,
whereas the importance of each basis vector (column) in $U_{R,*}$ is determined by (the square-root of) the associated singular value $\sigma_i := D_{R,*\,ii}^{1/2}$:
\begin{align*}
 U_* := U_{R,*}.
\end{align*}
The matrix of basis vectors (POD modes) constitutes a Galerkin projection \linebreak $V_* := U_*$.
Notably, (structured) POD only considers the input-to-state mapping, not the state-to-output mapping,
and hence approximates the state variables, $p$ and $q$, not the output quantities of interest $s_q$ and $d_p$.

The POD could also be computed directly from an SVD of the trajectory data,
yet the computational overhead of using the empirical reachability Gramian is small compared to the trajectory simulation runtimes,
and the systematic perturbations of the empirical Gramian approach~\cite{morHim18b} are exploited.

\subsection{Structured Empirical Dominant Subspaces}\label{sec:eds}
The previous (structured) POD method considers only the reachability information,
hence the data only reflects the input-to-state mappings,
and thus the POD derived ROMs (Reduced Order Model) approximate the state variables $p$ and $q$.
To approximate the outputs $s_q$ and $d_p$,
the state-to-output mappings, encoding observability information, need to be considered, too.

The (empirical) dominant subspaces method initially developed in~\cite{morPen06},
and originally named DSPMR (Dominant Subspace Projection Model Reduction),
conjoins and compresses the dominant reachability and observability subspaces
of an input-output system, such as the gas network model~\eqref{eq:iosys}, obtained from (empirical) system Gramians.
Heuristically, this method seems to be useful for hyperbolic input-output systems~\cite{morGruHS19}.
Here, we consider three variants: first, based on the empirical reachability and observability Gramians,
second, based on the empirical cross Gramian and third, based on the empirical non-symmetric cross Gramian.

\subsubsection{Reachability- and Observability-Gramian-Based}
The singular vectors associated to the principal singular values of the empirical reachability and observability Gramians
span these dominant subspaces, which are first extracted by tSVDs,
and then, after concatenation, compressed by orthogonalization via another tSVD:
\begin{align*}
 \widehat{W}_{R,*} &\stackrel{\tsvd}{=} U_{R,*} D_{R,*} V_{R,*}^\T, \\
 \widehat{W}_{O,*} &\stackrel{\tsvd}{=} U_{O,*} D_{O,*} V_{O,*}^\T, \\
 \begin{bmatrix} (\omega_R U_{R,*} D_{R,*}) & (\omega_O U_{O,*} D_{O,*}) \end{bmatrix} &\stackrel{\tsvd}{=} U_{RO,*} D_{RO,*} V_{RO,*},
\end{align*}
from which the singular vectors $V_* = U_* := U_{RO,*}$ make up a Galerkin projection.
The weights $\omega_R := \|\widehat{W}_{R,*}\|_F^{-1}$,
and $\omega_O := \|\widehat{W}_{O,*}\|_F^{-1}$ equilibrate the potentially mismatched scales of the respective Gramians,
akin to the \textit{refined DSPMR} method from~\cite{morPen06}.

\subsubsection{Cross-Gramian-Based}
A truncated SVD of the cross Gramian also engenders the sought dominant subspaces~\cite{morBenH19}:
The (empirical) cross Gramian's left and right singular vectors (approximately) span the reachability and observability subspaces, respectively,
and their orthogonalized concatenation, via a truncated SVD,
\begin{align*}
 \widehat{W}_{X,*} &\stackrel{\tsvd}{=} U_{X,*} D_{X,*} V_{X,*}^\T, \\
 \begin{bmatrix} (U_{X,*} D_{X,*}) & (V_{X,*} D_{X,*}) \end{bmatrix} &\stackrel{\tsvd}{=} U_{RO,*} D_{RO,*} V_{RO,*},
\end{align*}
yields a Galerkin projection $V_* = U_* := U_{RO,*}$.
The cross-Gramian-based variant does not need to additionally weight the reachability subspace $U_{X,*}$ and observability subspace $V_{X,*}$,
as they are both extracted from the same matrix.

This cross-Gramian-based empirical dominant subspaces method seamlessly extends to the empirical non-symmetric cross Gramian $\widehat{W}_{Z,*}$.

\subsection{Structured Empirical Balanced Truncation}\label{sec:ebt}
The dominant subspaces method combines separately quantified input-to-state and state-to-output energies,
but not the actual input-to-output energy.
Such can be accomplished by balanced truncation and based on the Hankel operator,
which maps past inputs to future outputs.
This operator's singular values measure the sought input-to-output energy,
and the singular vectors constitute a basis.
To obtain the Hankel operator's truncated SVD, first, the underlying system needs to be balanced,
and then singular vectors associated to small magnitude Hankel singular values are truncated.

Balanced truncation is the reference model reduction method for \emph{linear} input-output systems,
due to stability preservation in the ROMs and a computable error bound.
For (control-affine) nonlinear input-output systems, such as the gas network model~\eqref{eq:iosys},
balanced truncation can be generalized to \emph{empirical balanced truncation}~\cite{morLalMG99},
while a structured variant is introduced as \emph{interconnected system balanced truncation}~\cite{morVanV08},
which we combine.

Again, we consider three variants: First, based on the empirical reachability and observability Gramian,
second, based on the empirical cross Gramian and third, on the empirical non-symmetric cross Gramian;
additionally, the reachability and observability-based balanced POD variant is included.

\subsubsection{Reachability- and Observability-Gramian-Based}
The original balanced truncation method is based on the reachability and observability Gramians~\cite{morMoo81}.
A transformation into a balanced coordinate system in which both system Gramians are diagonal and equal is obtained via simultaneous diagonalization.
Various balancing algorithms are available for this task~\cite{morVar91b},
in this setting we selected the general balancing algorithm~\cite{morSafC88a,morSafC89},
which utilizes the magnitude-based truncated eigenvalue-decomposition (tEVD) of the Gramians:
The matrices $U_*$, $V_*$ constitute a Petrov-Galerkin projection,
whereas the importance of each column is determined by the approximate Hankel singular values, the (diagonal) elements of $D_{B,*}$:
\begin{align}\label{eq:btbal}
\begin{split}
 \widehat{W}_{R,*} \widehat{W}_{O,*} T_{R,*} &\stackrel{\tevd}{=} \Lambda_{R,*} T_{R,*}, \\
 \big(\widehat{W}_{R,*} \widehat{W}_{O,*}\big)^\T T_{O,*} &\stackrel{\tevd}{=} \Lambda_{O,*} T_{O,*}, \\
 T_{O,*}^\T T_{R,*} &\stackrel{\tsvd}{=} U_{B,*} D_{B,*} V_{B,*}, \\
 U_* &:= T_{R,*} V_{B,*} D_{B,*}^{-\frac{1}{2}}, \\
 V_* &:= T_{O,*} U_{B,*} D_{B,*}^{-\frac{1}{2}}.
\end{split}
\end{align}

\subsubsection{Cross-Gramian-Based}
For linear, symmetric systems, alternatively a balanced and truncated reduced order model can be computed via the cross Gramian.
Yet, the gas network model~\eqref{eq:iosys} is neither linear nor symmetric, not even in a nonlinear sense of symmetric systems, i.e. gradient systems~\cite{morIonFS11},
but the considered input-output system is square.
Hence, an empirical cross-Gramian-based reduced model is computable,
but it will differ from the reduced model obtained by reachability- and observability-Gramian-based balanced truncation.

Given a cross Gramian with full rank, an (approximate) balancing projection is computable in a similar manner as for the (empirical) balanced truncation~\eqref{eq:btbal},
but based on the left and right eigen spaces of the cross Gramian~\cite{morJiaQY19}:
\begin{align}\label{eq:wxbal}
\begin{split}
 \widehat{W}_{X,*} T_{R,*} &\stackrel{\tevd}{=} \Lambda_* T_{R,*} \\
 \widehat{W}_{X,*}^\T T_{O,*} &\stackrel{\tevd}{=} \Lambda_* T_{O,*} \\
 T_{O,*}^\T T_{R,*} &\stackrel{\tsvd}{=} U_{X,*} D_{X,*} V_{X,*}, \\
 U_* &:= T_{R,*} V_{X,*} D_{X,*}^{-\frac{1}{2}}, \\
 V_* &:= T_{O,*} U_{X,*} D_{X,*}^{-\frac{1}{2}}.
\end{split}
\end{align}
The matrices $U_*$, $V_*$ again constitute a Petrov-Galerkin projection,
and the importance of each column is determined by the absolute value of the (diagonal) elements of $D_{X,*}$,
which are only equal to the Hankel singular values for linear and symmetric systems.

\pagebreak

As for the empirical dominant subspaces method,
the cross-Gramian-based empirical balanced truncation variant directly extends to the non-symmetric cross Gramian $\widehat{W}_{Z,*}$.

One could assume that if only one system Gramian has to be computed,
instead of two for dominant subspaces or balanced truncation, that the cross-Gramian-based computation is significantly faster,
but the overall number of simulated trajectories is the same for both methods, which causes the dominant fraction of computational cost.
Thus, the empirical cross Gramian computation is merely somewhat quicker than the computation of both empirical reachability and observability Gramians.

\subsubsection{Structured Balanced POD}
Instead of balancing the system using the product of the (empirical) reachability and observability Gramians,
the dominant subspaces of the respective Gramians can be used to approximately balance the system.
This variant of reachability- and observability-Gramian-based balanced truncation is called balanced POD~\cite{morWilP02},
and the approximate balancing algorithm reads:
\begin{align}\label{eq:bpod}
\begin{split}
 \widehat{W}_{R,*} T_{R,*} &\stackrel{\tevd}{=} \Lambda_{R,*} T_{R,*}, \\
 \widehat{W}_{O,*} T_{O,*} &\stackrel{\tevd}{=} \Lambda_{O,*} T_{O,*}, \\
 T_{O,*}^\T T_{R,*} &\stackrel{\tsvd}{=} U_{B,*} D_{B,*} V_{B,*}, \\
 U_* &:= T_{R,*} V_{B,*} D_{B,*}^{-\frac{1}{2}}, \\
 V_* &:= T_{O,*} U_{B,*} D_{B,*}^{-\frac{1}{2}},
\end{split}
\end{align}
with the matrices $U_*$, $V_*$ inducing a Petrov-Galerkin projection.

Here, we categorized balanced POD as a variant of balanced truncation,
due the algorithmic similarity to the balancing algorithm~\eqref{eq:btbal}.
Alternatively, we could have classified balanced POD as a variant of POD,
since it can be described as POD with the observability Gramian defining the POD's inner product~\cite{morRow05}.

We leave it to the reader to test other balancing algorithms, e.g.~\cite{morVar91b,morBen09},
while we excluded the modified POD~\cite{morOrSK12,morHim21},
as it is not a Petrov-Galerkin method.

\subsection{Structured Empirical Balanced Gains}\label{sec:ebg}
The empirical balanced gains method is a variant of the empirical balanced truncation method~\cite{morDav86,morHim21}:
While balanced truncation selects principality of subspaces based on the Hankel singular value magnitude,
balanced gains \cite{morDav86 sorts the balanced basis vectors} by the impulse response norm:
\begin{align*} 
 \|s_q\|_2^2 &\approx \tr(C_{sq} \widehat{W}_{R,q} C_{sq}^\T) \approx \tr(B_{qs}^\T \widehat{W}_{O,q} B_{qs})
           \approx    |\tr(C_{sq} \widehat{W}_{X,q} B_{qs})|, \\[2ex]
 \|d_p\|_2^2 &\approx \tr(C_{dp} \widehat{W}_{R,p} C_{dp}^\T) \approx \tr(B_{pd}^\T \widehat{W}_{O,p} B_{pd})
           \approx    |\tr(C_{dp} \widehat{W}_{X,p} B_{pd})|,
\end{align*}
for a system in \emph{balanced} form.
Hence, either method, balanced truncation and balanced gains compute the same balancing transformation,
via~\eqref{eq:btbal} or~\eqref{eq:wxbal}, but the sequence of basis vectors differs due to the variant measures.

We note, that due to the linear input and output operators,
the balanced gains method can be directly applied for the nonlinear gas network models~\eqref{eq:iosys}.

\pagebreak

\subsubsection{Structured Goal-Oriented Proper Orthogonal Decomposition}\label{sec:gopod}
Similar to the balanced gains method, (structured) POD basis vectors can also be sorted,
instead of by their singular value magnitude $\sigma_k$,
in terms of their impulse response, akin to the simplified balanced gains in \cite{morDav86}, by the index $d_k$
\begin{align*}
 d_k := \tilde{c}_k \tilde{c}_k^\T \sigma_k,
\end{align*}
for rows $\tilde{c}_k$ of the POD-transformed output matrix $\widetilde{C}$.
Given the reachability-based POD,
this variant is related to the concept of output-reachability and $H_2$-norm model reduction \cite{morPolv10}.

\subsection{Structured Dynamic Mode Decomposition - Galerkin}\label{sec:dmd}
In addition to the four energy-based method classes (Sections~\ref{sec:pod}, \ref{sec:eds}, \ref{sec:ebt}, \ref{sec:ebg}),
also an alternative method from~\cite{morAllK17}, based on system identification, is investigated.
Yet, it is adapted as a structured projection-based model reduction method.
After summarizing the parent system identification method, the derived model reduction technique is presented.

\subsubsection{Dynamic Mode Decomposition}
Dynamic mode decomposition (DMD) identifies a discrete-time operator from (discrete) trajectory time-series data,
preserving certain modal behavior~\cite{morRowMBetal09}.
This system identification method is based on the Koopman operator, which is an infinite dimensional, but linear operator,
mapping (a transformation, or observable of the) state $x^k$ at discrete-time step $k$ to $(k+1)$.
DMD yields a (linear) finite-dimensional approximation of the Koopman operator preserving its dominant eigenmodes.
Here, using the identity observable and given time series data \linebreak $X = \begin{bmatrix} x^0 & \dots & x^K \end{bmatrix}$,
DMD identifies an operator $\widehat{A}$, via least-squares:
\begin{subequations}
\begin{align}
 x^{k+1} &\approx \widehat{A} x^{k}, \\
 \widehat{A} &:= \begin{bmatrix} x^1 & \dots & x^K \end{bmatrix} \begin{bmatrix} x^0 & \dots & x^{K-1} \end{bmatrix}^+. \label{eq:dmd}
\end{align}
\end{subequations}
Since the underlying model is a control system, DMD with control (DMDc)~\cite{morProBK16} (and known input operator $B$),
i.e. using $X^c := \begin{bmatrix} x^0 - B u^0 & \dots & x^K - B u^K \end{bmatrix}$ instead of $X$ for DMD,
is applicable, yet, due to the perturbed steady-state training of ROMs (see \cref{sec:workflow}) not beneficial.

\subsubsection{Reachability-Gramian-Based DMD-Galerkin}
DMD is rather a system identification than a model reduction method.
To fit into the projection setting, we utilize the DMD-Galerkin method~\cite{morAllK17},
which forms a Galerkin projection $U$ from the orthogonalized dominant eigenvectors (based on the associated eigenvalue magnitudes),
\begin{align*}
 \widehat{A} T &\stackrel{\evd}{=} T \Lambda, \\
 \to T \Lambda &\stackrel{\svd}{=} U D V^\T.
\end{align*}
Since only state-space trajectories are utilized, as for the POD, the DMD-Galerkin method approximates the state, not the output.
Curiously, we note, that this method uses discrete-time information to assemble projections for continuous-time systems.

\pagebreak

Practically, $U_*$ are computed as a subset of singular vectors for the largest magnitude singular values,
instead of orthogonalized eigenvectors.
In the structured setting at hand $\widehat{X}_* = \begin{bmatrix} x_*^0 & \dots & x_*^K \end{bmatrix}$,
with $X_*$ representing the pressure and mass-flux states $X_p$, $X_q$, the respective Galerkin projection $U_* = V_*$ is given by:
\begin{align*}
 \begin{bmatrix} x_*^1 & \dots & x_*^K \end{bmatrix} \begin{bmatrix} x_*^0 & \dots & x_*^{K-1} \end{bmatrix}^+ &\stackrel{\tsvd}{=} U_* D_* \widetilde{V}_*^\T.
\end{align*}

Effectively, the approximate Koopman operator $\widehat{A}$ is computed via an empirical reachability Gramian,
yet instead of the standard inner product, the DMD-``kernel''~\eqref{eq:dmd} is used.
Additionally, and in line with the original empirical Gramians~\cite{morLalMG99},
the utilized trajectories are centered, following~\cite{morHirHKetal20}.
This means theoretically, DMD-Galerkin is POD with a specific kernel, and practically,
that by computation via an empirical Gramian the systematic perturbation properties can also be exploited for DMD-Galerkin.

\vskip1em
\begin{figure}[h!]\centering
 \includegraphics[width=\textwidth]{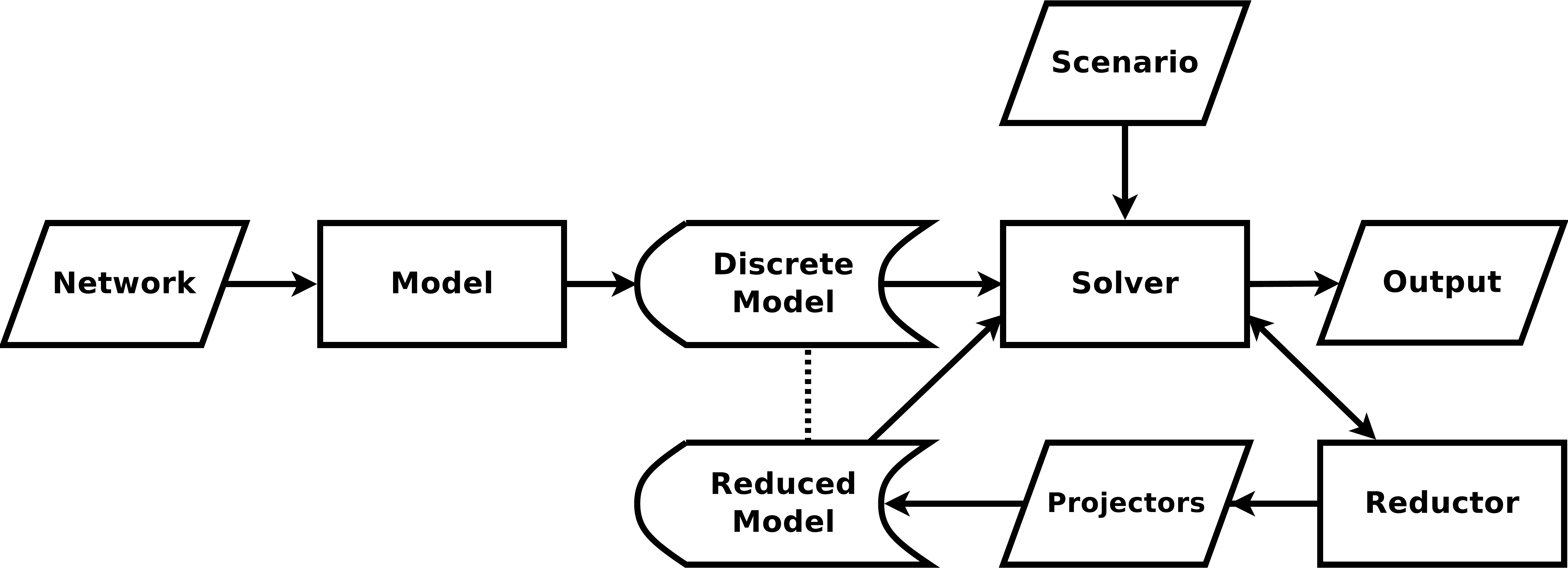}
 \caption{Internal data flow and process of \morgen{}.}\label{fig:morgen}
\end{figure}
\vskip-1em

\section{\morgen{}}\label{sec:morgen}
The \morgen{} (Model Order Reduction for Gas and Energy Networks) platform~(1.0) implements the mathematical methods presented above,
in MATLAB ($\geq$ 2020b) and compatible to Octave ($\geq$ 6.1).
Compared to, for example~\cite{HerDDetal09}, \morgen{} does not feature a graphical user interface or Simulink integration,
since it is designed for batch testing and multi-query use on (headless) workstations.

The source code is organized into five main components:
\begin{itemize}[leftmargin=5em]\itemsep.1ex

 \item[\texttt{networks}] \emph{(holds network and scenario data-sets)}

 \item[\texttt{models}] \emph{(discretizes networks and assembles input-output systems)}

 \item[\texttt{solvers}] \emph{(computes solution time series for discrete models and scenarios)}

 \item[\texttt{reductors}] \emph{(reduces state variables of discrete models)}

 \item[\texttt{tests}] \emph{(defines experiments for data-model-solver-reductor combinations)}.

\end{itemize}
These components are briefly described in the following.
For an illustration of the internal structure of \morgen{}, see \cref{fig:morgen}.
Further code that is used by the main function or multiple components is contained in an \texttt{utils} folder,
and the stand-alone network format converters\footnote{Converters for \textit{GasLib} \texttt{XML}, \textit{SciGRID\_gas} \texttt{CSV},
and \textit{MathEnergy} \texttt{JSON} are available.} are stored in a \texttt{tools} folder.

\subsection{Networks}
The \texttt{networks} directory stores the network \texttt{.net} files,
and a folder for each network with the same name as the associated network file's base name,
which hold the scenario definition files.
The \texttt{.net} file, in comma-separated value (CSV) format, defines the network topology and gas network components through an edge list.
Each row encodes one edge through the information: edge type (pipe, shortcut, compressor, valve), ``from''-node, ``to''-node, length, diameter, incline, roughness;
the latter three are only relevant for pipe edges. 

Note that boundary nodes have to be leaf nodes of the network graph in order to be identified by \morgen{} as such.
Furthermore, if a boundary node shall act as supply and demand,
it needs to be artificially split into two leaf nodes, as the edge connecting a supply node has to be directed from the leaf,
while the edge connecting a demand node has to be directed towards the leaf (\cref{sec:network}).

\subsubsection{Scenario}
A scenario is defined via a set of key-value pairs in an \texttt{.ini} file,
whereas sequence of pairs does not matter.
Each network has at least a generic \texttt{training.ini} scenario.
The following keys are mandatory for a scenario definition:
\begin{itemize}\itemsep.1ex
 \item[\texttt{T0}] \emph{Mean temperature}
 \item[\texttt{RS}] \emph{Mean specific gas constant}
 \item[\texttt{tH}] \emph{Time horizon}
 \item[\texttt{ut}] \emph{List of time instances}
 \item[\texttt{up}] \emph{List of supply node pressures  at time instances} \texttt{ut}
 \item[\texttt{uq}] \emph{List of demand node mass-fluxes  at time instances} \texttt{ut}.
\end{itemize}
Depending on the network composition, the following keys may need to be provided:
\begin{itemize}\itemsep.1ex
 \item[\texttt{cp}] \emph{List of compressor discharge pressures}
 \item[\texttt{vs}] \emph{List of valve settings}.
\end{itemize}
Note that amongst other configurations, the parameter ranges of temperature and specific gas constant for the parametric model reduction are set in the global \texttt{morgen.ini} file.

\subsection{Models}
A model encodes a spatial discretization of the simplified Euler equations~\eqref{eq:iosys} on a gas network topology
in a structure with the members: $A$, $B$, $C$, $E$, $F$, $f$ and the Jacobian $J$ (\cite{StoM18,AltZ19}),
together with the system dimensions in terms of number of pressure-at-nodes and mass-flux-on-edges states, and total number of boundary/port nodes.
It is ensured during the assembly of the model that the sparsity of the model components is preserved.

The model interface is given by the following signature:
\begin{center}
 \texttt{discrete = model(network,config);}
\end{center}

While the linear components $A$, $B$, $C$, and $F$ are provided as sparse matrices,
the parameter-dependent linear component $E$ is a closure\footnote{A closure is a pair of a function together with its scoped environment.} returning a sparse matrix,
and the nonlinear component $f$ and the (nonlinear) Jacobian $J$ are closures returning the application of a state (as well as steady-state, input, parameters, compressibility).

Two spatial discretizations are currently provided in \morgen{}:

\begin{itemize}[leftmargin=5em]\itemsep.1ex

 \item[\texttt{ode\_mid}] \emph{Midpoint ODE discretization} (\cref{sec:midpoint})

 \item[\texttt{ode\_end}] \emph{Endpoint ODE discretization} (\cref{sec:endpoint}).

\end{itemize}

Even though both provided models are ODEs,
DAE models can also be implemented, given a solver (and reductor) is available.

\subsection{Solvers}\label{sec:solvers}
The \morgen{} platform provides four solvers:
An adaptive step-size method, a fixed step-size explicit method and two fixed step-size implicit-explicit methods.

While explicit solvers only require vector field evaluations at the cost of smaller time steps,
implicit solvers have to solve a root-finding problem in each time step for a nonlinear model.
An IMEX with singly DIRK (SDIRK) solver turns out to be the most efficient for this class of models,
simplifying the nonlinear problem to a linear problem solvable by a single matrix decomposition per trajectory.
Due to the non-diagonal mass matrix,
even in the case of a sufficiently stable and accurate explicit method, at least one linear problem per trajectory would have to be solved.

The solver interface is given by the following signature:
\begin{center}
 \texttt{solution = solver(discrete,scenario,config);}
\end{center}
with the return value \texttt{solution}, being a structure,
and the arguments \texttt{discrete} (model), \texttt{scenario}, and \texttt{config}(uration).

All provided fixed step-size solvers cache matrix decompositions.
The initial steady-state is also cached, as is the QR decomposition used to compute it.

Even though the overall model~\eqref{eq:iosys} has a two dimensional structure and the reductors exploit this structure,
the simulations itself can be performed on a lumped model (we omit parametrization here for ease of notation):
\begin{align}\label{eq:lumped}
\begin{split}
 E\dot{x}(t) &= Ax(t) + Bu(t) + f(x(t),u(t)), \\
        y(t) &= Cx(t).
\end{split}
\end{align}

\subsubsection{Second-Order Adaptive Implicit Solver: \texttt{generic}}\label{sec:adapt}
For validation purposes, the adaptive step-size solver for stiff systems \texttt{ode23s}\footnote{\url{https://mathworks.com/help/matlab/ref/ode23s.html} (accessed: 2020-11-18)}
included in MATLAB (and Octave)~\cite[Sec.~2.3]{ShaR97},
based on a modified second order Rosenbrock formula is used and encapsulated as a generic solver.
Due to preferential performance demonstrated in~\cite{morGruHR16}, \texttt{ode23s} is preferred over alternatives such as \texttt{ode15s} (or \texttt{ode45}).

\subsubsection{Fourth-Order ``Classic'' Runge-Kutta Solver: \texttt{rk4}}\label{sec:rk4}
Since in~\cite{Osi84,Osi87}, the fourth-order explicit Runge-Kutta (RK) method~\cite{Kut01} is employed, \morgen{} provides it, too.
This method is strong stability preserving~\cite{GotST01}, however it is not SSP-optimal and it works only for small time-steps.

\subsubsection{First-Order Implicit-Explicit Solver: \texttt{imex1}}\label{sec:imex1}
The lumped gas network model~\eqref{eq:lumped} can be split into a linear and a nonlinear part,
of which the linear part is numerically stiff and hence should be solved with an implicit solver,
while an explicit solver is preferred for the nonlinear part as to avoid solving a root-finding (optimization) problem in each time step.

An IMEX method allows this separate treatment of operators and is thus suitable for this hyperbolic and nonlinear system.
Combining the first-order explicit Euler's method with the first order implicit Euler's method yields the first order IMEX method:
\begin{align*}
                    E h^{-1}(x_{k+1} - x_k) &= (1 - \gamma) A x_k + \gamma A x_{k+1} + B u_k + f(x_k,u_k) \\
 \Rightarrow E x_{k+1} - \gamma h A x_{k+1} &= E x_k + (1 - \gamma) h A x_k + h B u_k + h f(x_k,u_k) \\
                        \Rightarrow x_{k+1} &= x_k  + (E - \gamma h A)^{-1} h \big(A x_k + B u_k + f(x_k,u_k)\big),
\end{align*}
with the associated Butcher tableaus \cref{tab:imex1}.
Even though this IMEX method is not a Runge-Kutta method~\cite{AscRS97},
it was successfully applied to gas network models in~\cite{morGruJ15} with a relaxation parameter set to $\gamma = 1$.

\begin{table}[h]\centering
 \begin{tabular}{r|c}
  \multicolumn{2}{c}{Explicit:} \\[1ex]
  0 & 0 \\ \hline
    & 1
 \end{tabular}~~~~~~~~~~~~~
 \begin{tabular}{r|c}
  \multicolumn{2}{c}{Implicit:} \\[1ex]
  1 & 1 \\ \hline
    & 1
 \end{tabular}~~~~~~~
 \caption{Butcher tableaus for the 1st-order IMEX method (\cref{sec:imex1}).}
 \label{tab:imex1}
\end{table}

\subsubsection{Second-Order Implicit-Explicit Runge-Kutta Solver: \texttt{imex2}}\label{sec:imex2}
A second-order (two-stage) IMEX Runge-Kutta solver is provided, based on the combination of a second-order explicit SSP Runge-Kutta method~\cite{GotST01},
and a second-order DIRK method.
Following~\cite{ParG05}, such an IMEX-SSP2(2,2,2) method with relaxation $\gamma$ is given by:
\begin{align*}
 z_1 &= (E - h \gamma \lambda A)^{-1} \phantom{(} E x_k, \\
 z_2 &= (E - h \gamma \lambda A)^{-1} (E x_k + h B u_k  + h f(x_k,u_k) + h \gamma (1 - 2 \lambda) A z_1), \\
 x_{k+1} &= x_k + E^{-1} \frac{h}{2} (B u_k + f(x_k,u_k) + \gamma A z_1 + B u_{k+1} + f(z_1,u_{k+1}) + \gamma A z_2).
\end{align*}
The explicit component of this IMEX method is SSP-optimal~\cite{GotST01},
while depending on the choice for the free parameter $\lambda$,
different properties of the implicit component can be achieved (see \cref{tab:lambda}).
Practically, we found $\lambda = {\textstyle \frac{1}{2}}$, making the implicit part SDIRK and stiffly accurate, to work best.
Additionally, we would like to highlight passive Runge-Kutta methods~\cite{Och01},
specifically PDIRK (passive DIRK), as implicit IMEX component,
which have various desirable stability and conservation properties~\cite{FraO01}.

\begin{table}[H]\centering
 \rowcolors{1}{white}{lightgray}
 \begin{tabular}{lcc}
  Parameter & Property & Source \\ \hline
  $\lambda = 0.24$ & ``Efficient'' & \cite{KupHHetal12} \\
  $\lambda \geq \frac{1}{4}$ & A-stable & \cite{IzzJ17} \\
  $\lambda = \frac{2 - \sqrt{2}}{2} = 1 - \frac{1}{\sqrt{2}}$ & L-Stable & \cite{ParG05} \\
  $\lambda = \frac{1}{2}$ & Stiffly accurate & \morgen, cf.~\cite[Sec.~5.1]{IzzJ17} \\ 
  $\lambda = \frac{3 + \sqrt{3}}{6} = \frac{1}{2} + \frac{1}{\sqrt{12}}$ & Passive & \cite{Och01}, cf.~\cite[Sec.~3.2.2]{Ban15}
 \end{tabular}
 \caption{Parameter choices for the implicit 2nd-order IMEX-RK component.}
 \label{tab:lambda}
\end{table}

The associated Butcher tableaus are given in \cref{tab:imex2}.

In our experiments, the first-order IMEX method \cref{sec:imex1} allowed larger time-steps and exhibited less numerical oscillations or artifacts
compared to the second-order IMEX-RK methods.
The generic (adaptive) method \cref{sec:adapt} also solves sufficiently accurate, but takes longer to compute.
Thus, by default, we recommend the first-order IMEX integrator for gas network simulations.

\begin{table}[H]\centering
 \begin{tabular}{c|cc}
  \multicolumn{3}{c}{Explicit:} \\[1ex]
  $0$ & $0$ & $0$ \\
  $1$ & $1$ & $0$ \\ \hline
    & $\frac{1}{2}$ & $\frac{1}{2}$
 \end{tabular}~~~~~~
 \begin{tabular}{c|cc}
  \multicolumn{3}{c}{Implicit:} \\[1ex]
  $\lambda$ & $\lambda$ & $0$ \\
  $1-\lambda$ & $1-2\lambda$ & $\lambda$ \\ \hline
    & $\frac{1}{2}$ & $\frac{1}{2}$
 \end{tabular}
 \caption{Butcher tableaus for the 2nd-order IMEX-RK method (\cref{sec:imex2}).}
 \label{tab:imex2}
\end{table}

\subsection{Reductors}
The reductor module provides methods that compute structured (struct.) projectors for a given discretization.
These projectors can be stored on disk for reuse.
Currently, the reductors, described in \cref{sec:methods} are included:
\begin{itemize}[leftmargin=4em]\itemsep.1ex

 \item[\texttt{pod}] \emph{Struct. proper orthogonal decomposition}

 \item[\texttt{eds}] \emph{Struct. empirical dominant subspaces}

 \item[\texttt{bpod}] \emph{Struct. balanced proper orthogonal decomposition}

 \item[\texttt{ebt}] \emph{Struct. empirical balanced truncation}

 \item[\texttt{gopod}] \emph{Struct. goal-oriented proper orthogonal decomposition}

 \item[\texttt{ebg}] \emph{Struct. empirical balanced gains}

 \item[\texttt{dmd}] \emph{Struct. dynamic mode decomposition Galerkin.}

\end{itemize}
Each reductor variant has a suffix characterizing the employed empirical Gramians:
\texttt{\_r} for reachability, \texttt{\_ro} for reachability and observability,
\texttt{\_wx} for cross, and \texttt{\_wz} for non-symmetric cross Gramian.
For each reductor utilizing observability information, a linear variant using the dual system is available,
and signified with the additional suffix \texttt{\_l}.
Originally, all methods were also tested in an unstructured variant, but showed insufficient accuracy.

The reductors have the interface:
\begin{center}
 \texttt{[proj,name] = reductor(solver,discrete,scenario,config);}
\end{center}
returning a (cell) array of \texttt{proj}ectors with maximum configured column rank, as well as the reductor's full name.
These projectors specific to model and solver defining a ROM are then stored in a \texttt{.rom} file.

\pagebreak

\subsubsection{Empirical Gramian Framework}
The compute back-end for the model reduction methods is \texttt{emgr} -- empirical Gramian framework~\cite{morHim18b};
currently in version~5.9~\cite{morHim21a}.
This (open-source) \linebreak Octave and MATLAB toolbox computes the empirical Gramians,
which are essential to construct the reduced order models via the Gramian-based model reduction methods from \cref{sec:methods},
including the structured DMD-Galerkin method.

\subsection{Tests}
A test is a script defining an experiment by specifying network, scenario, model, solver and reductors.
The \texttt{tests} component is a collection of test scripts probing primarily model reduction for various networks.
A typical test contains two calls to the main \texttt{morgen} function.
The first call computes the reduced order model (offline phase):
\begin{center}
 \texttt{morgen(network,training\_scenario,model,solver,reductors);}
\end{center}
which computes a ROM from a short, generic, steady-state \texttt{training\_scenario}.
The projectors defining the reduced order models are then stored in \texttt{rom\_files}.
The second call tests the reduced order model(s) on a longer \texttt{test\_scenario} (online phase):
\begin{center}
 \texttt{morgen(network,test\_scenario,model,solver,rom\_files);}
\end{center}
Generally, a model reduction method can also be tested in a single call,
disregarding that scenario's boundary value time series,
yet for productive use, a reduced order model is constructed once (first call),
and then employed for many different scenarios (second call).

Included in \morgen{} are two types of tests: First, tests prefixed with ``\texttt{sim$\_$}'' only simulate the test scenarios,
second, tests prefixed with ``\texttt{mor$\_$}'' compute the reduced order models using training scenarios,
and benchmark these ROMs on the test scenarios.

\section{Numerical Experiments}\label{sec:numex}
In the following, we present three sets of numerical experiments,
with the purpose of demonstrating the reducibility of gas network models via the data-driven, parametric, system-theoretic model order reduction algorithms from \cref{sec:methods},
and illustrating the capabilities of the \morgen{} framework summarized in \cref{sec:morgen}.
The first set uses a pipeline ``network'', which is interesting in the context of model reduction,
while the second set tests an academic toy network as a sanity check and a simple functionality test.
Lastly, a realistic gas network topology is evaluated.

We note that various further networks are included for testing in \morgen{}; among others:
the \textsc{Canvey-Leeds} network~\cite{Guy67,Kiu94},
the Belgium transport network~\cite{DeWS00,morMohHHetal04}
and a part of the \textsc{Fermaca} network~\cite{RodSd18}.
A synthetic pipeline model and associated simulation results were provided by the \textsc{PSI~Software~AG}
for validation of \morgen{} against the commercial \texttt{PSIganesi}\footnote{\url{https://www.psigasandpipelines.com}} solver.

\pagebreak

\subsection{Workflow}\label{sec:workflow}
For each of the numerical tests, the same workflow is employed,
which is composed of a training phase (offline phase), in that the ROMs are computed,
using a generic test scenario, with a (virtual) time horizon of $1$h,
and boundary value input functions typical for system identification,
i.e. Dirac impulse, step signal, random-binary signal or Gauss noise~\cite[Ch.~16]{Nel01}.
In the test phase (online phase), the ROMs are tested on scenarios with a (virtual) time horizon of $24$h \cite{CheFBetal15,MakVZetal19},
(starting at 6am~\cite{EhrS05}).
In addition to shorter offline phases,
this difference in training and test time horizons emphasizes generality of the ROMs.

To verify models and solvers~\cite{OsiC10} this offline/online procedure is performed for all combinations of:
\begin{itemize}[itemsep = -.5ex]

 \item Models: \texttt{ode\_mid}, \texttt{ode\_end};

 \item Solvers: \texttt{imex1}, \texttt{imex2};

 \item Reductors: \texttt{pod\_r}, \,\,\, \texttt{eds\_ro}, \texttt{eds\_wx}, \texttt{eds\_wz}, \\
       \phantom{Reductors:} \texttt{bpod\_ro}, \texttt{ebt\_ro}, \texttt{ebt\_wx}, \texttt{ebt\_wz}, \\
       \phantom{Reductors:} \texttt{gopod\_r}, \texttt{ebg\_ro}, \texttt{ebg\_wx}, \texttt{ebg\_wz}, \\
       \phantom{Reductors:} \texttt{dmd\_r};
\end{itemize}
whereas the port-Hamiltonian \texttt{ode\_end} model is tested with the nonlinear as well as the linear reductor variant (\texttt{\_l} suffix) if available,
while the \texttt{ode\_mid} model is only tested with the nonlinear reductor variant.

The models are specialized by the \textit{Schifrinson} friction, and the \textit{AGA88} compressibility factor formula.
We excluded the \texttt{generic} and \texttt{rk4} solvers in this comparison as they are too slow or too fragile, respectively.
Yet, the test scenario visualizations in \cref{fig:numex1:scen} and \cref{fig:numex2:scen}
are computed by the \texttt{generic} solver.

For the parametric model reduction,
the temperature range for training and testing is set to $[0^\circ,15^\circ]$C,
while the specific gas constant range is chosen as $[500,600]\frac{\text{J}}{\text{kg}\,\text{K}}$.
During training, samples from the parameter space are drawn from a sparse grid,
whereas for the tests, parameters are drawn from a uniform random distribution.
For either test and training, five parameters are sampled.
The input perturbations for the steady-state training scenario are selected to be a step function,
which heuristically works well for hyperbolic systems~\cite{morGruHS19}.

The reduced order models are compared via the approximate, discrete, parametric, $(L_2 \otimes L_2)$-norm of the output error~\cite{morHim21}:
\begin{align*}
 \| y - \tilde{y} \|_{L_2 \otimes L_2} \approx \sqrt{\sum_{\theta_h \in \Theta_h} \Delta t \big\|\!\operatorname{vec}\big(y_h(\theta_h) - \tilde{y}_h(\theta_h)\big)\big\|_2^2},
\end{align*} 
for a finite sample $\Theta_h$ of the parameter space $\Theta$ and discrete output samples $y_h(\theta_h)$, $\tilde{y}_h(\theta_h)$.
This energy norm is chosen, since all methods are at least related to an energy-based method.
However, \morgen{} can also provide the errors in the \emph{approximate} parametric $(L_k \otimes L_\ell)$ parameter-space-state-space norms
for $k \in \{1,2,\infty\}$, $\ell \in \{0,1,2,\infty\}$, cf.~\cite{morGruHKetal13}.
Note, that due to nonlinearity of the considered models, and the averaging nature of the norm, a monotonic error decay cannot be expected.
To enhance comparability of the results, also the reducibility measure \textsc{MORscore}~\cite{morHim21} for each experiment is computed.
The \textsc{MORscore} for a certain method and model is essentially the area above the method's error graph in the relative error plot such as \cref{fig:numex1:mid1}.

Lastly, we note that the following numerical experiments are conducted using a computer with an
AMD \textsc{Ryzen} 4500U @ 2.3Ghz hexa-core processor and 16GiB memory
running MATLAB~2021a on \textsc{Ubuntu}~20.04 \textsc{Linux}.

\begin{figure}[H]\centering

 \includegraphics[width=\textwidth]{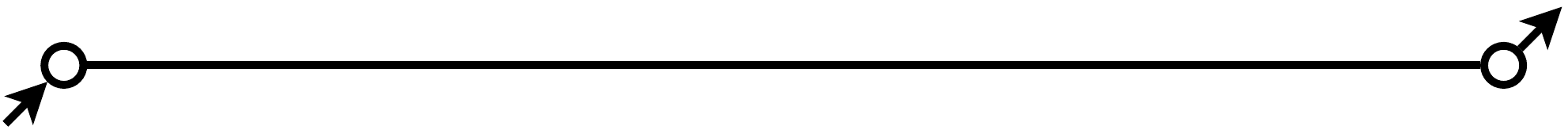}

 \caption{Pipeline topology.}
 \label{fig:pipeline:topo}
\end{figure}

\subsection{Yamal-Europe Pipeline}\label{sec:numex1}
First, a pipeline is tested,
which is an interesting test case, since the trivial topology (\cref{fig:pipeline:topo}) comprises little redundancy,
hence pipelines are a useful benchmark for model reduction methods.

The Yamal-Europe pipeline connects gas fields in Russia with western Europe\footnote{See also \url{https://en.wikipedia.org/wiki/Yamal-Europe\_pipeline}.}.
A section of this pipeline was also benchmarked in~\cite{Cha09,OsiC10,BroGH11,BenGHetal18},
from which the technical properties and test scenario are taken.
The considered pipeline section is $363$km long, has a diameter of $1.422$m, no (reported) inclination, and a pipe roughness of $0.01$mm.
A steady-state, used as initial state, is set by a supply pressure of $84$bar and demand mass-flux of $46.3\frac{\text{kg}}{\text{s}}$.

The semi-discrete nonlinear state-space system has two inputs and outputs as well as $908$ states;
and a time step width of $20$s is used.
The employed test scenario is taken from~\cite{Cha09}, compressed to $24$h, and shown in \cref{fig:numex1:scen},
the associated model reduction errors are given in \cref{fig:numex1:mid1}, \cref{fig:numex1:end1}, \cref{fig:numex1:mid2}, and \cref{fig:numex1:end2},
for up to reduced order $150$, while the resulting \textsc{MORscore}s are listed in \cref{tab:numex1:morscore}.

Generally, the choice of solver is more relevant than the choice of model:
while the \textsc{MORscore}s for different models but same solver are similar,
for the same model but different solver, they are significantly dissimilar.
Also, the tested balancing (Petrov-Galerkin) methods perform worse than the Galerkin methods.

For both models, and the first-order IMEX solver, the structured empirical dominant subspaces methods perform best,
followed (closely) by the DMD-Galerkin and (goal-oriented) POD method;
then, among the balancing methods, the balanced POD and cross-Gramian-based variants.
The most overall accurate method is the cross-Gramian-based dominant subspaces method.

For both models, in combination with the second-order IMEX-RK solver,
the structured POD and goal-oriented methods lead, followed by the DMD-Galerkin reductor.
For both solvers, the endpoint model performs better than the midpoint model.
In case of the port-Hamiltonian endpoint model,
the linear Galerkin reductors are about as accurate as the nonlinear Galerkin reductors.

We note that the cross-Gramian-based dominant subspaces methods produce the lowest errors,
and since for the linear reductors used in combination with the endpoint model,
the dominant subspaces methods are as efficient as the purely reachability-based DMD-Galerkin,
and (goal-oriented) POD methods.

Interestingly, while the second-order IMEX-RK solver is better suited for simulations of the full order model,
in terms of data-driven model reduction and/or reduced order model simulation it is significantly worse than the first-order IMEX solver.
This is also demonstrated in the subsequent experiments.

\pagebreak 

\begin{figure}[H]

\begin{subfigure}{0.49\textwidth}
\includegraphics[width=\textwidth]{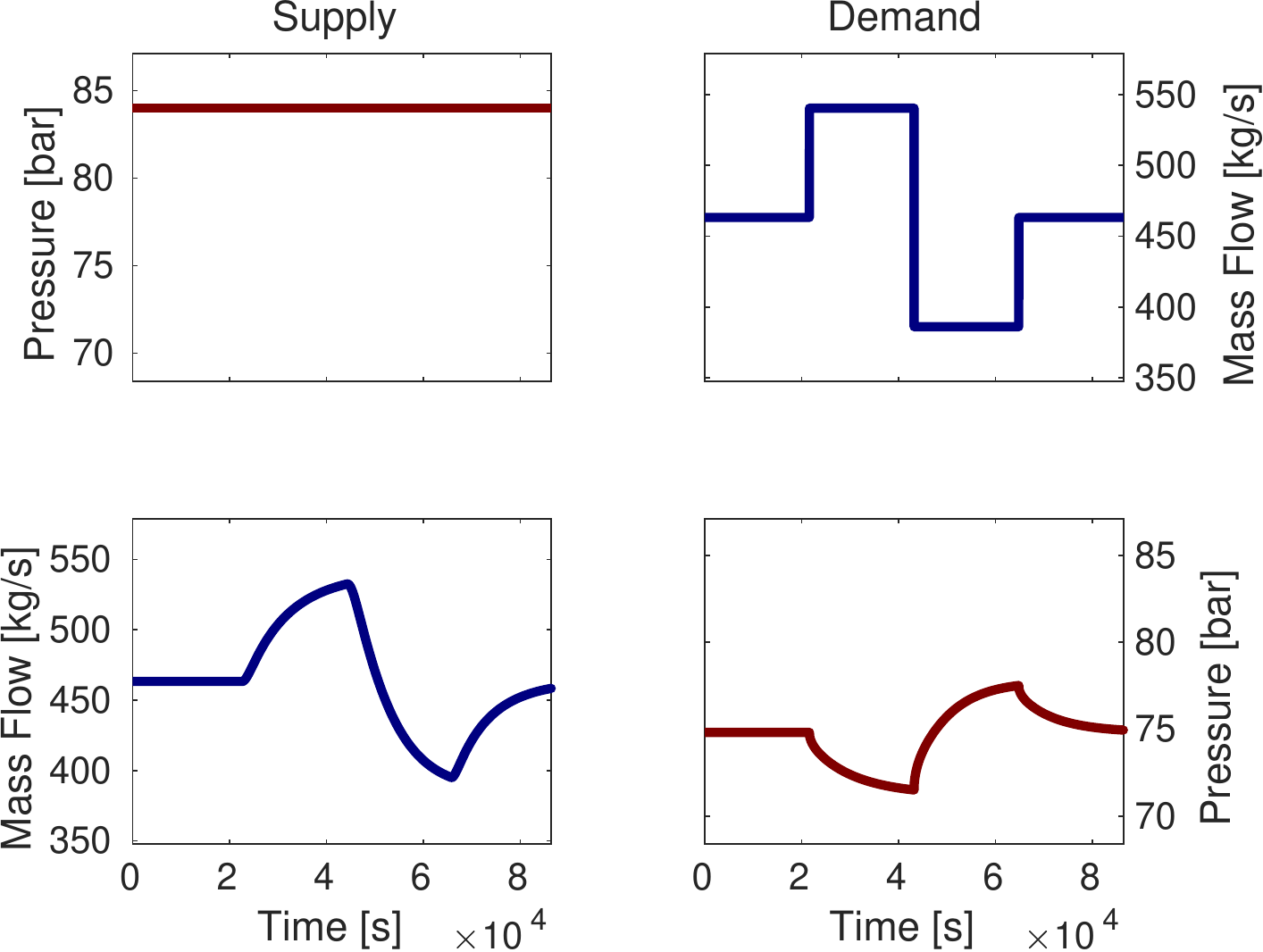}
\caption{Boundary values and quantities of \linebreak interest plots for the test scenario.}
\label{fig:numex1:scen}
\end{subfigure}
~~
\begin{subfigure}{0.49\textwidth}

\vspace{2em}

\includegraphics[width=\textwidth]{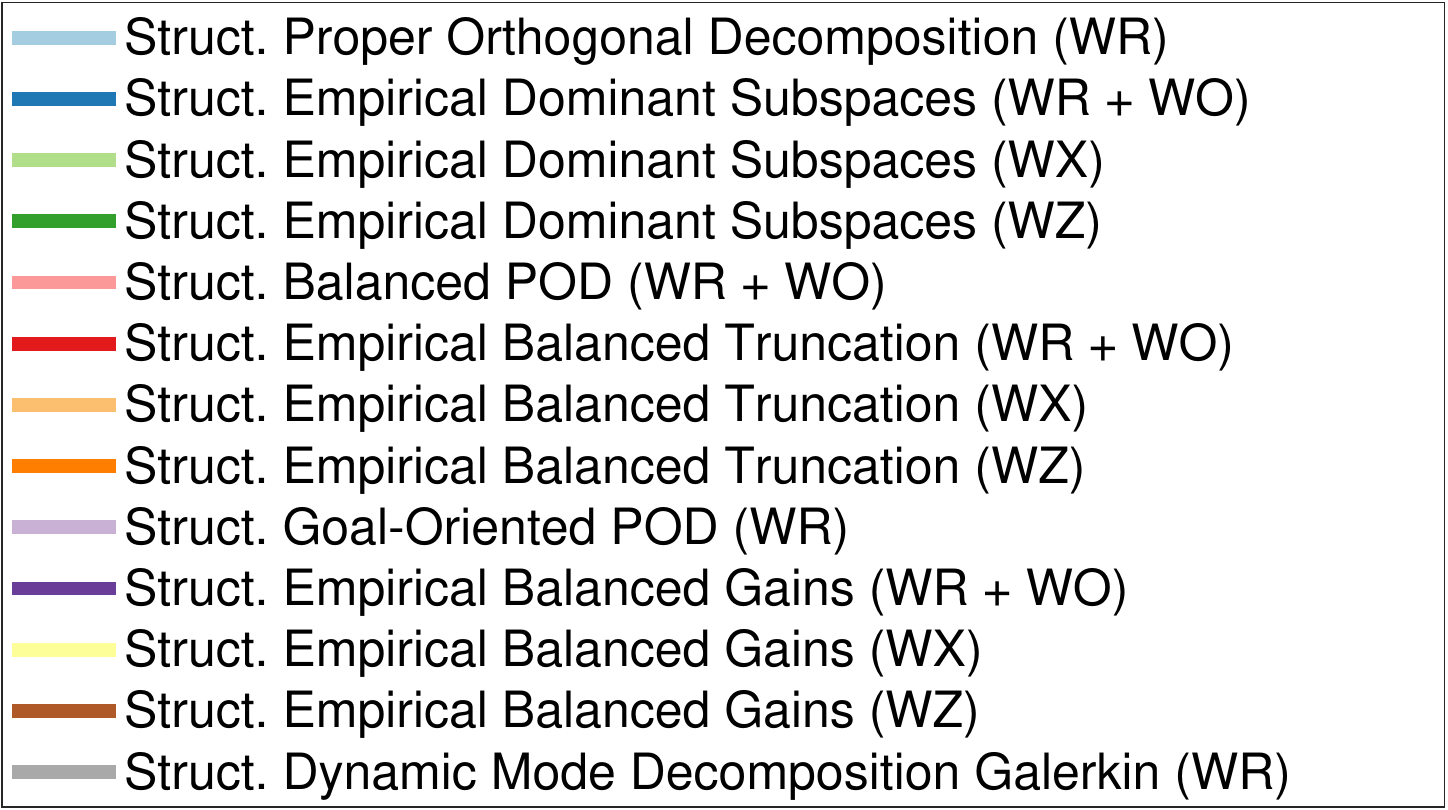}

\vspace{2em}

\caption{Common legend for the error plots \cref{fig:numex1:mid1}, \cref{fig:numex1:end1}, \cref{fig:numex1:mid2}, \cref{fig:numex1:end2}.}
\label{fig:numex1:legend}
\end{subfigure}

\begin{subfigure}{0.49\textwidth}
\includegraphics[width=\textwidth]{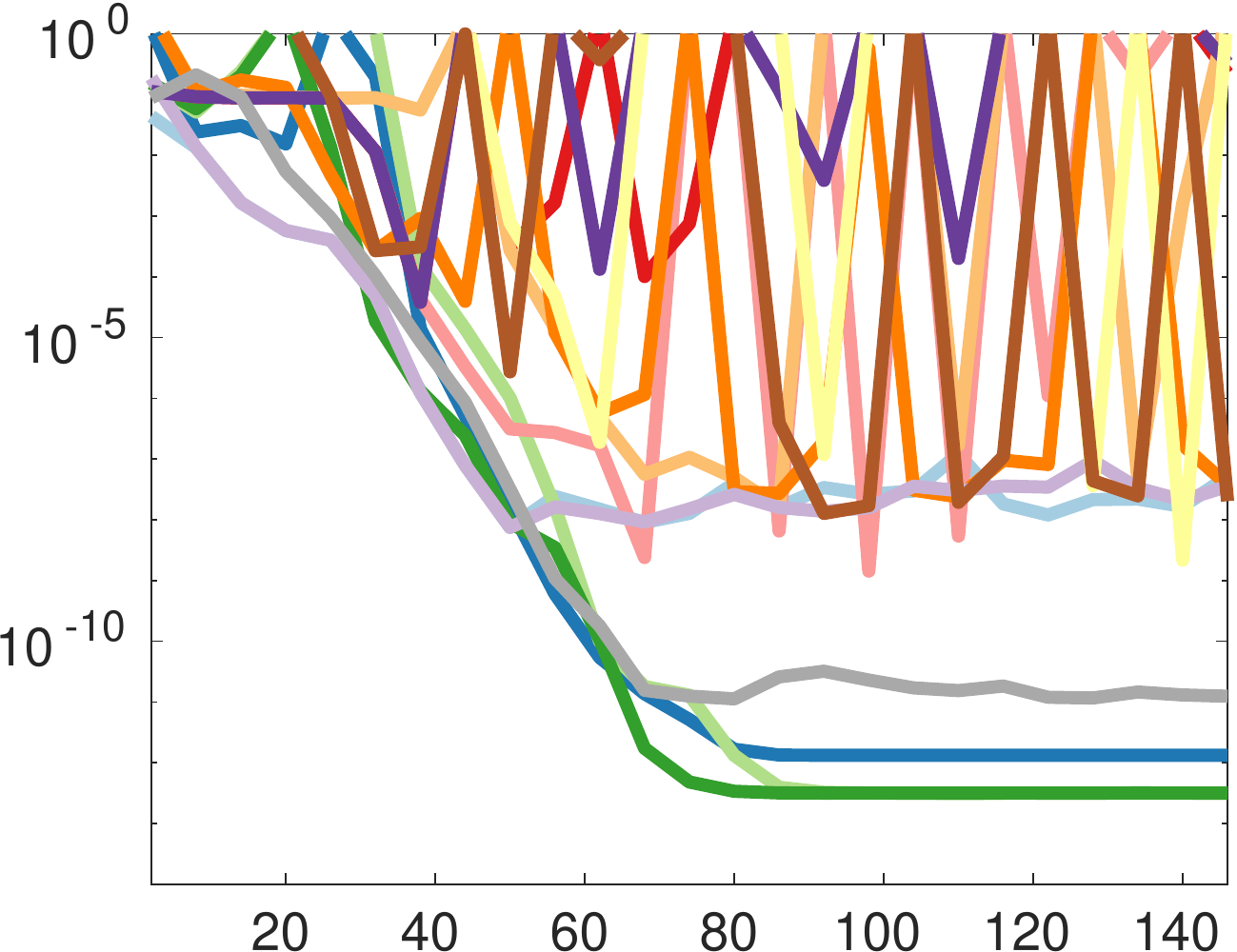}
\caption{$L_2 \otimes L_2$ error between ROM and FOM for the \texttt{ode\_mid} model, \texttt{imex1} solver, and \emph{nonlinear} reductors versus reduced order.}
\label{fig:numex1:mid1}
\end{subfigure}
~~
\begin{subfigure}{0.49\textwidth}
\includegraphics[width=\textwidth]{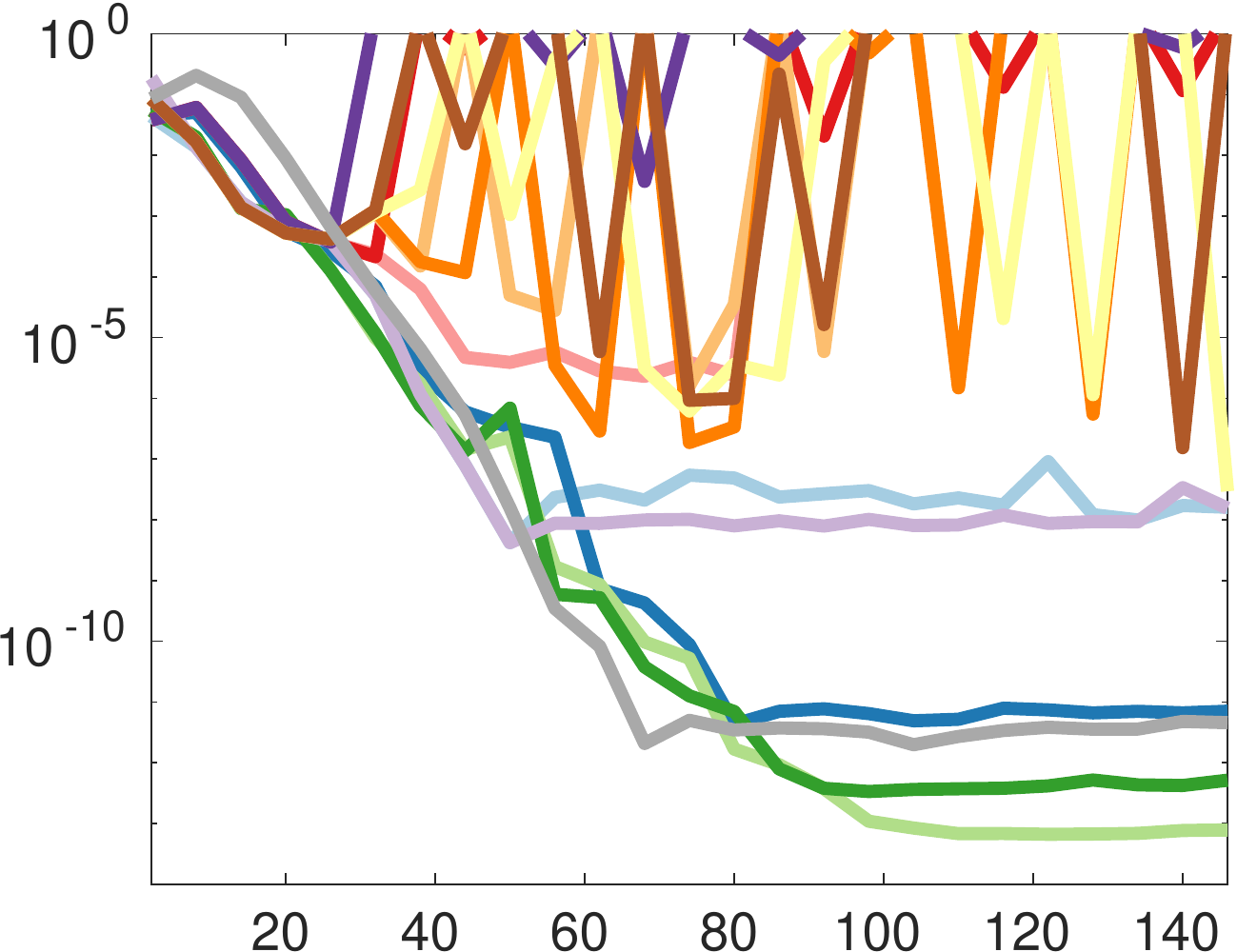}
\caption{$L_2\,\otimes\,L_2$ error between ROM and FOM for the \texttt{ode\_end} model, \texttt{imex1} solver, and \emph{linear} reductors versus reduced order.}
\label{fig:numex1:end1}
\end{subfigure}

~\\

\begin{subfigure}{0.49\textwidth}
\includegraphics[width=\textwidth]{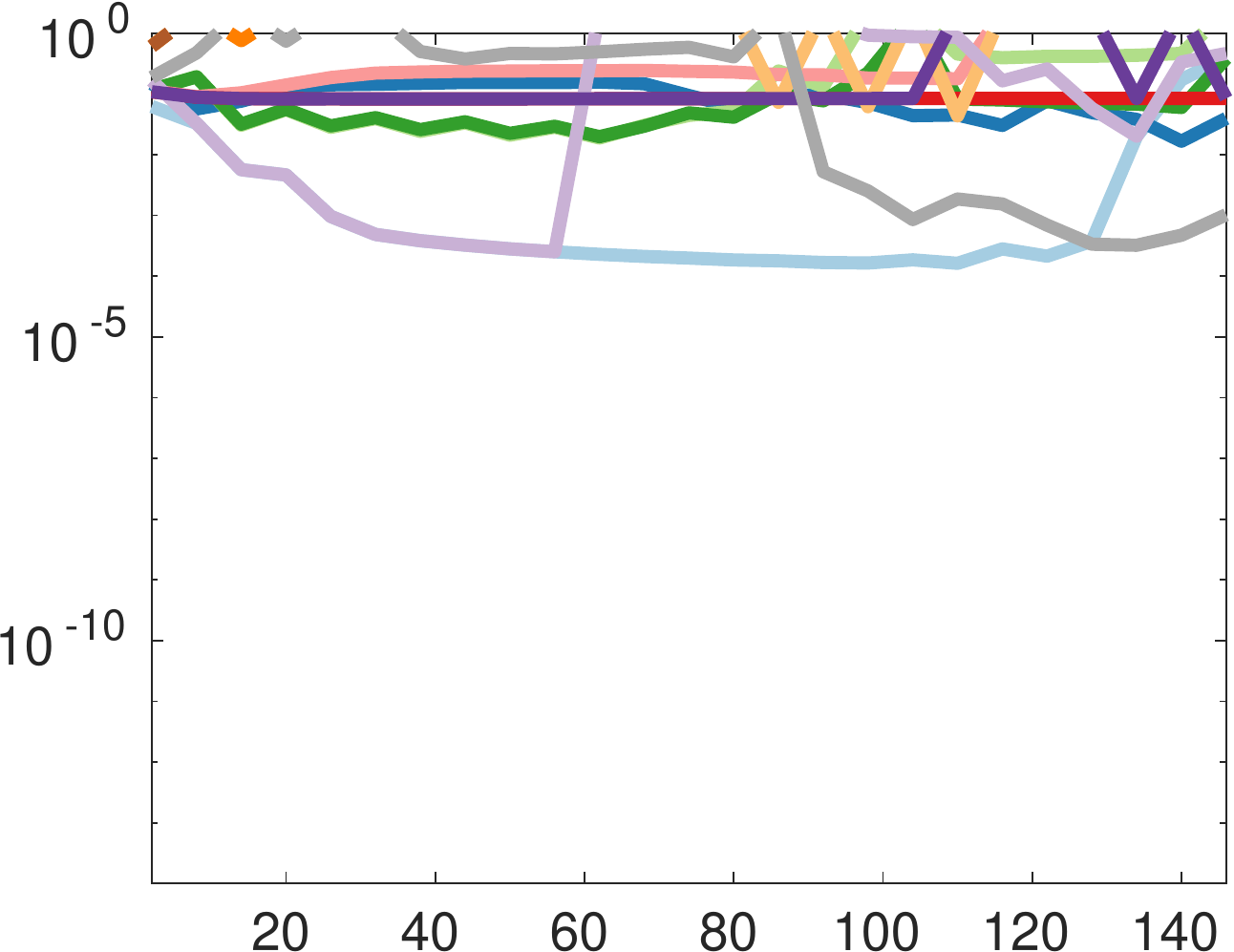}
\caption{$L_2\,\otimes\,L_2$ error between ROM and FOM for the \texttt{ode\_mid} model, \texttt{imex2} solver, and \emph{nonlinear} reductors versus reduced order.}
\label{fig:numex1:mid2}
\end{subfigure}
~~
\begin{subfigure}{0.49\textwidth}
\includegraphics[width=\textwidth]{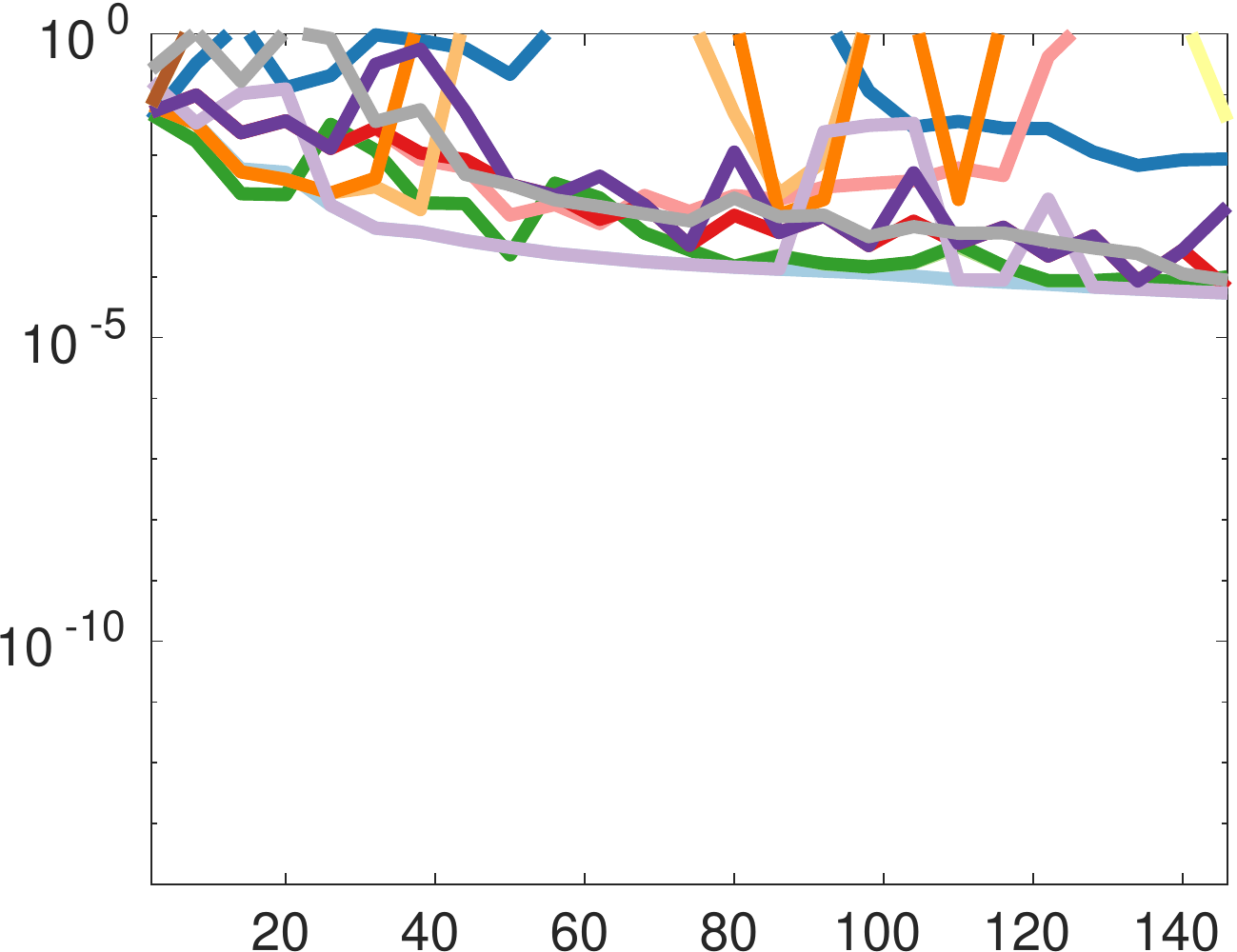}
\caption{$L_2\,\otimes\,L_2$ error between ROM and FOM for the \texttt{ode\_end} model, \texttt{imex2} solver, and \emph{linear} reductors versus reduced order.}
\label{fig:numex1:end2}
\end{subfigure}

\caption{Visualization of the test scenario, and model reduction errors of the tested ROMs for the Yamal-Europe pipeline from \cref{sec:numex1}.}
\label{fig:numex1}
\end{figure}

\newpage 

\begin{figure}[H]\centering

 \vspace{2em}

 \includegraphics[width=\textwidth]{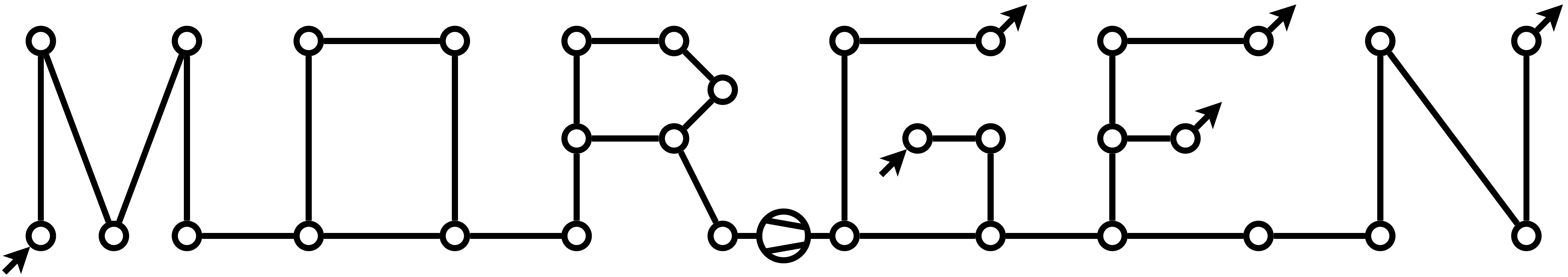}

 \vspace{2em}

 \caption{MORGEN network topology.}
 \label{fig:MORGEN:topo}
\end{figure}

\subsection{MORGEN Network}\label{sec:numex2}
The second set of tests encompasses a synthetic network for testing \morgen{}'s capabilities.
This ``MORGEN'' network, with topology as in \cref{fig:MORGEN:topo},
tests the interaction of simplified compressors from \cref{sec:comp} for various network features,
such as cycles, multiple supply and demand nodes, and is in the spirit of a test network from~\cite{morEggKLetal18}.

Specifically, six sub-networks (in the shape of letters) are connected,
the second and third sub-network contain a cycle,
a compressor connects the third and fourth sub-network,
and the fourth and fifth sub-network contain additional supply and demand nodes.
The edges vary in length between $20$km and $60$km,
while the diameter and roughness are consistently $1$m and $0.01$mm, respectively.
A steady-state, used as initial state, is set by supply (and discharge) pressures of $50$bar at both supply nodes and the compressor,
and demand mass-fluxes of $30\frac{\text{kg}}{\text{s}}$ at all demand nodes.

In semi-discrete form, the nonlinear state-space system features six inputs and outputs as well as $901$ states;
and a time discretization with $60$s time steps.
The employed $24$h test scenario is made from hourly standard load profiles~\cite{Hel03} and shown in \cref{fig:numex2:scen},
the associated model reduction errors are given in \cref{fig:numex2:mid1}, \cref{fig:numex2:end1}, \cref{fig:numex2:mid2}, and \cref{fig:numex2:end2},
for up to reduced order $200$, while the resulting \textsc{MORscore}s are listed in \cref{tab:numex2:morscore}.

Again for this comparison, the choice of solver is more relevant than the choice of model,
yet the \textsc{MORscore}s are much lower, due to the complexities (cycles, compressor, multiple demands) of the network.

Both models in conjunction with the first-order IMEX solver only produce workable results with Galerkin methods.
The endpoint ROMs are again more accurate than the midpoint ROMs.
Notably, the linear reachability-and-observability dominant subspaces method for the endpoint model, is leading the \textsc{MORscore}s.

The ROMs for both models with the second-order IMEX-RK solver perform worse,
with the exception of the balanced truncation \texttt{ebt\_ro} and balanced gains \texttt{ebg\_ro} variants.
However, the second-order IMEX-RK solver related ROMs are of no practical use due to the high(er) error.

Overall, the (linear) \texttt{eds\_ro} dominant subspaces reductor produces the lowest error,
followed by the DMD-Galerkin, (goal-oriented) POD, and cross-Gramian-based dominant subspaces methods.
As for the pipeline, the endpoint model is better suited for model reduction,
while the first-order IMEX solver results in significantly more accurate ROMs than the second-order IMEX-RK solver.

\pagebreak 

\begin{figure}[H]
\begin{subfigure}{0.49\textwidth}
\includegraphics[width=\textwidth]{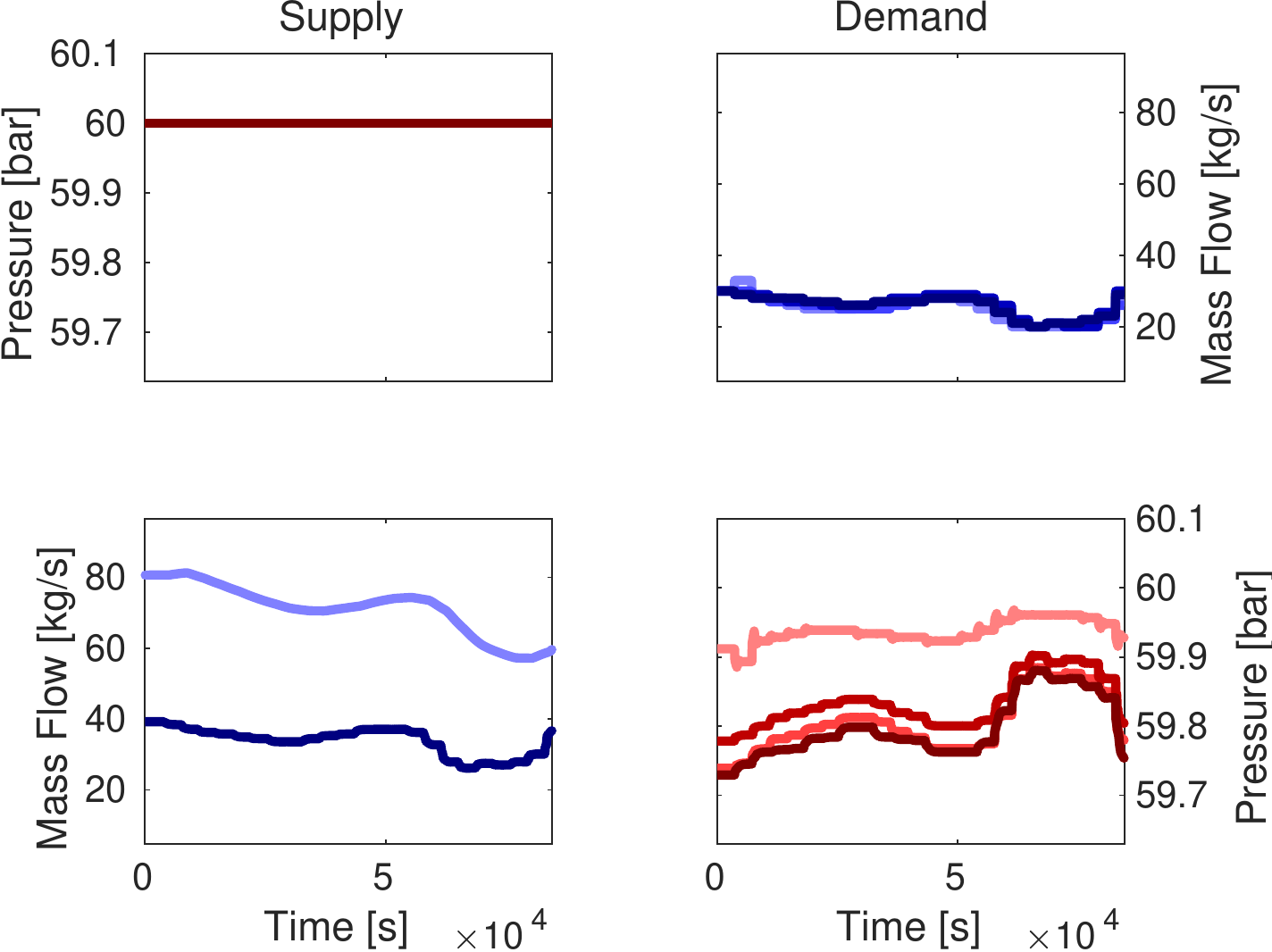}
\caption{Boundary values and quantities of \linebreak interest plots for the test scenario.}
\label{fig:numex2:scen}
\end{subfigure}
~~
\begin{subfigure}{0.49\textwidth}

\vspace{2em}

\includegraphics[width=\textwidth]{plots/legend.pdf}

\vspace{2em}

\caption{Common legend for the error plots \cref{fig:numex2:mid1}, \cref{fig:numex2:end1}, \cref{fig:numex2:mid2}, \cref{fig:numex2:end2}.}
\label{fig:numex2:legend}
\end{subfigure}

~\\

\begin{subfigure}{0.49\textwidth}
\includegraphics[width=\textwidth]{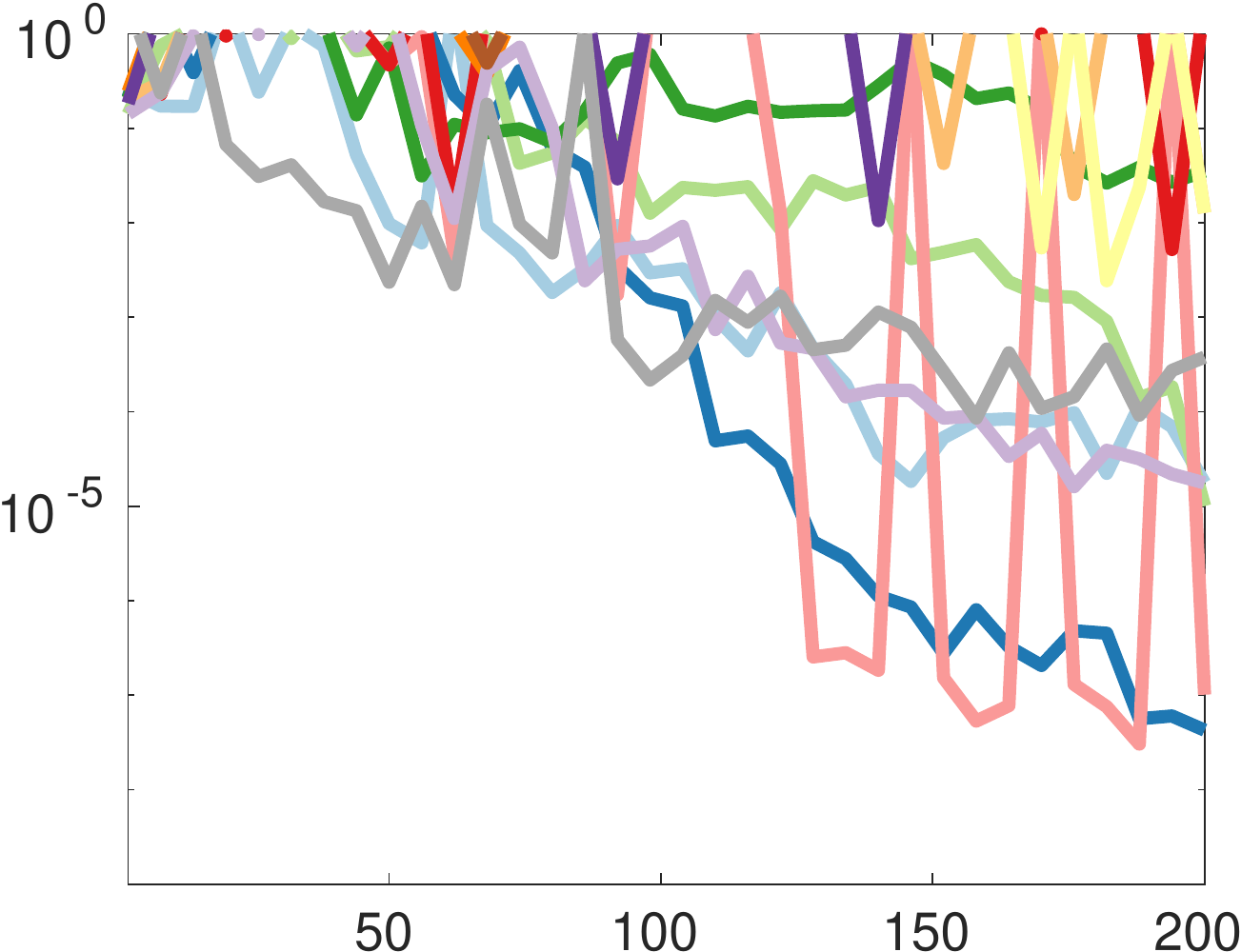}
\caption{$L_2\,\otimes\,L_2$ error between ROM and FOM for the \texttt{ode\_mid} model, \texttt{imex1} solver, and \emph{nonlinear} reductors versus reduced order.}
\label{fig:numex2:mid1}
\end{subfigure}
~~
\begin{subfigure}{0.49\textwidth}
\includegraphics[width=\textwidth]{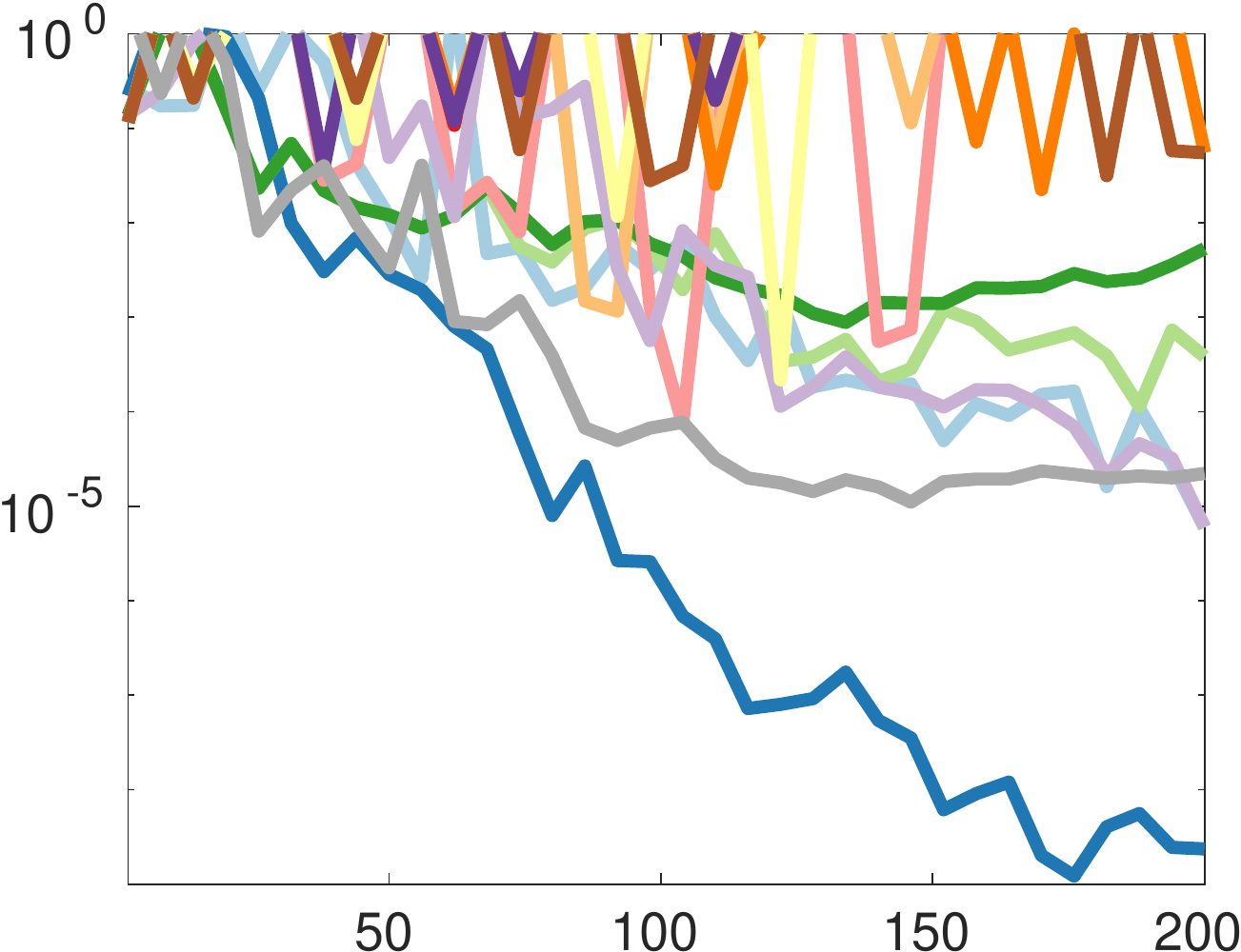}
\caption{$L_2\,\otimes\,L_2$ error between ROM and FOM for the \texttt{ode\_end} model, \texttt{imex1} solver, and \emph{linear} reductors versus reduced order.}
\label{fig:numex2:end1}
\end{subfigure}

~\\

\begin{subfigure}{0.49\textwidth}
\includegraphics[width=\textwidth]{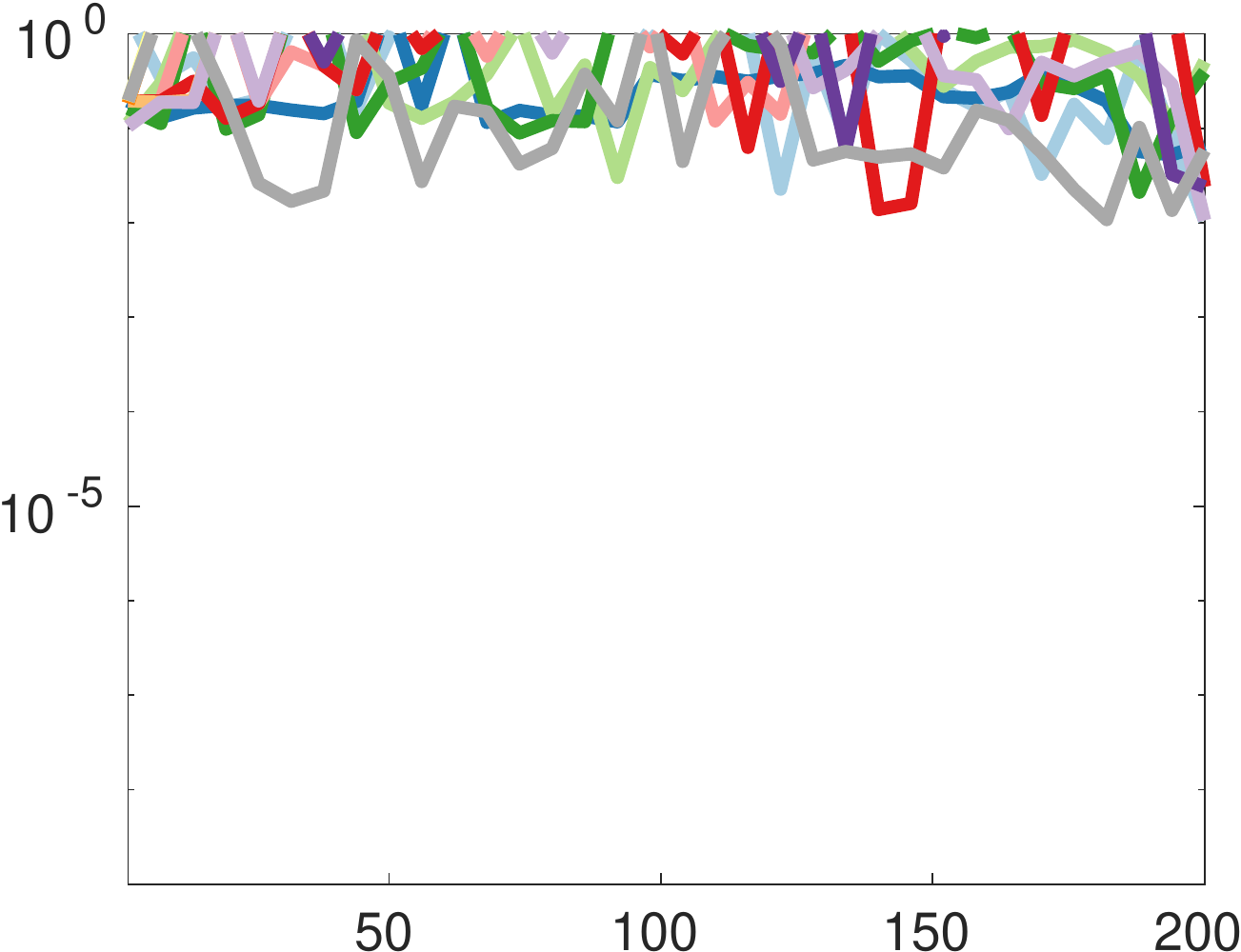}
\caption{$L_2\,\otimes\,L_2$ error between ROM and FOM for the \texttt{ode\_mid} model, \texttt{imex2} solver, and \emph{nonlinear} reductors versus reduced order.}
\label{fig:numex2:mid2}
\end{subfigure}
~~
\begin{subfigure}{0.49\textwidth}
\includegraphics[width=\textwidth]{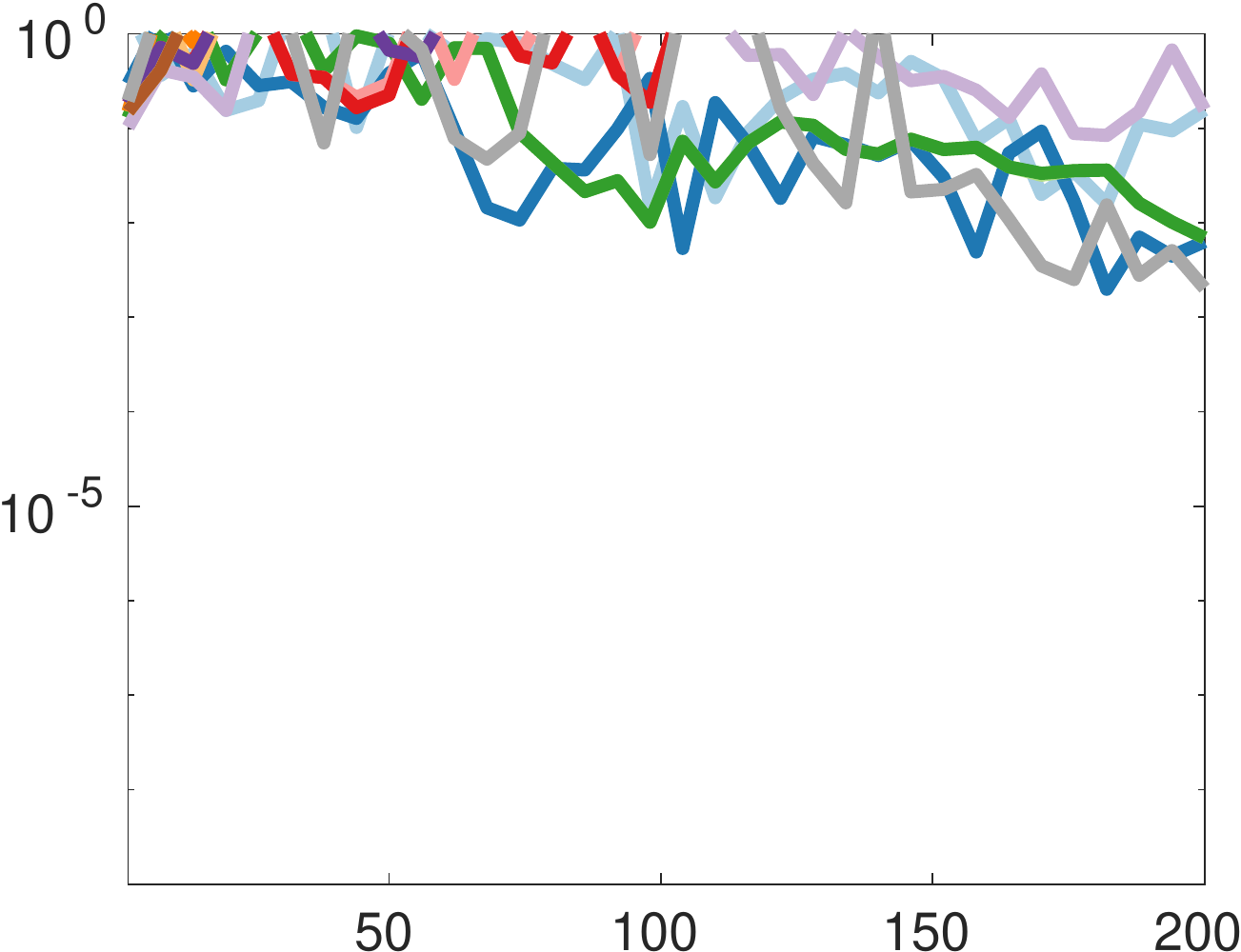}
\caption{$L_2\,\otimes\,L_2$ error between ROM and FOM for the \texttt{ode\_end} model, \texttt{imex2} solver, and \emph{linear} reductors versus reduced order.}
\label{fig:numex2:end2}
\end{subfigure}

\caption{Visualization of the test scenario, and model reduction errors of the tested ROMs for the MORGEN network from \cref{sec:numex2}.}
\label{fig:numex2}
\end{figure}

\pagebreak

\begin{figure}[H]\centering
 \includegraphics[width=.39\textwidth]{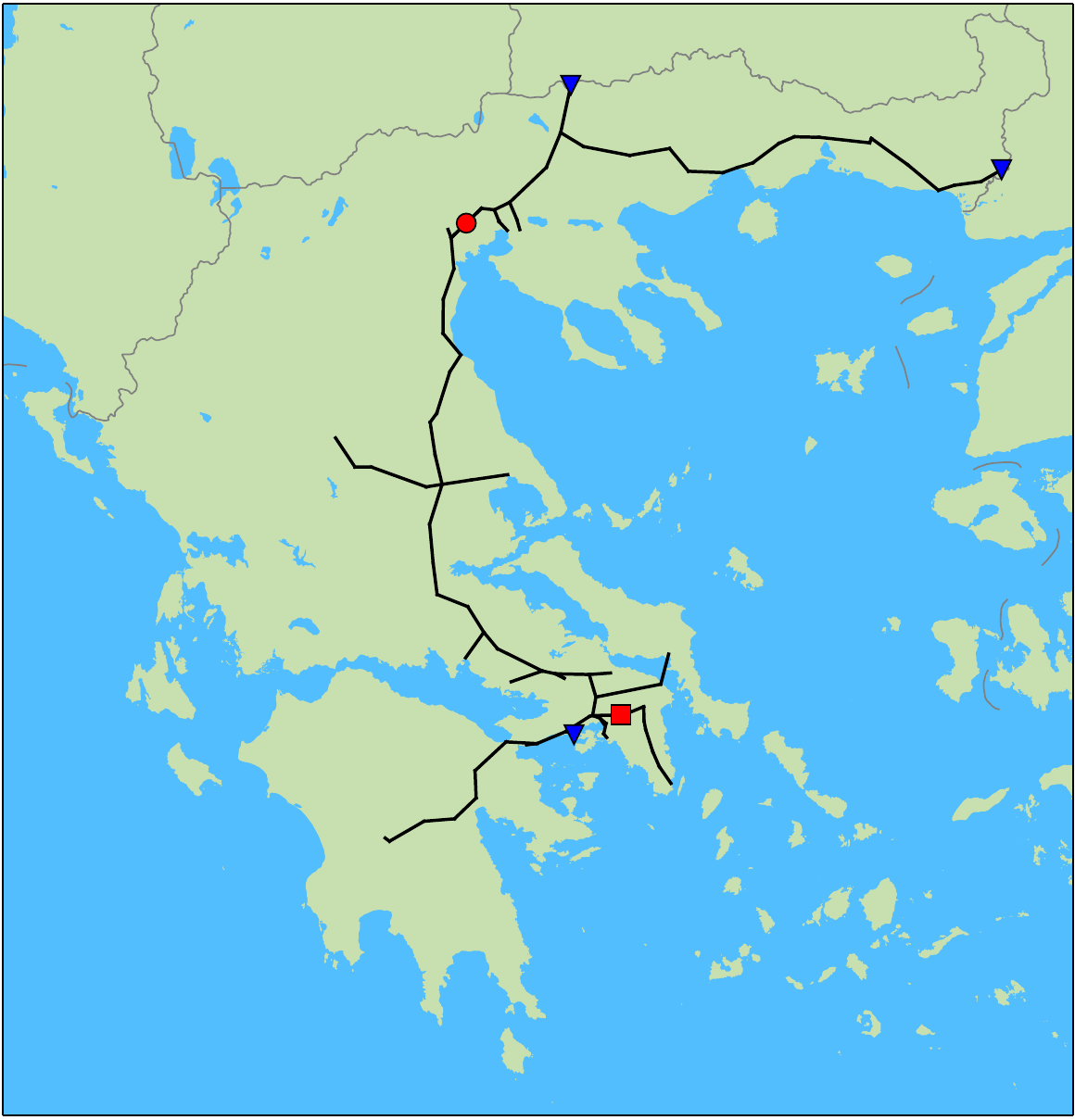}
 \caption{GasLib134v2 network topology modeling a part of the Greek gas network {\small(taken from \url{https://gaslib.zib.de}, licensed under \texttt{CC-BY}.)}.}
 \label{fig:gaslib:topo}
\end{figure}

\subsection{GasLib Network}\label{sec:numex3}
Lastly, a network topology derived from real-life is tested.
The GasLib-134v2 network~\cite{SchABetal17}, modeling a part of the Greek natural gas transport system,
is overlayed on a map of Greece in \cref{fig:gaslib:topo}.
The network has a total length of $1412$km and features a compressor.
A steady-state, used as initial state, is set by supply (and discharge) pressures of $80$bar at supply nodes and the compressor,
and demand mass-fluxes up to $16\frac{\text{kg}}{\text{s}}$ at all demand nodes.

In semi-discrete form, the nonlinear state-space system has $48$ inputs and outputs as well as $2682$ states;
and $30$s time steps are employed.
For testing, a random ($24$h) load profile is generated, by adding samples from a scaled uniform random distribution to the steady-state\footnote{\morgen{} can generate such profiles for all included networks.},
shown in \cref{fig:numex3:scen}, the associated model reduction errors are given in \cref{fig:numex3:mid1}, \cref{fig:numex3:end1}, \cref{fig:numex3:mid2}, and \cref{fig:numex3:end2},
for up to reduced order $250$, while the resulting \textsc{MORscore}s are listed in \cref{tab:numex3:morscore}.

As before, the choice of solver is more relevant than the choice of model.
Challenges in this network, beyond the compressor, are the high number of boundary nodes, which are predominantly demand nodes ($N_d = 45$).

First, we note that only Galerkin methods produce consistently stable ROMs.
Furthermore, in comparison with the previous experiments, the dominant subspace methods perform worse,
and all variants based on reachability and observability Gramians perform relatively better.
The endpoint model seems to be better suited for the tested model reduction methods than the midpoint model.
And as for the other experiments, the first order IMEX solver outmatches the second order IMEX-RK solver.

Considering all experiments, the DMD-Galerkin method performs best in terms of \textsc{MORscore}, accuracy and efficiency, followed by the dominant subspaces methods.
We also note that the purely reachability-based as well as the linear reductors exploiting the port-Hamiltonian structure are the most efficient.
Surprisingly, Galerkin methods perform better than the tested Petrov-Galerkin methods in terms of accuracy and stability,
while in an unstructured, non-parametric, linearized setting all tested Petrov-Galerkin methods would be stability preserving.
Yet, structured balancing methods are explicitly not guaranteed to be stability-preserving \cite{morVanV08,morSanM09}.

\pagebreak 

\begin{figure}[H]
\begin{subfigure}{0.49\textwidth}
\includegraphics[width=\textwidth]{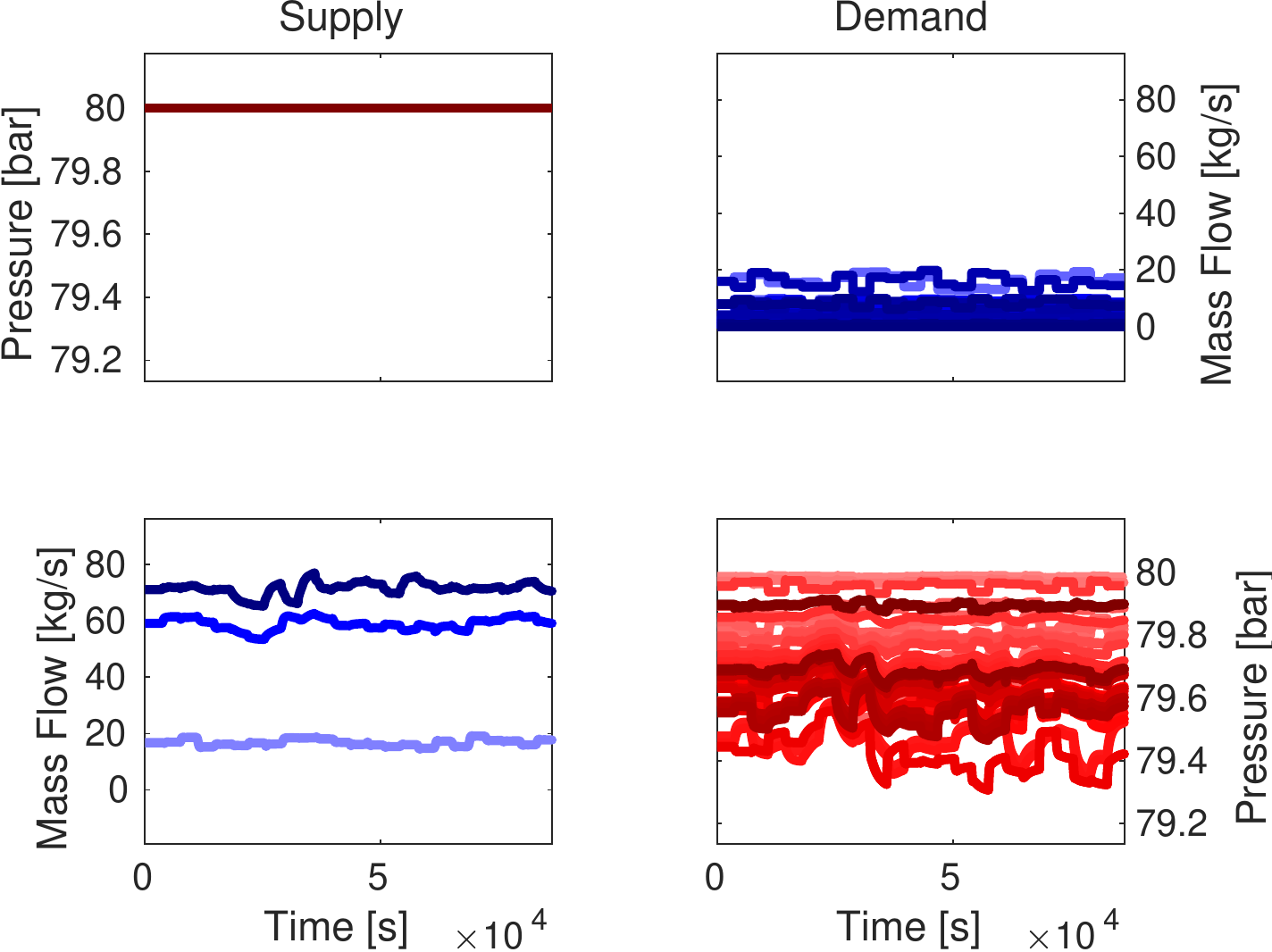}
\caption{Boundary values and quantities of \linebreak interest plots for the test scenario.}
\label{fig:numex3:scen}
\end{subfigure}
~~
\begin{subfigure}{0.49\textwidth}

\vspace{2em}

\includegraphics[width=\textwidth]{plots/legend.pdf}

\vspace{2em}

\caption{Common legend for the error plots \cref{fig:numex3:mid1}, \cref{fig:numex3:end1}, \cref{fig:numex3:mid2}, \cref{fig:numex3:end2}.}
\label{fig:numex3:legend}
\end{subfigure}

\begin{subfigure}{0.49\textwidth}
\includegraphics[width=\textwidth]{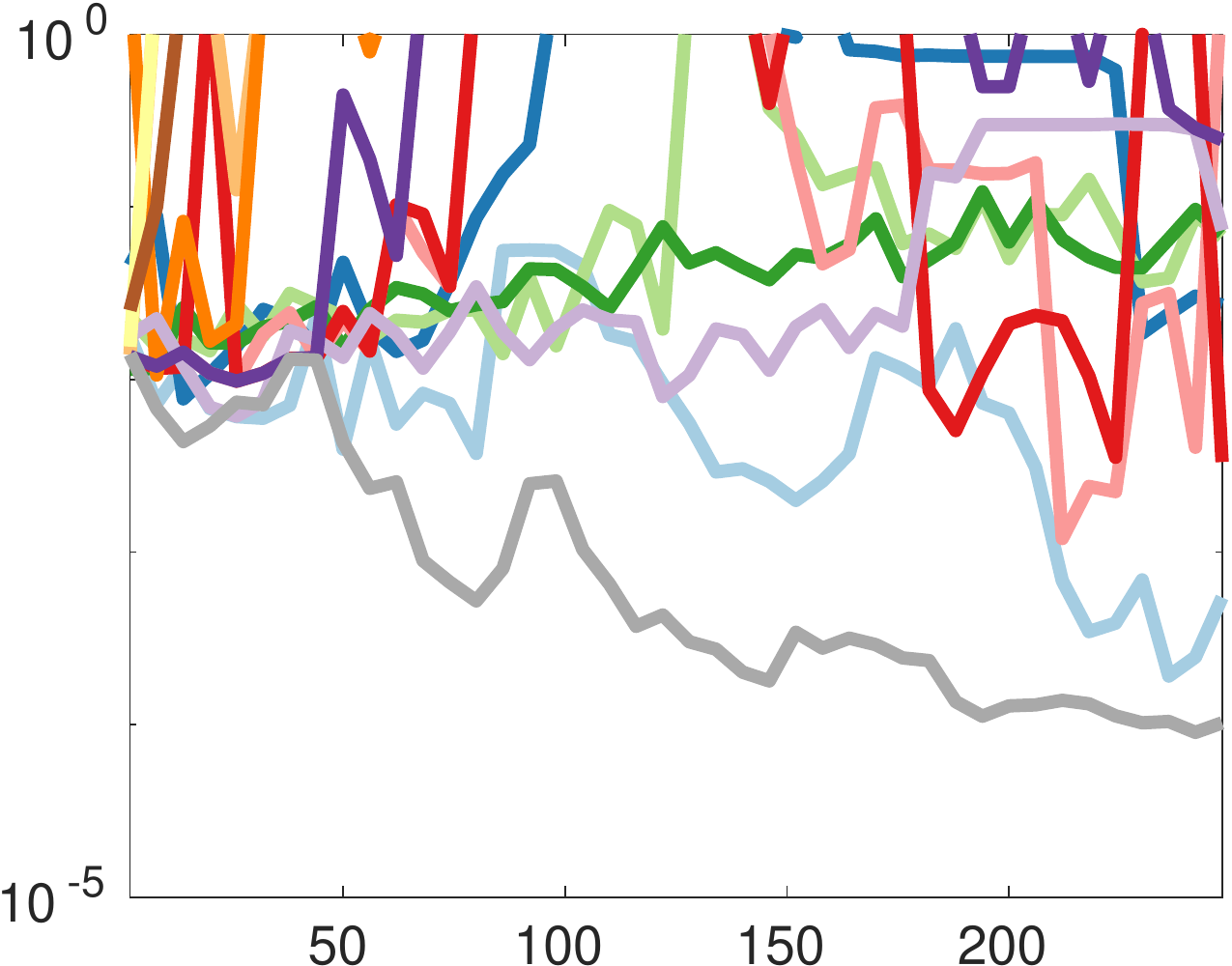}
\caption{$L_2\,\otimes\,L_2$ error between ROM and FOM for the \texttt{ode\_mid} model, \texttt{imex1} solver, and \emph{nonlinear} reductors versus reduced order.}
\label{fig:numex3:mid1}
\end{subfigure}
~~
\begin{subfigure}{0.49\textwidth}
\includegraphics[width=\textwidth]{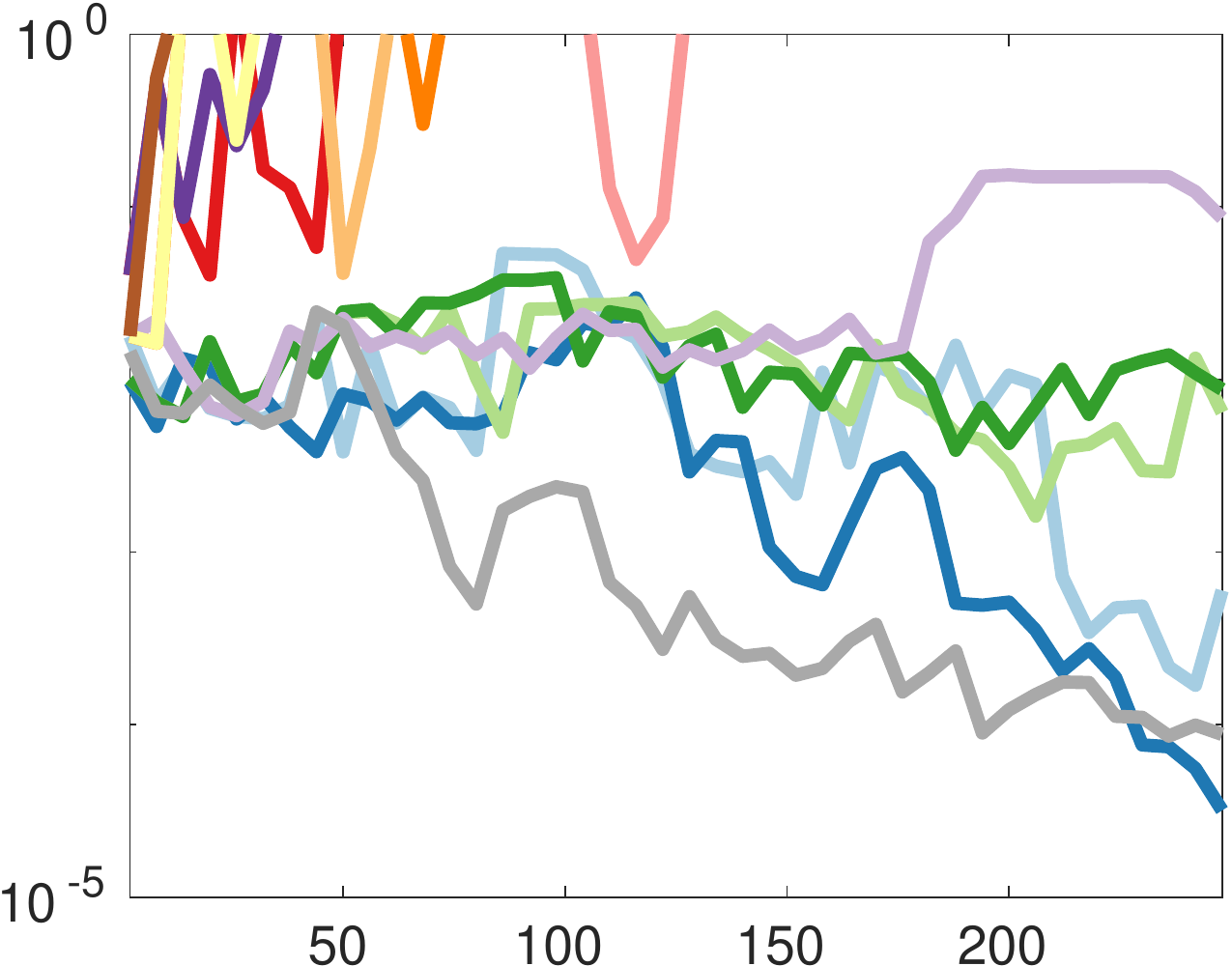}
\caption{$L_2\,\otimes\,L_2$ error between ROM and FOM for the \texttt{ode\_end} model, \texttt{imex1} solver, and \emph{linear} reductors versus reduced order.}
\label{fig:numex3:end1}
\end{subfigure}

~\\

\begin{subfigure}{0.49\textwidth}
\includegraphics[width=\textwidth]{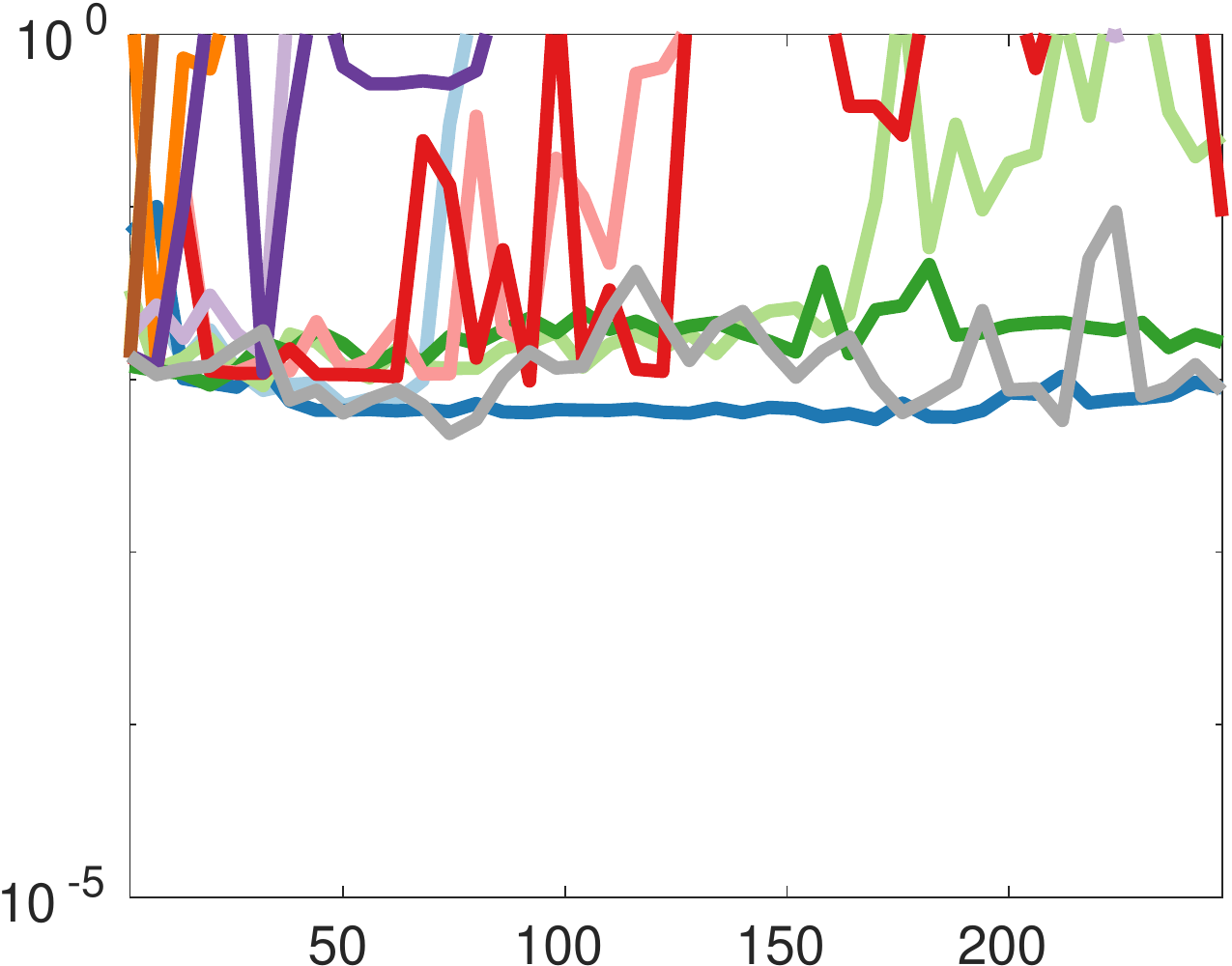}
\caption{$L_2\,\otimes\,L_2$ error between ROM and FOM for the \texttt{ode\_mid} model, \texttt{imex2} solver, and \emph{nonlinear} reductors versus reduced order.}
\label{fig:numex3:mid2}
\end{subfigure}
~~
\begin{subfigure}{0.49\textwidth}
\includegraphics[width=\textwidth]{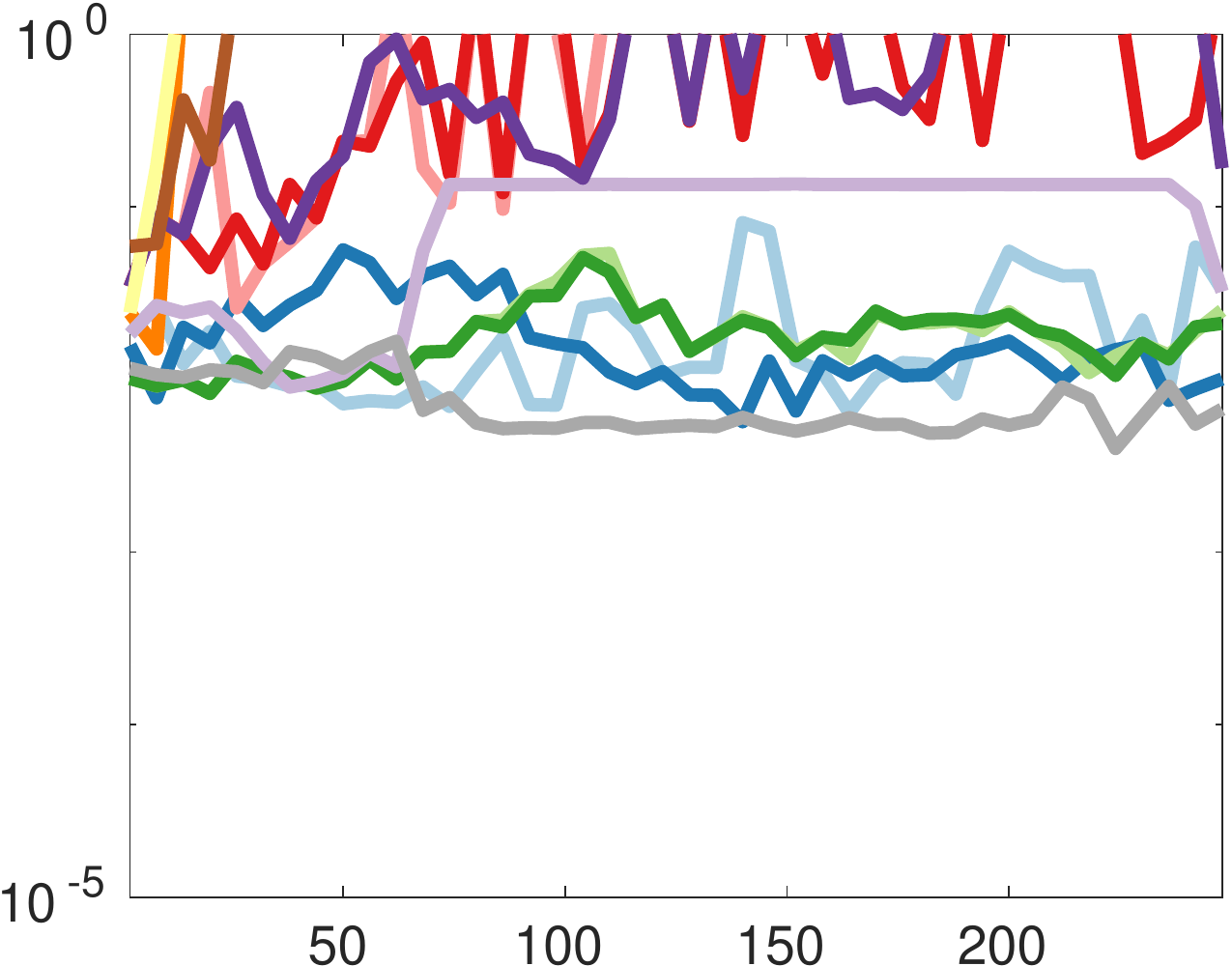}
\caption{$L_2\,\otimes\,L_2$ error between ROM and FOM for the \texttt{ode\_end} model, \texttt{imex2} solver, and \emph{linear} reductors versus reduced order.}
\label{fig:numex3:end2}
\end{subfigure}

\caption{Visualization of the test scenario, and model reduction errors of the tested ROMs for the GasLib-134v2 network from \cref{sec:numex3}.}
\label{fig:numex3}
\end{figure}

\pagebreak

\begin{table}[H]\centering\small
\begin{tabular}{r|c|c|c|c|c|c}
& \shortstack{\footnotesize\texttt{ode\_mid} \\ \footnotesize\texttt{imex\_1}}
& \shortstack{\footnotesize\texttt{ode\_end} \\ \footnotesize\texttt{imex\_1}}
& \shortstack{\footnotesize\texttt{ode\_end} \\ \footnotesize\texttt{imex\_1*}}
& \shortstack{\footnotesize\texttt{ode\_mid} \\ \footnotesize\texttt{imex\_2}}
& \shortstack{\footnotesize\texttt{ode\_end} \\ \footnotesize\texttt{imex\_2}}
& \shortstack{\footnotesize\texttt{ode\_end} \\ \footnotesize\texttt{imex\_2*}} \\
\hline \rowcolor{lightgray}
\texttt{pod\_r~}   & 0.40 & \multicolumn{2}{c|}{0.40} & 0.19 & \multicolumn{2}{c}{0.22} \\
\texttt{eds\_ro}   & 0.52 & 0.54 & 0.51 & 0.07 & 0.09 & 0.04 \\\rowcolor{lightgray}
\texttt{eds\_wx}   & 0.51 & 0.55 & 0.57 & 0.06 & 0.13 & 0.20 \\
\texttt{eds\_wz}   & 0.55 & 0.58 & 0.56 & 0.07 & 0.14 & 0.20 \\ \hdashline \rowcolor{lightgray}
\texttt{bpod\_ro}  & 0.19 & 0.28 & 0.14 & 0.03 & 0.08 & 0.11 \\
\texttt{ebt\_ro}   & 0.05 & 0.06 & 0.03 & 0.07 & 0.12 & 0.17 \\ \rowcolor{lightgray}
\texttt{ebt\_wx}   & 0.17 & 0.11 & 0.10 & 0.00 & 0.00 & 0.04 \\
\texttt{ebt\_wz}   & 0.24 & 0.30 & 0.15 & 0.00 & 0.00 & 0.04 \\ \hdashline \rowcolor{lightgray}
\texttt{gopod\_r~} & 0.40 & \multicolumn{2}{c|}{0.41} & 0.08 & \multicolumn{2}{c}{0.19} \\
\texttt{ebg\_ro}   & 0.05 & 0.11 & 0.02 & 0.05 & 0.11 & 0.15 \\ \rowcolor{lightgray}
\texttt{ebg\_wx}   & 0.08 & 0.23 & 0.14 & 0.00 & 0.00 & 0.00 \\
\texttt{ebg\_wz}   & 0.17 & 0.29 & 0.11 & 0.00 & 0.00 & 0.00 \\ \hdashline \rowcolor{lightgray}
\texttt{dmd\_r~}   & 0.50 & \multicolumn{2}{c|}{0.53} & 0.08 & \multicolumn{2}{c}{0.15}
\end{tabular}
\vskip-1ex
\caption{\textsc{MORscore}s $\mu(150,\epsilon_{\operatorname{mach}(16)})$ in the $L_2 \otimes L_2$ error norm
         for the \linebreak ``Yamal-Europe'' pipeline network from \cref{sec:numex1}; \texttt{*} notes linear reductors.}
\label{tab:numex1:morscore}
\end{table}

\vskip-2.5ex

\begin{table}[H]\centering\small
\begin{tabular}{r|c|c|c|c|c|c}
& \shortstack{\footnotesize\texttt{ode\_mid} \\ \footnotesize\texttt{imex\_1}}
& \shortstack{\footnotesize\texttt{ode\_end} \\ \footnotesize\texttt{imex\_1}}
& \shortstack{\footnotesize\texttt{ode\_end} \\ \footnotesize\texttt{imex\_1*}}
& \shortstack{\footnotesize\texttt{ode\_mid} \\ \footnotesize\texttt{imex\_2}}
& \shortstack{\footnotesize\texttt{ode\_end} \\ \footnotesize\texttt{imex\_2}}
& \shortstack{\footnotesize\texttt{ode\_end} \\ \footnotesize\texttt{imex\_2*}} \\
\hline \rowcolor{lightgray}
\texttt{pod\_r~}   & 0.16 & \multicolumn{2}{c|}{0.16} & 0.02 & \multicolumn{2}{c}{0.04} \\
\texttt{eds\_ro}   & 0.20 & 0.20 & 0.32 & 0.04 & 0.04 & 0.08 \\ \rowcolor{lightgray}
\texttt{eds\_wx}   & 0.09 & 0.10 & 0.15 & 0.01 & 0.03 & 0.06 \\
\texttt{eds\_wz}   & 0.04 & 0.09 & 0.13 & 0.02 & 0.04 & 0.06 \\ \hdashline \rowcolor{lightgray}
\texttt{bpod\_ro}  & 0.13 & 0.13 & 0.03 & 0.00 & 0.02 & 0.00 \\
\texttt{ebt\_ro}   & 0.00 & 0.00 & 0.00 & 0.01 & 0.02 & 0.00 \\ \rowcolor{lightgray}
\texttt{ebt\_wx}   & 0.00 & 0.00 & 0.00 & 0.00 & 0.00 & 0.00 \\
\texttt{ebt\_wz}   & 0.00 & 0.00 & 0.00 & 0.00 & 0.00 & 0.00 \\ \hdashline \rowcolor{lightgray}
\texttt{gopod\_r~} & 0.14 & \multicolumn{2}{c|}{0.14} & 0.01 & \multicolumn{2}{c}{0.01} \\
\texttt{ebg\_ro}   & 0.00 & 0.00 & 0.00 & 0.00 & 0.01 & 0.00 \\ \rowcolor{lightgray}
\texttt{ebg\_wx}   & 0.00 & 0.00 & 0.00 & 0.00 & 0.00 & 0.00 \\
\texttt{ebg\_wz}   & 0.00 & 0.00 & 0.01 & 0.00 & 0.00 & 0.00 \\ \hdashline \rowcolor{lightgray}
\texttt{dmd\_r~}   & 0.16 & \multicolumn{2}{c|}{0.21} & 0.06 & \multicolumn{2}{c}{0.05}
\end{tabular}
\vskip-1ex
\caption{\textsc{MORscore}s $\mu(200,\epsilon_{\operatorname{mach}(16)})$ in the $L_2 \otimes L_2$ error norm
         for the \linebreak ``MORGEN'' test network from \cref{sec:numex2}; \texttt{*} notes linear reductors.}
\label{tab:numex2:morscore}
\end{table}

\vskip-2.5ex

\begin{table}[H]\centering\small
\begin{tabular}{r|c|c|c|c|c|c}
& \shortstack{\footnotesize\texttt{ode\_mid} \\ \footnotesize\texttt{imex\_1}}
& \shortstack{\footnotesize\texttt{ode\_end} \\ \footnotesize\texttt{imex\_1}}
& \shortstack{\footnotesize\texttt{ode\_end} \\ \footnotesize\texttt{imex\_1*}}
& \shortstack{\footnotesize\texttt{ode\_mid} \\ \footnotesize\texttt{imex\_2}}
& \shortstack{\footnotesize\texttt{ode\_end} \\ \footnotesize\texttt{imex\_2}}
& \shortstack{\footnotesize\texttt{ode\_end} \\ \footnotesize\texttt{imex\_2*}} \\
\hline \rowcolor{lightgray}
\texttt{pod\_r~}   & 0.14 & \multicolumn{2}{c|}{0.14} & 0.02 & \multicolumn{2}{c}{0.11} \\
\texttt{eds\_ro}   & 0.04 & 0.07 & 0.16 & 0.13 & 0.12 & 0.11 \\ \rowcolor{lightgray}
\texttt{eds\_wx}   & 0.07 & 0.07 & 0.12 & 0.08 & 0.12 & 0.11 \\
\texttt{eds\_wz}   & 0.09 & 0.08 & 0.12 & 0.11 & 0.12 & 0.11 \\ \hdashline \rowcolor{lightgray}
\texttt{bpod\_ro}  & 0.06 & 0.06 & 0.00 & 0.04 & 0.05 & 0.01 \\
\texttt{ebt\_ro}   & 0.05 & 0.05 & 0.00 & 0.05 & 0.07 & 0.02 \\ \rowcolor{lightgray}
\texttt{ebt\_wx}   & 0.00 & 0.00 & 0.00 & 0.00 & 0.00 & 0.00 \\
\texttt{ebt\_wz}   & 0.00 & 0.00 & 0.00 & 0.00 & 0.00 & 0.00 \\ \hdashline \rowcolor{lightgray}
\texttt{gopod\_r~} & 0.09 & \multicolumn{2}{c|}{0.10} & 0.01 & \multicolumn{2}{c}{0.07} \\
\texttt{ebg\_ro}   & 0.02 & 0.03 & 0.00 & 0.00 & 0.02 & 0.01 \\ \rowcolor{lightgray}
\texttt{ebg\_wx}   & 0.00 & 0.00 & 0.00 & 0.00 & 0.00 & 0.00 \\
\texttt{ebg\_wz}   & 0.00 & 0.00 & 0.00 & 0.00 & 0.00 & 0.00 \\ \hdashline \rowcolor{lightgray}
\texttt{dmd\_r~}   & 0.20 & \multicolumn{2}{c|}{0.19} & 0.12 & \multicolumn{2}{c}{0.13}
\end{tabular}
\vskip-1ex
\caption{\textsc{MORscore}s $\mu(250,\epsilon_{\operatorname{mach}(16)})$ in the $L_2 \otimes L_2$ error norm
         for the \linebreak ``GasLib-134v2'' benchmark network from \cref{sec:numex3}; \texttt{*} notes linear reductors.}
\label{tab:numex3:morscore}
\end{table}

\pagebreak

With regard to the computational complexity of the offline and online phase, we reiterate,
that due to the absence of hyper-reduction, the online runtimes are not competitive (see \cref{sec:hypred}),
thus, we focus on the offline phase.
Yet, due to the practical reducibility of the state-space dimension by more than one order of magnitude
in the numerical experiments using the first order IMEX solver, a considerable speed-up is to be expected.

For the tested data-driven (time-domain) model reduction methods, the number of vector-field evaluations,
or relatedly, the number of simulated trajectories measures the complexity,
as these constitute their principal fraction.
The empirical reachability Gramian requires $N_s + N_d$ (number of ports) trajectories.
The empirical observability Gramian requires $N_p + N_q$ (number of states) trajectories.
The empirical cross Gramian requires $N_s + N_d + N_p + N_q$ trajectories,
and the linear empirical cross Gramian requires $2(N_s + N_d)$ trajectories.
For the tested reductors this amounts to $N_s + N_d$ trajectories for the POD, goal-oriented POD, and DMD-Galerkin method,
while the port-Hamiltonian variants of the dominant subspaces, balanced POD, balanced truncation and balanced gains methods need $2(N_s + N_d)$ trajectories,
and their non-port-Hamiltonian variants require $N_s + N_d + N_p + N_q$ trajectories.

These predicted complexities are reflected in the offline runtimes, when computed sequentially.
As the computation of trajectories is embarrassingly parallel, all trajectories are however computable simultaneously. 
Nonetheless, the complexities of the reachability-Gramian-only and port-Hamiltonian reductors are independent from the discretization,
and thus most relevant for large-scale gas networks.

\section{Outlook}\label{sec:outlook}
The next stage in the development of \morgen{} involves testing larger real-life networks,
such as the deliverable of the \emph{SciGRID\_gas}\footnote{\url{https://www.gas.scigrid.de}} (Open Source Model of the European Gas Network) project.
Yet, various further venues of linked modeling and model reduction questions are still not covered by \morgen{}.

In terms of model reduction, an interesting issue are intraday switchable valves,
which change the topology of the gas network graph and likely require to extend the utilized model reduction methods towards these switched systems.

Another interesting question in need of further investigation is the minimal time horizon of the training phase.
A lower bound is the time step times the longest path from a supply to a demand node,
but this is likely not sufficient.

Besides, an additional hyper-reduction module (\cref{sec:hypred}) post-processing the reduced order models,
a decoupler module pre-processing (DAE) models as described in \cite{morBanGB20,morBanAGetal20} is projected.

Also as detailed in \cref{sec:pmor}, the pipe roughness is a relevant parameter for (transient) simulations~\cite{SunZ19},
yet the entailing high-dimensional parameter space, due to the locally differing roughness and attrition rates, would have to be treated, too.
This in turn would raise the question for combined state and parameter reduction~\cite{morHim17},
and is postponed to future investigations.

Finally, using a tunable efficiency factor~\cite{Osi87,OsiC10,PfeFGetal15} that scales the model's friction term,
can be used to tune the models to match real data.

\section{Conclusions}\label{sec:conclusion}
In more than half a century of computational transient gas network simulation research and industrial use,
\morgen{} seems to be the first open-source platform (modularly) covering modeling, simulation, and model order reduction of gas (and energy) networks.
The target applications for \morgen{} are finding the best model reduction method or best reduced order model for a network by heuristic comparison,
as well as comparing model-solver-reductor simulation ensembles.

From a mathematical point of view, a next generation gas network simulation stack should consist of
a (endpoint) port-Hamiltonian model, a (first-order) IMEX solver, and a block-diagonal Galerkin projection reductor,
which is confirmed by the numerical results.

This results in the following heuristically determined but theoretically explainable recommended combination:
The endpoint model together with the first-order IMEX solver, and a Galerkin reductor,
specifically a structured dominant subspaces or structured DMD-Galerkin,
exhibiting the highest \textsc{MORscores} in the numerical experiments.
The performance of structured balanced truncation and the related structured balanced gains
may be improved in terms of stability(-preservation) either by a variant of the technique~\cite{morLuMM16},
a stabilizing inner product \cite{morSerLGetal12}, an (energy-)stable inner product~\cite{morKalBAetal14},
or an optimization-based post-processing as in~\cite{morBenHM18}.

Lastly, we invite researchers, engineers and users to provide their reductors, solvers, networks and scenarios for expansion and testing with \morgen{} for a broader view of this comparison.

\section*{Availability of Data and Materials}
\begin{minipage}{\linewidth} 
  \begin{framed}
    The Matlab language source code of the \morgen{} platform~1.0 is licensed under \textsc{BSD-2-Clause License},
    can be obtained from:
    \begin{center}
      \href{http://doi.org/10.5281/zenodo.4288509}{\texttt{doi:10.5281/zenodo.4288509}}
    \end{center}
    and is authored by: \textsc{C.~Himpe} and \textsc{S.~Grundel}.
  \end{framed}
\end{minipage}

\section*{Funding}
This work is supported by the German Federal Ministry for Economic Affairs and Energy,
in the joint project:
``\textbf{MathEnergy} -- Mathematical Key Technologies for Evolving Energy Grids'',
sub-project: Model Order Reduction (Grant number: 0324019\textbf{B}),

and by the National Science Foundation under Grant number: DMS-1439786
while the author was in residence at the Institute for Computational and Experimental Research in Mathematics in Providence, RI,
during the ``\textbf{Model and dimension reduction in uncertain and dynamic systems}'' program.

\section*{Appendix}\label{sec:appendix}
\subsection*{Model Fact Sheet}

\begin{tabular}{rl}
\textbf{Basis:} & Euler equations for cylindrical pipes \\[1ex]
\textbf{Assumptions} \\
Long pipes: & One spatial dimension \\
Kinetic term: & Removed due to slow subsonic velocities \\
Boundary values: & Low-frequency (sum of) step functions \\[1ex]
\textbf{Simplifications} \\
Temperature: & Isothermal {\small(temperature is parameter)} \\
Gas composition: & Constant global {\small(specific gas constant is parameter)} \\
Compressibility: & Constant global {\small(derived from steady-state)} \\
Compressors: & Affine / Additive \\[1ex]
\textbf{Modularization} \\
Friction:         & {\small Hofer \orelse Nikuradse \orelse Altshul \orelse Schifrinson \orelse PMT1025 \orelse IGT} \\
Compressibility:  & Ideal \orelse DVGW-G-2000 \orelse AGA88 \orelse Papay \\[1ex]
\textbf{Discretization} \\
Spatial:  & 1st order upwind finite differences \\
Temporal: & RK-4 \orelse IMEX-1 \orelse IMEX-RK-2 \orelse Rosenbrock-2
\end{tabular}

\section*{Abbreviations}
\begin{tabular}{rl}
CFL & Courant-Friedrichs-Levy \\
CSV & Comma Separated Value \\
DAE & Differential Algebraic Equation \\
DEIM & Discrete Empirical Interpolation Method \\
DIRK & Diagonally Implicit RK \\
DMD & Dynamic Mode Decomposition \\
DMDc & DMD with Control \\
DSPMR & Dominant Subspace Projection Model Reduction \\
EVD & Eigenvalue Decomposition \\
IMEX & Implicit-Explicit \\
MOR & Model Order Reduction \\
ODE & Ordinary Differential Equation \\
PDAE & Partial DAE \\
PDE & Partial Differential Equation \\
PDIRK & Passive DIRK \\
POD & Proper Orthogonal Decomposition \\
RK & Runge-Kutta \\
ROM & Reduced Order Model \\
SDIRK & Singly DIRK \\
SPMOR & Structure-Preserving Model Order Reduction \\
SRSN & System with Repeated Scalar Nonlinearities \\
SSP & Strong Stability Preserving \\
SVD & Singular Value Decomposition \\
tEVD & truncated EVD \\
tSVD & truncated SVD
\end{tabular}

\section*{Competing Interest}
The authors declare that they have no competing interests.



\bibliographystyle{plainurl}
\bibliography{mor,csc,software,tmp}

\end{document}